\newtheorem{theorem}{Theorem}[section]
\newtheorem{lemma}[theorem]{Lemma}
\newtheorem{proposition}[theorem]{Proposition}
\newtheorem{corollary}[theorem]{Corollary}
\theoremstyle{definition}
\newtheorem{definition}[theorem]{Definition}
\newtheorem{remark}[theorem]{Remark}
\newcommand{\E}{\mathbb{E}}
\newcommand{\N}{\mathbb{N}}
\newcommand{\Z}{\mathbb{Z}}
\renewcommand{\O}{\mathcal{O}}
\newcommand{\R}{\mathcal{R}}
\renewcommand{\L}{\mathrm{L}}
\newcommand{\op}{\mathrm{op}}
\renewcommand{\d}{\mathrm{d}}
\newcommand{\Ani}{\mathsf{Ani}}
\newcommand{\Fun}{\mathsf{Fun}}
\newcommand{\D}{\mathsf{D}}
\newcommand{\Ring}{\mathsf{Ring}}
\newcommand{\Perf}{\mathsf{Perf}}
\newcommand{\cProj}{\mathsf{cProj}}
\newcommand{\Alg}{\mathsf{Alg}}
\newcommand{\fPoly}{\mathsf{fPoly}}
\newcommand{\sslash}{/\!\!/}
\DeclareMathOperator{\Map}{Map}
\DeclareMathOperator{\map}{map}
\DeclareMathOperator{\Hom}{Hom}
\DeclareMathOperator{\LSym}{LSym}
\DeclareMathOperator{\fib}{fib}
\DeclareMathOperator{\cofib}{cofib}
\DeclareMathOperator{\K}{K}
\renewcommand{\th}[1]{#1\textsuperscript{th}}
\begin{document}

\title{Slicing criterion for ind-smooth ring maps}
\author{Longke Tang}

\begin{abstract}
    We show that ind-smoothness of flat ring maps can be tested on constructible stratifications, even for maps of non-Noetherian rings. We prove this by generalizing ind-smoothness and ind-lci to a sequence of conditions on animated ring maps called ind-$d$-smoothness, and showing all at once that they can be tested on constructible stratifications. 
\end{abstract}

\maketitle

\section{Introduction}

\subsection{Results}
The Artin approximation theorem \cite[Theorem 1.10]{artin-approximation} is a very useful technical result in commutative algebra with many important consequences, including Artin's representability theorem and the proper base change theorem for \'etale cohomology. The most memorable generalization of the Artin approximation theorem, proved by Popescu, is as follows: 

\begin{theorem}[{\cite[Theorem 1.8]{popescu-approximation}}; {\cite[\texttt{07GC}]{stacks}}]\label{popescu}
    A map $R\to A$ of Noetherian rings is ind-smooth if and only if it is regular. 
\end{theorem}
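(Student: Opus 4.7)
The plan is to separate the two directions; one is formal and one is the deep content of Popescu's theorem.

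For the forward implication, suppose $A = \colim_i A_i$ is a filtered colimit of smooth $R$-algebras. Flatness is stable under filtered colimits, so $R\to A$ is flat. For fibers, fix a prime $\mathfrak{p}\subset R$ and a finite field extension $L/k(\mathfrak{p})$; then $A\otimes_R L = \colim_i (A_i\otimes_R L)$ is a filtered colimit of smooth $L$-algebras. Each term is regular, and since $A\otimes_R L$ is Noetherian (as $A$ is Noetherian), the colimit is again regular. Hence $R\to A$ is regular.

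The reverse implication is the hard part, and I would follow the N\'eron--Popescu desingularization strategy. First, reduce the problem to the following approximation statement: every finitely generated $R$-subalgebra $B\subseteq A$ admits a factorization $B\to C\to A$ with $C$ smooth over $R$. Given this, writing $A$ as the union of its finitely generated $R$-subalgebras and combining factorizations presents $A$ as a filtered colimit of smooth $R$-algebras. To construct such a $C$, present $B = R[x_1,\ldots,x_n]/I$ and measure non-smoothness via the Jacobian ideal $H_{B/R}\subseteq B$, whose vanishing locus is the non-smooth locus of $\operatorname{Spec} B\to\operatorname{Spec} R$. The goal is to use regularity of $R\to A$ to produce a modification $B\to B'\to A$ with $B'$ finitely generated over $R$ and $H_{B'/R}\cdot A$ strictly larger than $H_{B/R}\cdot A$. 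Iterating yields a finitely generated $R$-subalgebra $C$ of $A$ whose Jacobian ideal generates the unit ideal in $A$; a bootstrapping argument with standard smooth algebras then upgrades this to actual smoothness of $C$ over $R$.

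The principal obstacle is constructing a single desingularization step in positive and mixed characteristic. In characteristic zero, geometric regularity coincides with regularity over the fibers and one can essentially invoke generic smoothness after a Noether-normalization reduction. In characteristic $p$, imperfect residue fields and Frobenius-related phenomena create subtle obstructions, traditionally handled by passing to completions via the Cohen structure theorem and smoothening level by level, with a delicate general position argument producing the required relations. I expect this characteristic-$p$ smoothening step to account for essentially all of the difficulty, and it is where the flexibility of working with constructible stratifications, as developed in this paper, should offer a substantial simplification.
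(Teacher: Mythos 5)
The paper does not prove Theorem \ref{popescu}; it cites Popescu's theorem as a known black box (with references to Popescu's original paper and the Stacks Project) and invokes it once, in the proof of Corollary \ref{corollary-cp}. There is therefore no internal proof to compare against, and your outline is best read as a summary of the standard N\'eron--Popescu desingularization strategy found in those references, which it does capture at a high level.

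Two comments are nevertheless in order. In your forward direction, the step ``each term is regular, and since $A\otimes_R L$ is Noetherian, the colimit is again regular'' is not automatic: a Noetherian filtered colimit of regular rings need not be regular without some control on the transition maps. The standard argument instead observes that $\L_{A/R}$ is a filtered colimit of finite projective $A$-modules placed in degree $0$, hence a flat $A$-module concentrated in degree $0$, and then invokes the Andr\'e--Quillen characterization of regular maps between Noetherian rings. Second, your closing remark misreads the paper's scope. The constructible-stratification result (Theorem \ref{main-thm-classical}) is not a simplification of the characteristic-$p$ smoothening step inside Popescu's theorem; it is an analog, generalized to non-Noetherian and animated settings, of one preliminary reduction step in the Stacks Project exposition (\cite[\texttt{07F1}]{stacks}). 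The paper explicitly disclaims having any non-Noetherian replacement for Popescu's theorem itself, and uses Popescu's theorem as an input rather than re-deriving it.
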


While the statement of this theorem is clean, its proof is extremely technical. An excellent exposition of the proof is \cite[\texttt{07BW}]{stacks}, where one first reduces to the case where $R$ is a field and then treats it by very concrete constructions. 

Recent developments in $p$-adic geometry have led us to consider non-Noetherian rings such as perfectoid rings and general valuation rings. For example, Bouis \cite{bouis-smooth} has proved a comparison theorem between the \'etale cohomology and the syntomic cohomology of certain large algebras over perfectoid rings, by showing that valuation rings over perfectoid rings have a regularity property called $F$-smoothness, introduced in \cite[Definition 4.1]{bhatt-mathew-syntomic}. Therefore, it is natural to consider possible non-Noetherian generalizations of Theorem \ref{popescu}. Unfortunately, we currently have no idea what a non-Noetherian version of Theorem \ref{popescu} will be, if there is one. Nevertheless, we are able to prove the following theorem, which is an analog of the reduction step carried out in \cite[\texttt{07F1}]{stacks}: 

\begin{theorem}[cf. {\cite[\texttt{07F5}]{stacks}}]\label{main-thm-classical}
    Let $R$ be a ring, $\Lambda$ be a flat $R$-algebra, and $r$ be an element of $R$. If $\Lambda[1/r]$ is ind-smooth over $R[1/r]$ and $\Lambda/r\Lambda$ is ind-smooth over $R/r$, then $\Lambda$ is ind-smooth over $R$. 
\end{theorem}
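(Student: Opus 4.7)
The plan is to deduce Theorem~\ref{main-thm-classical} as a special case of a general slicing criterion for ind-$d$-smoothness of animated ring maps, which will be the paper's main technical content. My first step is to reframe the hypotheses animatedly. Since $\Lambda$ is flat over $R$, the classical quotient and localization coincide with derived base change: $\Lambda/r\Lambda = \Lambda \otimes_R^\L R/r$ and $\Lambda[1/r] = \Lambda \otimes_R^\L R[1/r]$. Moreover, for a flat map of ordinary rings, classical ind-smoothness should agree with the corresponding animated notion (the relevant case of ind-$d$-smoothness, presumably $d=0$). Hence Theorem~\ref{main-thm-classical} will follow once one knows that if $R \to \Lambda$ is an animated ring map whose derived base changes along $R \to R[1/r]$ and $R \to R/r$ are ind-$d$-smooth, then so is $R \to \Lambda$.

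The second step is to prove this general animated slicing criterion. My approach is to characterize ind-$d$-smoothness as admitting filtered approximations by finitely presented ``$d$-smooth'' maps, a derived flavor of smoothness that allows lifting along the relevant square-zero extensions. Given smooth approximations $A'$ of $\Lambda[1/r]$ over $R[1/r]$ and $\bar A$ of $\Lambda/r$ over $R/r$, I would build an approximation $A$ of $\Lambda$ over $R$ by gluing $A'$ and $\bar A$, using the flatness of $\Lambda$ to ensure that the two pieces of local data are genuinely compatible and together determine $\Lambda$, up to further refinement inside the ind-system.

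The main obstacle is the gluing-and-lifting step itself. Classically, to turn a smooth $R/r$-algebra $\bar A$ into a smooth $R$-algebra one lifts stepwise along the thickenings $R/r^n$ via deformation theory and assembles using completeness; but here $R$ need not be Noetherian or $r$-adically complete, so those tools are unavailable. The ind-$d$-smooth hierarchy is designed precisely to sidestep this issue: at sufficiently large $d$, lifting obstructions vanish for formal reasons, so one can start from a crude ind-$d$-smooth approximation at high $d$ and descend by induction, trading derived freedom for geometric content until one arrives at the classical ind-smooth conclusion. This layered strategy is what should make the animated statement tractable, whereas the classical statement tackled directly remains technically involved.
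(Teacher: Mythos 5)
Your high-level plan — reframe hypotheses using flatness to identify classical quotients with animated ones, prove a general slicing criterion for ind-$d$-smoothness, and descend by induction on $d$ — does match the paper's architecture. However, there is a genuine gap in the reduction from the animated criterion to Theorem~\ref{main-thm-classical}, and you have misstated what the animated criterion actually asserts. The paper's Theorem~\ref{main-thm} requires ind-$d$-smoothness of $\Lambda/r\Lambda$ over $R/r$ for \emph{every} $r\in S$; when $S$ is the multiplicative monoid generated by a single element $r$, this means ind-$d$-smoothness modulo $r^n$ for \emph{all} $n$, not just modulo $r$ itself. Your proposed criterion requires only the $n=1$ case, so as stated it is strictly stronger than Theorem~\ref{main-thm}. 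The bridge between the two is a separate lifting result (the content of \S\ref{lifting-problem}, cf.\ \cite[\texttt{07CJ}]{stacks}): one must show that ind-smoothness of $R/r\to\Lambda/r\Lambda$ implies ind-smoothness of $R/r^n\to\Lambda/r^n\Lambda$. This lifting step is non-trivial — it is a deformation-theoretic argument along square-zero extensions whose fiber is $d$-connective, exploiting that the relative cotangent complex of a $d$-smooth algebra is perfect in $\Perf_{[0,d]}$ so that the obstruction class lands in a sufficiently connective mapping anima — and you neither prove it nor acknowledge that it is needed. Your third paragraph shows awareness that stepwise lifting along thickenings is the classical route, but you treat this as an obstacle to circumvent rather than as a precise lemma that must still be established (without any completeness hypothesis) before Theorem~\ref{main-thm} applies.

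Beyond this, your sketch of the slicing criterion itself is too impressionistic to evaluate: the actual proof hinges on the blowup (Rees) algebra construction to lift $\bar R$-algebras to $R$-algebras compatible with the localization at $r$ (Lemma~\ref{lifting-lemma}), on writing $c$-connective $(c+2)$-smooth maps as a single quotient $R\sslash M$ (Theorem~\ref{strong-quotient}), and on a desingularization argument (Lemmas~\ref{desing-lemma-0}, \ref{desing-lemma-d}) where maps $s,t$ with $s\circ t = p$ on the cotangent complex are used to kill the singularity after a carefully chosen change of variables $x\mapsto p^2(v\cdot h(0)+pw)$. None of these mechanisms appear in your proposal, and "gluing $A'$ and $\bar A$ using flatness" does not by itself produce the required finitely presented $R$-algebra; the construction must actively introduce new variables and relations that collapse correctly on each stratum.
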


\begin{corollary}
    Let $R$ be a ring and $\Lambda$ be a flat $R$-algebra. Let $r_1,\ldots,r_n$ be elements of $R$ and set $r_{n+1}=1$. If $(\Lambda/(r_1,\ldots,r_i)\Lambda)[1/r_{i+1}]$ is ind-smooth over $(R/(r_1,\ldots,r_i))[1/r_{i+1}]$ for $i=0,1,\ldots,n$, then $\Lambda$ is ind-smooth over $R$. 
\end{corollary}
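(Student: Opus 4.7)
The plan is a straightforward downward induction on $i$, applying Theorem \ref{main-thm-classical} at each step. Concretely, I would prove by reverse induction on $i\in\{n, n-1, \ldots, 0\}$ the statement
\[
P(i):\ \Lambda/(r_1,\ldots,r_i)\Lambda \text{ is ind-smooth over } R/(r_1,\ldots,r_i).
\]
Taking $i=0$ then yields the desired conclusion. The base case $i=n$ is immediate from the hypothesis for $i=n$, since $r_{n+1}=1$ forces $(\Lambda/(r_1,\ldots,r_n)\Lambda)[1/r_{n+1}] = \Lambda/(r_1,\ldots,r_n)\Lambda$ and similarly for $R$.

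For the inductive step, assume $P(i+1)$ holds; I want to deduce $P(i)$. Set $R' := R/(r_1,\ldots,r_i)$, $\Lambda' := \Lambda/(r_1,\ldots,r_i)\Lambda$, and let $r'$ be the image of $r_{i+1}$ in $R'$. I would apply Theorem \ref{main-thm-classical} to the triple $(R', \Lambda', r')$. The three hypotheses to check are:
\begin{enumerate}
\item $\Lambda'$ is flat over $R'$: this follows because $\Lambda' = \Lambda \otimes_R R'$ and flatness is preserved under base change.
\item $\Lambda'[1/r']$ is ind-smooth over $R'[1/r']$: this is exactly the hypothesis of the corollary for index $i$.
\item $\Lambda'/r'\Lambda' = \Lambda/(r_1,\ldots,r_{i+1})\Lambda$ is ind-smooth over $R'/r' = R/(r_1,\ldots,r_{i+1})$: this is exactly $P(i+1)$.
\end{enumerate}
Theorem \ref{main-thm-classical} then gives $P(i)$, completing the induction.

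There is not really a substantive obstacle here; the argument is a mechanical iteration of the main theorem, and the only small point to be careful about is the preservation of flatness through the successive quotients, which is automatic from the fact that $\Lambda$ is flat over $R$. The content of the corollary lies entirely in Theorem \ref{main-thm-classical}, and this statement merely packages the one-step result into an $n$-step stratification statement.
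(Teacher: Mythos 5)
Your proof is correct, and it is the natural argument; the paper gives no explicit proof of this corollary because the downward induction on $i$, applying Theorem \ref{main-thm-classical} at each step with base-change of flatness, is exactly what is intended.
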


The following consequence of Theorem \ref{main-thm-classical} seems unknown in the literature, although Gabber, in his talk \cite{gabber-talk} on \cite{gabber-zavyalov}, has announced the special case of Theorem \ref{main-thm-classical} with $R/r=\Lambda/r\Lambda$ that suffices to imply it: 

\begin{corollary}\label{corollary-cp}
    Let $\O$ be a valuation ring of rank $1$ with fraction field of characteristic $0$, and let $\pi\in\O$ be a pseudo-uniformizer. Let $R$ be a finitely presented $\O$-algebra. Then the map from $R$ to its $\pi$-completion is ind-smooth. 
\end{corollary}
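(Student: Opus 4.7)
The plan is to apply Theorem~\ref{main-thm-classical} with $\Lambda = \hat{R}$, the $\pi$-adic completion of $R$, and $r = \pi$. One then has to verify three items: flatness of $\hat{R}$ over $R$, ind-smoothness of $\hat{R}/\pi\hat{R}$ over $R/\pi$, and ind-smoothness of $\hat{R}[1/\pi]$ over $R[1/\pi]$. A preliminary reduction to the case where $R$ is $\O$-flat (by quotienting out $\pi$-power torsion, using that a rank~$1$ valuation ring is Bézout and hence coherent) makes the first two items largely formal: writing $R = \O[x_1,\ldots,x_n]/I$ with $I$ finitely generated, one gets $\hat{R} \cong \O\langle x_1,\ldots,x_n\rangle \otimes_{\O[x_1,\ldots,x_n]} R$, so flatness of $\hat{R}$ over $R$ descends by base change from the classical flatness of the restricted-power-series ring over the polynomial ring; and the identification $\hat{R}/\pi\hat{R} \cong R/\pi$ makes the special-fiber map the identity, which is trivially ind-smooth.

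The heart of the argument is therefore the generic fiber. Both rings are Noetherian: $R[1/\pi]$ because it is finitely presented over the field $K = \O[1/\pi]$ of characteristic~$0$, and $\hat{R}[1/\pi]$ because it is a quotient of the Tate algebra $K\langle x_1,\ldots,x_n\rangle$, whose Noetherianity holds for any rank~$1$ valuation base by the classical theorem of Bosch--G\"untzer--Remmert. I would then invoke Popescu's theorem (Theorem~\ref{popescu}) and check that the map $R[1/\pi] \to \hat{R}[1/\pi]$ is regular. Flatness follows from the previous step by inverting $\pi$. For geometric regularity of the fibers, the characteristic~$0$ hypothesis is exactly what is needed: every residue field of $R[1/\pi]$ is of characteristic~$0$, so geometric regularity reduces to ordinary regularity of fibers. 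The latter one checks by reducing via the presentation to the inclusion $K[x_1,\ldots,x_n] \hookrightarrow K\langle x_1,\ldots,x_n\rangle$, using that the Tate algebra is a regular ring and that a flat local map between regular Noetherian local rings has regular closed fiber.

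The main obstacle is not the logical skeleton, which is a clean application of Theorem~\ref{main-thm-classical} followed by Popescu, but rather marshalling the various rigid-geometric inputs in the non-discretely-valued setting: Noetherianity and regularity of Tate algebras, and flatness of the polynomial-to-restricted-power-series map, all over a possibly non-Noetherian valuation ring~$\O$. These are known, but each requires care when $\O$ is not a DVR. It is worth emphasizing that the characteristic~$0$ hypothesis plays no role in Theorem~\ref{main-thm-classical} itself; it enters only to license the invocation of Popescu's theorem on the generic fiber, via the equality of geometric regularity with regularity for characteristic~$0$ residue fields.
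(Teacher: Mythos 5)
Your skeleton --- apply the slicing criterion with $r = \pi$, then use Popescu on the generic fiber --- is the right one, but two of your steps have real gaps. The first is the proposed reduction to the $\O$-flat case. You cannot replace $R$ by its quotient $R/T$ by $\pi$-power torsion: the completion map $R \to \hat{R}$ does not factor through $R/T$. For example, with $R = \O[x]/(\pi x)$, the class of $x$ is $\pi$-torsion but is nonzero in $R/\pi R$, hence nonzero in $\hat{R} = \varprojlim R/\pi^n R$. So when $R$ has torsion you cannot assume it is $\O$-flat, and flatness of $\hat{R}$ over $R$ --- a hypothesis of Theorem \ref{main-thm-classical} --- is simply not available. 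The paper sidesteps this by invoking the animated Theorem \ref{main-thm}, which has no flatness hypothesis; the bounded-torsion observation (via the Stacks Project tag \texttt{053G}) is used only to identify the classical $\pi$-completion $\Lambda$ with the derived one, so that the derived quotients satisfy $R/\pi^n = \Lambda/\pi^n\Lambda$ for all $n$ and the special-fiber hypothesis of Theorem \ref{main-thm} becomes vacuous.

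The second gap is in your regularity argument. The principle that a flat local map between regular Noetherian local rings has regular closed fiber is false: $k[[t]] \to k[[t,x]]/(x^2 - t) \cong k[[x]]$ is a flat local map of regular local rings whose closed fiber $k[x]/(x^2)$ is non-reduced. Regularity of $R[1/\pi] \to \hat{R}[1/\pi]$ is not a formal consequence of flatness plus regularity of source and target. The paper instead uses the criterion of tag \texttt{07NT}, which reduces regularity to the assertion that every maximal ideal of $\hat{R}[1/\pi]$ pulls back to a maximal ideal of $R[1/\pi]$ and the two completions agree; this is what BGR \S 7.1.1 supplies. You also skipped the preliminary reduction to $\O$ complete, carried out in the paper via another application of Theorem \ref{main-thm} to $\O \to \hat{\O}$, which is what makes the Tate-algebra facts applicable without further fuss.
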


It is nontrivial to deduce Corollary \ref{corollary-cp} from our main theorems; this uses several standard facts on affinoid algebras. We will address it at the beginning of \S\ref{proof-main}. 

Our proof of Theorem \ref{main-thm-classical} uses in an essential way the theory of animated rings as is developed in \cite[\S 25]{sag} and is recalled and used in \cite[\S 5]{purity-flat}. Working with animated rings, one no longer worries about zerodivisors as one does in \cite[\texttt{07CN},\texttt{07CQ}]{stacks}; instead, one introduces higher homotopies when taking quotient by a zerodivisor so that the ``kernel'' stays free, and tries to control the resulting higher homotopies. Specifically, we generalize the definition of smoothness and the statement of Theorem \ref{main-thm-classical} as follow: 

\begin{definition}[Definition \ref{def-smooth}]
    Let $R$ be an animated ring and $d$ be a natural number. An animated $R$-algebra $A$ is \emph{$d$-smooth} if it is a compact object in the category of animated $R$-algebras, and its cotangent complex has tor-amplitude concentrated in $[0,d]$. It is \emph{ind-$d$-smooth} if it is a filtered colimit of $d$-smooth algebras. 
\end{definition}

\begin{remark}
    Here compactness is the animated analog of the classical notion of finite presentation. Quillen showed \cite[Theorem 5.5]{quillen-homology} that for maps of classical rings, $0$-smoothness is smoothness and $1$-smoothness is lci. He also conjectured \cite[Conjectures 5.6, 5.7]{quillen-homology} that over a Noetherian classical ring, any $d$-smooth classical algebra is $2$-smooth, and is $1$-smooth if its tor-amplitude is finite. The latter conjecture has been solved by Avramov in \cite{avramov-lci}; see also \cite{briggs-iyengar-cotangent}. 
\end{remark}

\begin{theorem}\label{main-thm}
    Let $R$ be an animated ring and $S\subseteq R$ be a multiplicative submonoid. Let $d$ be a natural number and $\Lambda$ be an animated $R$-algebra. Then $\Lambda$ is ind-$d$-smooth over $R$ if: 
    \begin{itemize}
        \item $S^{-1}\Lambda$ is ind-$d$-smooth over $S^{-1}R$;
        \item For all $r\in S$, $\Lambda/r\Lambda$ is ind-$d$-smooth over $R/r$. 
    \end{itemize}
\end{theorem}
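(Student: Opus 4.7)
The strategy is to show that any map $A\to\Lambda$ from a compact animated $R$-algebra $A$ admits a factorization $A\to A'\to\Lambda$ with $A'$ a compact $d$-smooth $R$-algebra. Since $\Lambda$ is the filtered colimit of its compact sub-$R$-algebras, producing such factorizations cofinally exhibits $\Lambda$ as a filtered colimit of $d$-smooth algebras, i.e., as ind-$d$-smooth.

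First I would reduce to the case where $S=\{r^n:n\ge 0\}$ for a single element $r\in R$. A general multiplicative monoid is the filtered union of its finitely generated submonoids, so by a compactness argument we may assume $S$ is finitely generated; then iterated application of the single-element case strips off one generator at a time. At each stage one uses that $(R/r_i)[1/r_j]\simeq R[1/r_j]/r_i$ and likewise for $\Lambda$, together with the fact that ind-$d$-smoothness is preserved under base change, so that the two hypotheses propagate through the recursion.

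In the single-element case, the two hypotheses supply factorizations $A[1/r]\to B\to\Lambda[1/r]$ with $B$ compact and $d$-smooth over $R[1/r]$, and $A/r\to C\to\Lambda/r$ with $C$ compact and $d$-smooth over $R/r$. The plan is to assemble these into a single compact $R$-algebra $A'$ with a factorization $A\to A'\to\Lambda$ such that both $A'[1/r]$ and $A'/r$ are $d$-smooth. Given such an $A'$, the base-change identities $L_{A'/R}[1/r]\simeq L_{A'[1/r]/R[1/r]}$ and $L_{A'/R}\otimes_R^\L R/r\simeq L_{(A'/r)/(R/r)}$, combined with the fact that the tor-amplitude of an $R$-module can be tested after inverting $r$ and after reducing modulo $r$, imply that $L_{A'/R}$ itself has tor-amplitude in $[0,d]$, so that $A'$ is $d$-smooth over $R$.

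The main obstacle is the gluing step. A naive pullback of $B$ and $C$ does not produce a compact $R$-algebra, because the arithmetic fracture square for $r\in R$ involves $r$-adic completion, which is absent from the hypotheses. I expect the construction to proceed by iterative approximation: building $A'$ as the colimit of a controlled sequence $A=A_0\to A_1\to\cdots$ of compact $R$-algebras over $\Lambda$, each stage improving the tor-amplitude of $L_{A_i/R}$ one Postnikov step further, guided by the auxiliary $d$-smooth factorizations $B$ and $C$ and mediated by the pair machinery suggested by the preamble. Proving the theorem simultaneously for all $d$ is likely essential here, since controlling higher homotopies in the inductive step naturally involves $(d+1)$-smoothness of auxiliary objects one degree up from $A'$.
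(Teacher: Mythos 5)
Your opening reduction---show that every map $A\to\Lambda$ from a compact $R$-algebra factors through a $d$-smooth one---is exactly the paper's starting point, and your observation that tor-amplitude of a perfect complex can be tested after inverting $r$ and after reducing mod $r$ is correct (test on residue fields of $\pi_0$) and is consistent with the logic the paper uses. The reduction to a single element $r$, via iterated application after passing to finitely generated submonoids, also works (though the phrase ``we may assume $S$ is finitely generated'' is slightly misleading: one cannot replace $S$ by a finitely generated $S_0$ wholesale, since $S_0^{-1}\Lambda$ need not be ind-$d$-smooth over $S_0^{-1}R$; rather, for each fixed compact $A\to\Lambda$ one uses compactness of the $d$-smooth witness over $S^{-1}R$ to descend the relevant factorization datum to some finitely generated $S_0$). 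The paper does not make this reduction explicit; it instead carries the full $S$ through the cotangent-complex reductions and only extracts a single element $p\in S$ near the very end.

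The genuine gap is that you have not supplied the construction that does all the work. You correctly identify that a na\"ive pullback of the two factorizations $B$ over $R[1/r]$ and $C$ over $R/r$ will not be compact, and you speculate about ``iterative approximation improving the tor-amplitude one Postnikov step further.'' That intuition matches the outer shell of the paper's argument (there is indeed an induction reducing $(d+1)$-smooth to $d$-smooth, using Lemma~\ref{cellular-replacement}, Corollary~\ref{localization-extending}, and Theorem~\ref{factorize-smooth} to bring $R\to A$ to a normalized form with controlled, stably free cotangent complex). But the heart of each inductive step is the desingularization: given maps $s\colon N[d]\to\L_{A/R}$ and $t\colon\L_{A/R}\to N[d]$ with $s\circ t = p$, and given a $d$-smooth $\bar R$-algebra $\bar C$ factoring $\bar A\to\bar\Lambda$ modulo $r=p^4$, produce a $d$-smooth $R$-algebra $B$ factoring $A\to\Lambda$. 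This is Lemmas~\ref{desing-lemma-0}--\ref{ultimate-desing-d}, resting on the affine-blowup lifting of Lemma~\ref{lifting-lemma}, and in the $d=0$ case it requires an explicit and rather delicate polynomial substitution (the construction $x\mapsto p^2(v\cdot h(0)+pw)$ and the resulting $B=S\sslash c$, inherited from \cite[\texttt{07CQ}]{stacks}) that cannot be recovered from Postnikov-tower bookkeeping alone. Moreover your framing is subtly off: the paper does not try to produce a single $A'$ that is simultaneously $d$-smooth after inverting $r$ and after reducing mod $r$; rather, it uses the $S^{-1}R$ hypothesis only to normalize $A$ and its cotangent complex, and then the only input to the desingularization lemma is the factorization over $R/r$. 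Until that construction is written down, the proposal does not constitute a proof.
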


Here all quotients are animated, namely the animated tensor product $-\otimes_{\Z[x]}\Z$ where $x\mapsto r$ on the left and $x\mapsto0$ on the right. Note that Theorem \ref{main-thm} does not immediately imply Theorem \ref{main-thm-classical}, as it is not clear that ind-smoothness of $R/r\to\Lambda/r\Lambda$ implies ind-smoothness of $R/r^n\to\Lambda/r^n\Lambda$, and the quotient in Theorem \ref{main-thm-classical} is not animated. We will address this implication in \S\ref{lifting-problem}. 

\subsection{Convention}
By ``rings'' we mean commutative rings. Everything is animated unless otherwise stated, that is: 
\begin{itemize}
    \item By ``categories'' we mean $\infty$-categories, unless we say ``ordinary categories''; 
    \item By ``rings'' we mean animated rings, unless we say ``classical rings''; 
    \item By ``modules'' we mean animated modules, unless we say ``classical modules'';
    \item \ldots
\end{itemize}
But by ``sets'' we still mean $0$-groupoids, because we have ``animas'' for $\infty$-groupoids. 

In any category, we say that a map $A\to B$ is an \emph{injection} or that $A$ is a \emph{subobject} of $B$, if the fiber product $A\times_BA$ exists and the diagonal $A\to A\times_BA$ is an isomorphism; \emph{surjections} refer to the class of maps left orthogonal to the class of injections, whenever this exists. 

We do not notationally distinguish between a ring or a module and its underlying anima, just as in classical commutative algebra we do not notationally distinguish between a ring or a module and its underlying set. 

We denote by $\Ani$ the category of animas, namely the category of spaces in \cite[Definition 1.2.16.1]{htt}, and by $\Fun$ the $(\infty,1)$-category of functors, as in \cite[Notation 1.2.7.2]{htt}. For a ring $R$ and $R$-algebras $A$ and $B$, we will consider their maps both as $R$-algebras and as $R$-modules; to avoid confusion, we denote by $\Hom_R(A,B)$ the anima of $R$-algebra maps, by $\map_R(A,B)$ the $R$-complex of $R$-module maps, and by $\Map_R(A,B)$ the $R$-module or the anima of $R$-module maps. 

\subsection{Acknowledgements}
I thank Ofer Gabber for bringing the problem into my view. I also want to thank Bhargav Bhatt, Kęstutis Česnavičius, Lars Hesselholt, and Bogdan Zavyalov for helpful discussions. 

\section{Animated ring theory}

\subsection{Recollection on animated rings}
We first recall the theory of animated rings very concisely, and derive some useful facts. For a more detailed treatment on basics of animated rings, see \cite[\S 5.1]{purity-flat} or \cite[\S 25]{sag}. 

\begin{definition}[Animated rings]
    Let $\fPoly$ denote the category of finitely generated polynomial $\Z$-algebras. \emph{Animated rings} are functors $\fPoly^\op\to\Ani$ preserving finite products. Denote by $\Ring$ the category of animated rings. The \emph{underlying anima} of an animated ring is its value on $\Z[x]$. 
\end{definition}

\begin{definition}[Modules]
    The inclusion functor from $\fPoly$ to the category of connective $\E_\infty$-$\Z$-algebras left Kan extends to $\Ring$; we call the extended functor taking \emph{underlying $\E_\infty$-rings}. For a ring $R$, let $(\D(R),\otimes_R)$ denote the symmetric monoidal category of modules of its underlying $\E_\infty$-ring, and we call its objects \emph{$R$-complexes}; this comes equipped with a t-structure, and we call its connective objects \emph{$R$-modules}. Alternatively, one can animate the category of modules as in the end of \cite[\S 5.1.7]{purity-flat}, and define the category of complexes as the stabilization \cite[Definition 1.4.2.8]{ha} of the category of modules. 
\end{definition}

\begin{definition}[Perfect complexes]
    For a ring $R$, let $\Perf(R)\subseteq\D(R)$ denote the full subcategory of compact objects in $\D(R)$, whose objects are called \emph{perfect $R$-complexes}; for $c,d\in\Z$, let $\Perf_{\ge c}(R)=\Perf(R)\cap\D_{\ge c}(R)$, 
    $$\Perf_{\le d}(R)=\{M\in\Perf(R)\mid\forall N\in\D_{>d}(R),\,\pi_0\Map_R(M,N)=0\},$$
    and $\Perf_{[c,d]}(R)=\Perf_{\ge c}(R)\cap\Perf_{\le d}(R)$; let $\cProj(R)=\Perf_{[0,0]}(R)$ and call its objects \emph{finite projective $R$-modules}. These subcategories constitute a bounded weight structure defined in \cite[Definition 2.2.1]{elmanto-sosnilo-weight}. 
\end{definition}

The following proposition collects basic properties of $\Perf_{[c,d]}(R)$. 

\begin{proposition}\label{perfect-c-d}
    Let $R$ be a ring and $c$ and $d$ be integers. 
    \begin{enumerate}
        \item\label{one-step-free-resolution} For any $M\in\Perf_{\ge c}(R)$, there exists a finite free $R$-module $P$ and a map $P[c]\to M$ with cofiber $C\in\Perf_{\ge c+1}(R)$. If $d>c$ and $M\in\Perf_{[c,d]}(R)$, then in fact $C\in\Perf_{[c+1,d]}(R)$. 
        \item\label{extension-generation} $\Perf_{[c,d]}(R)$ is the smallest full subcategory of $\Perf(R)$ that is closed under extensions and contains $P[n]$ for any $P\in\cProj(R)$ and integer $n\in[c,d]$. 
        \item\label{retract-free} $\cProj(R)$ consists of exactly retracts of finite free $R$-modules. 
        \item\label{dualizable} $\Perf(R)$ consists of exactly the dualizable objects in $(\D(R),\otimes_R)$, and the dual of $\Perf_{[c,d]}(R)$ is $\Perf_{[-d,-c]}(R)$. 
    \end{enumerate}
\end{proposition}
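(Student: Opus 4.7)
My plan is to prove the four parts in order, with (1) serving as the engine. Parts (2) and (3) will be short applications of (1), and (4) will follow from (2) plus the standard identification of perfect complexes with dualizable objects in $\D(R)$.

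For (1), I would first use that any $M \in \Perf_{\ge c}(R)$ has $\pi_c M$ finitely generated over $\pi_0 R$ (perfect complexes over a connective $\E_\infty$-ring have finitely generated homotopy). Lifting generators gives a map $\varphi \colon R^r[c] \to M$; its cofiber $C$ has $\pi_c C = 0$ by a short LES, so $C \in \Perf_{\ge c+1}$. For the upper-bound refinement when $d > c$ and $M \in \Perf_{[c,d]}$, I would take any $N \in \D_{>d}(R)$, apply $\Map_R(-,N)$ to the cofiber sequence, and read off $\pi_0 \Map_R(C, N)$ from the LES; the only potentially nonzero contribution is $\pi_1 \Map_R(R^r[c], N) = (\pi_{c+1} N)^r$, which vanishes because $c+1 \le d$.

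Iterating (1) then yields (2): starting from $M \in \Perf_{[c,d]}$, peeling off $R^{r_i}[c+i]$ one weight at a time leaves a remainder in $\Perf_{[c+i+1, d]}$, which vanishes after $d-c+1$ steps, exhibiting $M$ as an iterated extension of shifts of finite free modules. Closure of $\Perf_{[c,d]}$ under extensions is a direct LES check analogous to the one in (1). For (3), applying (1) to $P \in \Perf_{[0,0]}$ gives a map $R^r \to P$ with cofiber $C \in \Perf_{\ge 1} \subseteq \D_{>0}$; since $P \in \Perf_{\le 0}$, one has $\pi_0 \Map_R(P, C) = 0$, so $\id_P$ lifts through $R^r \to P$ and $P$ is a retract of $R^r$. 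The reverse inclusion is immediate, as $\cProj$ plainly contains $R^r$ and is closed under retracts.

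For (4), perfect equals dualizable in $\D(R)$ is classical. Using (2), identifying the dual of $\Perf_{[c,d]}$ reduces to noting that dualization is exact on perfect complexes, sends $[n]$ to $[-n]$, and preserves $\cProj$ (since $R^r$ is self-dual and retracts pass through duality); this places the dual inside $\Perf_{[-d,-c]}$, and involutivity of duality gives equality. I expect the main obstacle to be the tor-amplitude bookkeeping in (1): keeping the shifts straight in the LES and checking that the hypothesis $d > c$ is exactly what is needed to force the obstruction group $(\pi_{c+1} N)^r$ to vanish. Everything downstream is formal once this step is in place.
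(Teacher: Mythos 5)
Your proposal takes essentially the same route as the paper's proof: reduce (1) to lifting generators of the bottom homotopy group and peeling off a shifted finite free module, iterate to get (2), use the nullhomotopy trick for (3), and deduce (4) from the standard compact-equals-dualizable identification together with (2). Two small cautions are worth flagging. First, the parenthetical justification ``perfect complexes over a connective $\E_\infty$-ring have finitely generated homotopy'' is false in general: over a non-coherent classical ring $R$ with a non-finitely-generated annihilator $\mathrm{Ann}(r)$, the perfect complex $\cofib(R \xrightarrow{r} R) \in \Perf_{[0,1]}(R)$ has $\pi_1$ not finitely generated. What you actually use -- that the \emph{bottom} group $\pi_c M$ of $M \in \Perf_{\ge c}(R)$ is finitely generated (indeed finitely presented) -- is correct, and the paper proves it by observing that $\pi_0\colon \D_{\ge 0}(R) \to \D_\heartsuit(\pi_0 R)$ preserves compact objects, being left adjoint to an inclusion that preserves filtered colimits. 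Second, in your sketch of (2) the remainder does not ``vanish after $d-c+1$ steps'': the $d > c$ hypothesis in (1) is exactly what is needed to keep the cofiber inside $\Perf_{\le d}$, so iteration must stop once you land in $\Perf_{[d,d]}(R) = \cProj(R)[d]$, and the iterated extension terminates with a shift of a finite \emph{projective} (not necessarily free) module -- which is exactly the form demanded by the statement of (2).
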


\begin{proof}
    Note first that by definition it is easy to check that $\Perf_{\ge c}(R)$, $\Perf_{\le d}(R)$, and $\Perf_{[c,d]}(R)$ are all closed under extensions. 
    \begin{enumerate}
        \item Translating, we may assume $c=0$. Since the t-structure on $\D(R)$ is compatible with filtered colimits by \cite[Proposition 7.1.1.13]{ha}, $\Perf_{\ge0}(R)$ consists of compact objects of $\D_{\ge0}(R)$. Consider the functor $\pi_0\colon\D_{\ge0}(R)\to\D_\heartsuit(R)=\D_\heartsuit(\pi_0(R))$. It is left adjoint to the natural inclusion functor which preserves filtered colimits, so it preserves compact objects. This means that for $M\in\Perf_{\ge0}(R)$, $\pi_0(M)$ is a finitely presented classical $\pi_0(R)$-module. Choose generators $m_1,\ldots,m_r$ for $\pi_0(M)$ and lift them to $M$. They define a map $P=R^{\oplus r}\to M$, and by construction its cofiber $C$ has trivial $\pi_0$ and hence lies in $\Perf_{\ge1}(R)$. If $d>0$, then $P[1]\in\Perf_{\le d}(R)$, so by the fiber sequence $M\to C\to P[1]$ we can see that $C\in\Perf_{\le d}(R)$. 
        \item It is obvious that $P[n]\in\Perf_{[c,d]}(R)$ for $P\in\cProj(R)$ and $n\in[c,d]$. It remains to see the other direction, which for $d=c$ follows immediately from the definition, and otherwise follows from (\ref{one-step-free-resolution}) by induction. 
        \item Take $M\in\cProj(R)$ and take a map $P\to M$ from a finite free $R$-module $P$ with cofiber $C\in\Perf_{\ge1}(R)$ as in (\ref{one-step-free-resolution}). By definition, $\pi_0\Map_R(M,C)=0$, so the map $M\to C$ is nullhomotopic and thus $M$ is a retract of $P$. 
        \item Since the tensor unit $R\in\D(R)$ is compact, dualizable objects are compact. Since $\D(R)$ is generated under colimits by the translations of $R$ which are dualizable, compact objects are dualizable. The rest follows from (\ref{extension-generation}). \qedhere
    \end{enumerate}
\end{proof}

\begin{remark}
    It is not hard to see that $\Perf_{\le d}(R)$ consists of exactly those perfect $R$-complexes with tor-amplitude $\le d$ as defined in \cite[Definition 7.2.4.21]{ha}. We choose not to include the proof of this fact as we will not use it in the following. 
\end{remark}

\begin{definition}[Stably free complexes]
    For a ring $R$, we say that a perfect $R$-complex $M$ is \emph{stably free} if its class $[M]\in\K_0(R):=\K_0(\Perf(R))$ is an integer multiple of $[R]$. Here $\K_0$ of a small stable category means the classical abelian group generated by objects of the category subject to the relations $[Y]=[X]+[Z]$ for all fiber sequences $X\to Y\to Z$. 
\end{definition}

The following proposition is actually a simple special case of \cite[Theorem 6.0.1]{elmanto-sosnilo-weight}, but we give an elementary proof here for the convenience of the reader. 

\begin{proposition}\label{projective-K0-comparison}
    Let $R$ be a ring. Then $\K_0(R)$ is the group completion of the monoid of isomorphism classes of $\cProj(R)$ under the direct sum. 
\end{proposition}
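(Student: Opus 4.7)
The plan is to construct an inverse to the natural homomorphism $\alpha\colon G\to\K_0(R)$, where $G$ denotes the group completion of the commutative monoid $(\pi_0\cProj(R),\oplus)$; the map $\alpha$ is well-defined because direct sum decompositions are split fiber sequences.

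Surjectivity of $\alpha$ is immediate from the weight structure: every $M\in\Perf(R)$ lies in some $\Perf_{[c,d]}(R)$ by Proposition~\ref{perfect-c-d}(\ref{extension-generation}), and iterating Proposition~\ref{perfect-c-d}(\ref{one-step-free-resolution}) presents $[M]=\sum_{i=c}^d(-1)^i[P_i]$ in $\K_0(R)$ with each $P_i$ finite free.

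For injectivity I would define an Euler characteristic $\chi$ on isomorphism classes of $\Perf(R)$ by the recursion $\chi(0)=0$ and $\chi(M)=(-1)^c[P]+\chi(C)$ for any chosen one-step resolution $P[c]\to M\to C$ of $M\in\Perf_{[c,d]}(R)$ with $C\in\Perf_{[c+1,d]}(R)$. Granted that $\chi$ is well-defined on isomorphism classes and additive on cofiber sequences, it descends to a homomorphism $\beta\colon\K_0(R)\to G$, and the identities $\alpha\beta=\mathrm{id}$ and $\beta\alpha=\mathrm{id}$ are immediate from the constructions.

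The main obstacle is the simultaneous proof of well-definedness and additivity, which I would carry out by induction on the width $n=d-c$; the base case $n=0$ is easy since $\Perf_{[c,c]}(R)=\cProj(R)[c]$ and cofiber sequences there split. For the inductive step of well-definedness, given two resolutions $P[c]\to M\to C$ and $Q[c]\to M\to D$, the key move is to form the combined map $(P\oplus Q)[c]\to M$ with cofiber $E$: the octahedral axiom yields cofiber sequences $Q[c]\to C\to E$ and $P[c]\to D\to E$, a short long-exact-sequence check places $E$ in $\Perf_{[c+1,d]}(R)$, and the rotated sequences $C\to E\to Q[c+1]$ and $D\to E\to P[c+1]$ live inside $\Perf_{[c+1,d]}(R)$ (width $n-1$), so the inductive additivity hypothesis identifies the two values of $\chi(M)$. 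For the inductive step of additivity on a cofiber sequence $X\to Y\to Z$ inside $\Perf_{[c,c+n]}(R)$, I would pick resolutions $P_X[c]\to X\to X'$ and $P_Z[c]\to Z\to Z'$, lift the second to a map $P_Z[c]\to Y$ (possible because $\pi_0\Map(P_Z[c],X[1])$ vanishes by connectivity of $X$), combine to a resolution $(P_X\oplus P_Z)[c]\to Y\to W'$, and then verify via another octahedral argument that $W'$ fits into a cofiber sequence $X'\to W'\to Z'$ inside $\Perf_{[c+1,c+n]}(R)$, to which the inductive additivity hypothesis applies to yield $\chi(Y)=\chi(X)+\chi(Z)$.
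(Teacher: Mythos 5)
Your argument runs parallel to the paper's: surjectivity from the weight decomposition; a left inverse $\chi$ defined by one-step resolutions; well-definedness within a window $[c,d]$ proved by the combined resolution $(P\oplus Q)[c]\to M$ together with the rotated cofiber sequences into $\Perf_{[c+1,d]}$; and additivity proved by lifting the resolution of $Z$ to $Y$ via the connectivity of $X[1]$ — all exactly the moves in the paper's proof. The paper fixes $d$ and inducts downward on $c$ while you induct on the width $d-c$, which is essentially a cosmetic reorganization of the same induction.

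There is, however, one step you omit that the paper isolates explicitly. Your recursion $\chi(M)=(-1)^c[P]+\chi(C)$ is parameterized by a choice of window $[c,d]$ with $M\in\Perf_{[c,d]}(R)$, and your induction on width only establishes independence of the chosen one-step resolution inside a \emph{fixed} window; it does not by itself show that enlarging the window leaves $\chi(M)$ unchanged, which you need for $\chi$ to be a genuine function on $\Perf(R)$. Lowering $c$ is harmless (take the trivial resolution $0[c]\to M\to M$), and for $M\in\Perf_{[c,d']}$ with $d'>c$, Proposition~\ref{perfect-c-d}(\ref{one-step-free-resolution}) forces any one-step cofiber back into $\Perf_{[c+1,d']}$, so a larger $d$ gives the same recursion. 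But the boundary case $M=P[d']\in\Perf_{[d',d']}(R)$ needs a separate check, since there a one-step resolution $F[d']\to P[d']\to Q[d'+1]$ with $Q\neq 0$ is possible. The paper treats exactly this in its final paragraph (``independent of $d$''), noting that this triangle splits, so $F\cong P\oplus Q$ and $(-1)^{d'}[P]=(-1)^{d'}[F]+(-1)^{d'+1}[Q]$. Adding that observation closes your argument.
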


\begin{proof}
    Denote the group completion by $\K_0^{\oplus}(R)$. It has an obvious map to $\K_0(R)$ which is surjective by Proposition \ref{perfect-c-d}(\ref{extension-generation}). It suffices to define a left inverse $\K_0(R)\to\K_0^{\oplus}(R)$ to this map. In other words, we need to assign an element $[X]^\oplus$ for $X\in\Perf(R)$ such that $[Y]^\oplus=[X]^\oplus+[Z]^\oplus$ for fiber sequences $X\to Y\to Z$ in $\Perf(R)$. 
    
    First fix $d\in\Z$, and do downward induction on $c$ to define $[X]^\oplus$ for $X\in\Perf_{[c,d]}(R)$ and to show $[Y]^\oplus=[X]^\oplus+[Z]^\oplus$ for fiber sequences $X\to Y\to Z$ in $\Perf_{[c,d]}(R)$. If $c=d$, then $X=P[d]$ for $P\in\cProj(R)$, and we just define $[X]^\oplus=(-1)^d[P]$. Its compatibility with fiber sequences follows from the fact that fiber sequences in $\cProj(R)$ split. Otherwise, take a map $f\colon P[c]\to X$ with $P\in\cProj(R)$ and cofiber in $\Perf_{[c+1,d]}(R)$, and define $[X]^\oplus=(-1)^c[P]+[\cofib(f)]^\oplus$. We first show that this is independent of $f$: take another such map $g\colon Q[c]\to X$ and let $(f,g)\colon P[c]\oplus Q[c]\to X$ be the direct sum. Then the commutative diagram
    $$\begin{tikzcd}
        &P[c]\ar[r,equal]\ar[d]&P[c]\ar[d]\\
        Q[c]\ar[r]\ar[d,equal]&X\ar[r]\ar[d]&\cofib(g)\ar[d]\\
        Q[c]\ar[r]&\cofib(f)\ar[r]&\cofib(f,g)
    \end{tikzcd}$$
    shows $(-1)^c[P]+[\cofib(f)]^\oplus=(-1)^c[P\oplus Q]+[\cofib(f,g)]^\oplus=(-1)^c[Q]+[\cofib(g)]^\oplus$ by induction hypothesis. In particular, it agrees with the existing definition on $\Perf_{[c+1,d]}(R)$. For a fiber sequence $X\to Y\to Z$ in $\Perf_{[c,d]}(R)$, take $f\colon P[c]\to X$ and $g\colon Q[c]\to Z$ with $P,Q\in\cProj(R)$ and cofibers in $\Perf_{[c+1,d]}(R)$. Since $\cofib(Y\to Z)=X[1]\in\D_{>c}(R)$, one can lift $g$ to a map $Q[c]\to Y$ and form the commutative diagram
    $$\begin{tikzcd}
        P[c]\ar[r]\ar[d]&P[c]\oplus Q[c]\ar[r]\ar[d]&Q[c]\ar[d]\\
        X\ar[r]&Y\ar[r]&Z
    \end{tikzcd}$$
    Taking vertical cofibers, the desired equality for $X\to Y\to Z$ follows by induction. 

    It remains to show that the above definition is independent of $d$. This boils down to showing that for $X=P[d]\in\Perf_{[d,d]}(R)$ and $f\colon F[d]\to X$ with $F\in\cProj(R)$ and cofiber $\cofib(f)=Q[d+1]\in\Perf_{[d+1,d+1]}$, we have $(-1)^d[P]=(-1)^d[F]+(-1)^{d+1}[Q]$. This is obvious, as in fact $F=P\oplus Q$. 
\end{proof}

The following corollary is obvious. 

\begin{corollary}\label{stably-free-corollary}
    Let $R$ be a ring and $c<d$ be integers. 
    \begin{enumerate}
        \item If $P\in\cProj(R)$ is stably free, then there exists a finite free module $Q$ such that $P\oplus Q$ is also finite free. 
        \item Every stably free complex $M\in\Perf_{[c,d]}(R)$ is a finite extension of modules of the form $R[n]$ for integers $n\in[c,d]$. 
    \end{enumerate}
\end{corollary}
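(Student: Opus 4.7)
My plan is as follows. For (1), invoke Proposition \ref{projective-K0-comparison}, which identifies $\K_0(R)$ with the group completion of the monoid $(\cProj(R)/{\cong}, \oplus)$. The hypothesis $[P] = n[R]$ in $\K_0(R)$ therefore amounts (say for $n \ge 0$) to the existence of $E \in \cProj(R)$ with $P \oplus E \cong R^n \oplus E$; choosing a complement $E'$ with $E \oplus E' \cong R^m$, which exists by Proposition \ref{perfect-c-d}(\ref{retract-free}), gives $P \oplus R^m \cong R^{n+m}$, so $Q = R^m$ works. The case $n < 0$ is handled symmetrically.

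For (2), I first apply Proposition \ref{perfect-c-d}(\ref{one-step-free-resolution}) iteratively to exhibit $M$ as an iterated extension whose successive graded pieces are $P_c[c], P_{c+1}[c+1], \ldots, P_d[d]$ with $P_i \in \cProj(R)$. The aim is to modify this presentation so that every $P_i$ becomes finite free.

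The central device is a \emph{trivial-extension trick}: for any $Q \in \cProj(R)$ and index $i$, the null fiber sequence $Q[i] \to 0 \to Q[i+1]$ defines a filtered object with vanishing underlying object whose associated graded contributes $Q[i]$ at position $i$ and $Q[i+1]$ at position $i+1$. Direct-summing this filtered object into the filtered presentation of $M$ produces a new filtered object still underlying $M$ in which the graded pieces at positions $i$ and $i+1$ have each acquired a $Q$-summand, since the associated graded of a direct sum of filtered objects is the direct sum of the associated gradeds. For each $i = c, \ldots, d-1$ in turn, choose $Q \in \cProj(R)$ with $P_i \oplus Q$ finite free, guaranteed by Proposition \ref{perfect-c-d}(\ref{retract-free}), and apply the trick; after these modifications the graded pieces at positions $c, \ldots, d-1$ are finite free, and the piece at position $d$ is some $A_d[d]$ with $A_d \in \cProj(R)$.

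To finish, additivity of $[\cdot]$ on fiber sequences together with the hypothesis that $[M]$ is an integer multiple of $[R]$ forces $[A_d]$ to be as well, so $A_d$ is stably free. By part (1) there is a finite free $R^k$ with $A_d \oplus R^k$ finite free. Applying the trivial-extension trick a final time at $i = d-1$ with $Q = R^k$ leaves the $(d-1)$st piece free (one is adding a finite free summand to something already finite free) while turning the $d$th piece into $A_d \oplus R^k$, which is finite free. The main subtlety is to make the trivial-extension trick precise — i.e.\ to work cleanly in the category of filtered objects and track associated gradeds — after which everything reduces to bookkeeping in $\K_0$.
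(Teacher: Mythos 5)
Your proof is correct and, since the paper declares this corollary ``obvious'' and gives no written argument, it fills in essentially the reasoning the authors have in mind: use the group-completion description of $\K_0$ from Proposition~\ref{projective-K0-comparison} for part (1), and for part (2) resolve $M$ via Proposition~\ref{perfect-c-d}(\ref{one-step-free-resolution}), transfer stable freeness to the top graded piece by additivity in $\K_0$, and absorb the defect with a trivial extension.

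One genuine simplification you missed: Proposition~\ref{perfect-c-d}(\ref{one-step-free-resolution}) already produces a \emph{finite free} $P$ at each stage of the iteration, so in the filtration with graded pieces $P_c[c],\ldots,P_d[d]$ the modules $P_c,\ldots,P_{d-1}$ are automatically finite free and only the terminal piece $P_d\in\cProj(R)$ is merely projective. Thus your middle paragraph --- running the trivial-extension trick over every $i=c,\ldots,d-1$ to free up the lower pieces --- is redundant; you can pass directly to the $\K_0$ bookkeeping and the single trivial extension at $i=d-1$. The trick itself is fine, and an equivalent more concrete phrasing is to replace the map $P_{d-1}[d-1]\to C$ in the last resolution step by $(P_{d-1}\oplus R^k)[d-1]\to C$ with the extra $R^k$ mapping to zero, which adds $R^k[d]$ to the cofiber $P_d[d]$ and makes it free. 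Part (1) as you wrote it is exactly right (the $n<0$ case is vacuous for nonzero $R$, but it costs nothing to cover it).
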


\begin{definition}[Connectivity]
    For $c\in\N$, a ring map is called \emph{$c$-connective} if it is $c$-connective as an anima map. That is, a ring map $A\to B$ is $c$-connective if: 
    \begin{itemize}
        \item For $i\le c$, the map $\pi_i(A)\to\pi_i(B)$ is a surjection. 
        \item For $i<c$, the map $\pi_i(A)\to\pi_i(B)$ is an isomorphism. 
    \end{itemize}
    Note that a $0$-connective map is the same as a surjection. 
\end{definition}

\begin{proposition}\label{lift-module}
    Let $c\in\N$ and $A\to B$ be a $c$-connective ring map. Then: 
    \begin{enumerate}
        \item\label{lift-mapping-anima} For all $n\in\N$, the functor $\Perf_{[0,n]}(A)\to\Perf_{[0,n]}(B)$ is $(c-n)$-connective on mapping animas. 
        \item The functor $\Perf_{[0,c+1]}(A)\to\Perf_{[0,c+1]}(B)$ hits every object whose $\K_0$ class is hit. In particular, it hits every stably free complex. 
    \end{enumerate}
\end{proposition}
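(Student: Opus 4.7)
My plan for (1) is direct: the map is $\Map_A(M, N) \to \Map_B(M \otimes_A B, N \otimes_A B) \simeq \Map_A(M, N \otimes_A B)$ by adjunction, with fiber $\Map_A(M, N \otimes_A F)$ where $F := \fib(A \to B) \in \D_{\geq c}(A)$ by $c$-connectivity. By Proposition \ref{perfect-c-d}(\ref{extension-generation}), $N$ is an iterated extension of shifts $P_j[k_j]$ with $P_j \in \cProj(A)$ flat and $0 \leq k_j \leq n$, giving $N \otimes_A F \in \D_{\geq c}(A)$. Since $M \in \Perf_{\leq n}(A)$, applying its defining property to the shifts of $N \otimes_A F$ yields $\Map_A(M, N \otimes_A F) \in \D_{\geq c-n}$, i.e.\ the map is $(c-n)$-connective.

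For (2), let $\phi \colon \K_0(A) \to \K_0(B)$ denote the base-change map, and take $N \in \Perf_{[0,c+1]}(B)$ with $[N] \in \mathrm{im}\,\phi$. Iterating Proposition \ref{perfect-c-d}(\ref{one-step-free-resolution}) on the successive cofibers of $P_0 \to N$ produces a filtration $0 = N_{-1} \to N_0 \to \cdots \to N_c$ with fiber sequences $N_{i-1} \to N_i \to P_i[i]$ ($P_i$ finite free over $B$), and a final cofiber $C_c := \cofib(N_c \to N) \in \Perf_{[c+1,c+1]}(B)$ of the form $Q[c+1]$ for some $Q \in \cProj(B)$. Each intermediate extension is classified by a map $P_i[i-1] \to N_{i-1}$ inside $\Perf_{[0,i-1]}(B)$, so by (1) the relevant mapping anima is $(c-i+1)$-connective, which is $\geq 1$ for $i \leq c$ and thus $\pi_0$-surjective. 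Inductively lifting these classifying maps and taking cofibers produces lifts $\tilde{N}_i \in \Perf_{[0,i]}(A)$ of $N_i$ for $i = 0, \ldots, c$, starting from the trivial lift of $N_0 = P_0$.

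The remaining obstacle is the extension by $Q[c+1]$: naively iterating would require $Q$ to be finite free, which is not automatic, and is exactly where the $\K_0$ hypothesis enters. From $[N], [N_c] = \phi[\tilde{N}_c] \in \mathrm{im}\,\phi$ we deduce $[Q] = (-1)^{c+1}([N]-[N_c]) \in \mathrm{im}\,\phi$. Writing $[Q] = \phi[P_1'] - \phi[P_2']$ with $P_1', P_2' \in \cProj(A)$ via Proposition \ref{projective-K0-comparison}, enlarging $P_1'$ and $P_2'$ by a common finite free summand, and reapplying that proposition on $B$, we may assume $Q \oplus (P_2' \otimes_A B) \simeq P_1' \otimes_A B$ as $B$-modules. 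Direct-summing the fiber sequence $N_c \to N \to Q[c+1]$ with the trivial fiber sequence $(P_2' \otimes_A B)[c] \to 0 \to (P_2' \otimes_A B)[c+1]$ yields $N_c \oplus (P_2' \otimes_A B)[c] \to N \to (P_1' \otimes_A B)[c+1]$, whose classifying map $(P_1' \otimes_A B)[c] \to N_c \oplus (P_2' \otimes_A B)[c]$ lives in $\Perf_{[0,c]}(B)$ and hence lifts to $A$ by (1) with $n = c$ (connectivity $0$, so $\pi_0$-surjective). The cofiber of the lift is the desired $\tilde{N} \in \Perf_{[0,c+1]}(A)$ with $\tilde{N} \otimes_A B \simeq N$. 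The stably free addendum is immediate, since $[N] = k[B] = \phi(k[A])$ tautologically lies in $\mathrm{im}\,\phi$.
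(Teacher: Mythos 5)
Your proof is correct and follows essentially the same strategy as the paper's. The differences are cosmetic: in (1) the paper invokes dualizability to place $\map_A(X,Y)=X^\vee\otimes_AY$ in $\Perf_{[-n,n]}(A)$ rather than using a cell decomposition of the target; in (2) the paper packages the layer-by-layer lifting as an induction on $c$ reducing to the base case $c=0$, where the $\K_0$ manipulation via Proposition \ref{projective-K0-comparison} appears, whereas you unfold that induction into an explicit filtration and perform the same $\K_0$ manoeuvre at the top layer.
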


\begin{proof}
    \begin{enumerate}
        \item Let $X,Y\in\Perf_{[0,n]}(A)$ and let $M=\map_A(X,Y)$. By Proposition \ref{perfect-c-d}(\ref{dualizable}), $M=X^\vee\otimes_AY\in\Perf_{[-n,n]}(A)$, and we want to show that $\tau_{\ge0}(M)\to\tau_{\ge0}(M\otimes_AB)$ is $(c-n)$-connective. For this we only need to show that $M\to M\otimes_AB$ is a $(c-n)$-connective map of spectra, which is clear as $M$ is $(-n)$-connective and by assumption $\fib(A\to B)$ is $c$-connective. 
        \item Do induction on $c$. For $c>0$, the claim easily follows from the $n=c$ case of (\ref{lift-mapping-anima}) and Proposition \ref{perfect-c-d}(\ref{one-step-free-resolution}). For $c=0$, we need to lift any module $\bar{X}\in\Perf_{[0,1]}(B)$ to $\Perf_{[0,1]}(A)$. By Proposition \ref{perfect-c-d}(\ref{one-step-free-resolution}), we can write $\bar{X}=\cofib(\bar{P}\to\bar{Q})$ where $\bar{P},\bar{Q}\in\cProj(B)$ and $\bar{Q}$ is finite free. Now the $\K_0$ class of $\bar{P}$ is hit, so by Proposition \ref{projective-K0-comparison} and Proposition \ref{perfect-c-d}(\ref{retract-free}), we can add a finite free module to both $\bar{P}$ and $\bar{Q}$, and assume that they come from $P,Q\in\cProj(A)$, and then the claim follows from (\ref{lift-mapping-anima}). \qedhere
    \end{enumerate}
\end{proof}

The following construction is crucial. 

\begin{definition}[Quotient algebra]\label{quotient-algebra}
    Let $R$ be a ring and $M$ be an $R$-module with a map $f\colon M\to R$. Form the pushout square
    $$\begin{tikzcd}
        \LSym_R(M)\ar[r,"\LSym(0)"]\ar[d,"f"']&R\ar[d]\\
        R\ar[r,"q"']&R\sslash M
    \end{tikzcd}$$
    of $R$-algebras. We call $q\colon R\to R\sslash M$ the \emph{quotient algebra} of $R$ by $M$. Sometimes we will use the notation $R\sslash f$ or $R\sslash(M,f)$ instead of $R\sslash M$ to avoid confusion. 
\end{definition}

\begin{remark}\label{quotient-functoriality}
    The formation of $R\sslash M$ defines a functor from the slice category of $R$-modules over $R$ to the category of $R$-algebras that preserves all colimits. Moreover, it satisfies base change, meaning that for a ring map $R\to R'$, if we set $M'=M\otimes_RR'$ and $f'=f\otimes_RR'$, then $R'\sslash M'=(R\sslash M)\otimes_RR'$. 
\end{remark}

\begin{remark}\label{quotient-yoneda}
    Since $\LSym_R$ is the free functor left adjoint to the forgetful functor from $R$-algebras to $R$-modules, it is easy to see that a map from $R\sslash M$ to an $R$-algebra $A$ is just a nullhomotopy of the $R$-module map $M\to R\to A$. More formally, $R\sslash M$ represents the functor $A\mapsto*\times_{\Map(M,A)}*$ on $\Alg_R$, where the two points are the zero map and the map $M\to R\to A$. 
\end{remark}

We recall some finiteness conditions on algebras. 

\begin{definition}[Compact algebras]\label{compact-algebra}
    We say that a ring map $R\to A$ is:
    \begin{itemize}
        \item \emph{compact}, if $A$ is a compact object in the category of $R$-algebras. 
        \item \emph{classically compact}, if $\pi_0(R)\to\pi_0(A)$ is finitely presented as a map of classical rings. 
    \end{itemize}
\end{definition}

\begin{remark}
    ``Compact'' is the analog of ``locally of finite presentation'' in \cite[Definition 7.2.4.26]{ha} in the animated setting. We choose this term simply because it has far fewer syllables. 
\end{remark}

\subsection{Cotangent complexes}
The cotangent complex is the higher analog of the K\"ahler differential module. Here we will not recall the definitions of square-zero extensions, cotangent complexes, and the basic deformation theory of animated rings; we refer the reader to \cite[\S 5.1.8, \S 5.1.9]{purity-flat} and \cite[\S 25.3]{sag}. Instead, we will control algebras with their cotangent complexes, just as Lurie has done in \cite[\S 7.4.3]{ha} for $\E_\infty$-rings. We will see that the cotangent complex has much more control on higher homotopy than on $\pi_0$; this can be seen as evidence of the intuition that higher homotopy should be thought of as thickening, and is something nearly linear. 

\begin{proposition}\label{cotangent-of-quotient}
    Let notations be as in Definition \ref{quotient-algebra}, and let $I=\fib(R\to R\sslash M)$ with a natural $R$-module map $M\to I$ factorizing $f\colon M\to R$. Then
    $$\L_{(R\sslash M)/R}=M[1]\otimes_R(R\sslash M),$$
    and the composition $M[1]\to I[1]\to\L_{(R\sslash M)/R}=M[1]\otimes_R(R\sslash M)$ of the natural map and the universal derivation can be naturally identified with the base change map of $M[1]$ along $R\to R\sslash M$; in other words, the map $M\to I$ has a retract after base change to $R\sslash M$ given by the scalar extension of the universal derivation. 
\end{proposition}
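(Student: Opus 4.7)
The plan is to compute $\L_{(R\sslash M)/R}$ from the defining pushout using base change and transitivity of the cotangent complex, and then to identify the displayed composite by unwinding the universal property of $R\sslash M$ given in Remark \ref{quotient-yoneda}.

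For the formula, base change of cotangent complexes applied to the pushout defining $R\sslash M$ yields $\L_{(R\sslash M)/R}=\L_{R/\LSym_R(M)}\otimes_R(R\sslash M)$, where $\LSym_R(M)\to R$ is the augmentation $\LSym(0)$. The transitivity fiber sequence for $R\to\LSym_R(M)\xrightarrow{\LSym(0)}R$ reads
$$\L_{\LSym_R(M)/R}\otimes_{\LSym_R(M)}R\to\L_{R/R}\to\L_{R/\LSym_R(M)}.$$
Since $\L_{R/R}=0$ and $\L_{\LSym_R(M)/R}=M\otimes_R\LSym_R(M)$ by the defining adjunction of $\LSym_R$ (which base-changes to $M$ along the augmentation), I conclude $\L_{R/\LSym_R(M)}=M[1]$ and hence $\L_{(R\sslash M)/R}=M[1]\otimes_R(R\sslash M)$.

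For the identification, the universal derivation $\d\colon R\sslash M\to\L_{(R\sslash M)/R}$ is canonically nullhomotopic upon restriction along $R\to R\sslash M$, because the corresponding $R$-algebra map to the trivial square-zero extension $R\sslash M\oplus\L_{(R\sslash M)/R}$ restricts along $R$ to the structure map into the first factor. Thus $\d$ factors uniquely through $\cofib(R\to R\sslash M)=I[1]$, yielding $\psi\colon I[1]\to\L_{(R\sslash M)/R}$. To show the composite $M[1]\to I[1]\xrightarrow{\psi}\L_{(R\sslash M)/R}$ is the base change map, I trace through Remark \ref{quotient-yoneda}: for an $R\sslash M$-module $N$, $\Hom_{\Alg_R}(R\sslash M,R\sslash M\oplus N)$ is the space of paths from $0$ to $(M\to R\to R\sslash M,0)$ in $\Map_R(M,R\sslash M)\times\Map_R(M,N)$, and the fiber over the identity—with the canonical nullhomotopy of $M\to R\to R\sslash M$ as base point—identifies $\mathrm{Der}_R(R\sslash M,N)=\Omega_0\Map_R(M,N)=\Map_R(M[1],N)$. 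Under this chain, the universal derivation $\d$ corresponds to the base change map $M[1]\to M[1]\otimes_R R\sslash M$. Since the map $M\to I$ is induced by exactly the same canonical nullhomotopy that serves as the base point in the $\Omega_0$ computation, unraveling the identifications shows the composite $M[1]\to I[1]\to\L_{(R\sslash M)/R}$ agrees with the base change map.

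The main obstacle is this second step: carefully coordinating the two roles of the canonical nullhomotopy of $M\to R\to R\sslash M$—as the datum defining the factorization $M\to I$ and as the base point in the $\Omega_0$-computation identifying $\mathrm{Der}_R(R\sslash M,N)$ with $\Map_R(M[1],N)$—and verifying compatibility across the chain of $\infty$-categorical adjunctions.
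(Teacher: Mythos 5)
For the formula $\L_{(R\sslash M)/R}=M[1]\otimes_R(R\sslash M)$, your argument (base change along the pushout defining $R\sslash M$ plus the transitivity sequence for $R\to\LSym_R(M)\to R$) is the same as the paper's.

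For the identification of the composite $M[1]\to I[1]\to\L_{(R\sslash M)/R}$ with the base change map, you take a genuinely different route and it has a gap precisely at the point you flag as ``the main obstacle.'' Your chain of identifications correctly shows that $\mathrm{Der}_R(R\sslash M,N)\simeq\Omega_0\Map_R(M,N)=\Map_R(M[1],N)$ naturally in $N$, and that the universal derivation, as an element of $\Map_{R\sslash M}(\L,\L)$, goes to the base change map in $\Map_R(M[1],\L)$. But the proposition asserts something slightly different: that the specific $R$-module map $M[1]\to I[1]\to\L$ equals the base change map. To deduce the latter from your loop-space identification you must check that the equivalence $\Map_{R\sslash M}(\L,N)\simeq\Map_R(M[1],N)$ is implemented by precomposition with $M[1]\to I[1]\to\L$, i.e.\ that the factorization of the underlying module map of the universal derivation through $\cofib(R\to R\sslash M)$, further restricted along $M[1]\to I[1]$, recovers the loop you extract from $\Hom_R(R\sslash M, R\sslash M\oplus N)$. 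You assert this as ``unraveling the identifications,'' but this compatibility between the ring-theoretic universal property (Remark \ref{quotient-yoneda}) and the module-theoretic cofiber sequence is exactly the content to be proved, and it does not follow by mere inspection.

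The paper avoids this $\infty$-categorical bookkeeping entirely: since both $M[1]$ and $M[1]\otimes_R(R\sslash M)$ preserve colimits in the pair $(M,f)$ and every $M$ is a colimit of free modules, one reduces to $M=R$; then $f$ is an element of $R$, and by base change one reduces to the universal case $R=\Z[x]$, $f=x$, where the composite $\Z[x][1]\to x\Z[x][1]\to\Z\,\d x[1]$ can be computed by hand and seen to be reduction modulo $x$. If you want to rescue your approach, the cleanest fix is probably to also reduce to the universal case $R=\Z[x]$, $f=x$ by the same colimit-and-base-change argument (this reduction is valid and useful independent of how one finishes), and then verify the compatibility there concretely instead of abstractly.
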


\begin{proof}
    Recall that $\L_{\LSym_R(M)/R}=M\otimes_R\LSym_R(M)$. Consider the composition $R\to\LSym_R(M)\to R$, where the second arrow is $\LSym_R(M\to0)$. Its cotangent fiber sequence reads
    $$M\to0\to\L_{R/\LSym_R(M)},$$
    which implies $\L_{R/\LSym_R(M)}=M[1]$, so the equality follows by base change. To give the identification, since both $M[1]$ and $\L_{(R\sslash M)/R}=M[1]\otimes_R(R\sslash M)$ preserve colimits with respect to $(M,f)$, we are reduced to the case $M=R$, and then $f$ can be seen as an element in $R$; now by base change we are reduced to the universal such case, namely $R=\Z[x]$ and $f=x$, and we can compute
    $$\L_{\Z/\Z[x]}=\cofib(\L_{\Z[x]/\Z}\otimes_{\Z[x]}\Z\to\L_{\Z/\Z})=\cofib(\Z\d x\to0)=\Z\d x[1],$$
    where the universal derivation $\d\colon\cofib(\Z[x]\to\Z)\to\L_{\Z/\Z[x]}$ takes $x[1]$ to $\d x[1]$ and $x^n[1]$ to $0$ for $n>1$, as expected. 
\end{proof}

\begin{corollary}\label{cotangent-over-quotient}
    Let $R\to A$ be a ring map with cofiber $C$. Let $M$ be an $R$-module with a map $g\colon M\to C[-1]$. Then $R\to A$ naturally factors through $R\sslash M$, and
    $$\L_{A/(R\sslash M)}=\cofib(M[1]\otimes_RA\to\L_{A/R}),$$
    where the map $M[1]\otimes_RA\to\L_{A/R}$ is the scalar extension along $R\to A$ of the composition $M[1]\to C\to\L_{A/R}$ of $g[1]$ and the universal derivation. 
\end{corollary}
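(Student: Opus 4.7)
The plan is to convert the data $g\colon M\to C[-1]$ into a concrete factorization $R\to R\sslash M\to A$, and then read off $\L_{A/(R\sslash M)}$ from the transitivity triangle using Proposition \ref{cotangent-of-quotient}.

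For the first half, I would observe that $C=\cofib(R\to A)$ fits into a fiber sequence $C[-1]\to R\to A$, so $g$ provides simultaneously an $R$-module map $f\colon M\to R$ (by postcomposing with $C[-1]\to R$), which is the data needed to form $R\sslash M$ as in Definition \ref{quotient-algebra}, and tautologically a nullhomotopy of $M\xrightarrow{f}R\to A$. By Remark \ref{quotient-yoneda} this nullhomotopy is exactly the data of an $R$-algebra map $R\sslash M\to A$, giving the factorization. Then applying the transitivity cofiber sequence
$$\L_{(R\sslash M)/R}\otimes_{R\sslash M}A\to\L_{A/R}\to\L_{A/(R\sslash M)}$$
and substituting $\L_{(R\sslash M)/R}=M[1]\otimes_R(R\sslash M)$ from Proposition \ref{cotangent-of-quotient} produces $\L_{A/(R\sslash M)}=\cofib(M[1]\otimes_RA\to\L_{A/R})$.

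It remains to identify this arrow with the scalar extension of $M[1]\xrightarrow{g[1]}C\to\L_{A/R}$. Setting $I=\fib(R\to R\sslash M)$, the factorization $R\sslash M\to A$ induces a commutative diagram
$$\begin{tikzcd}
M[1]\ar[r]\ar[dr,"g{[1]}"']&I[1]\ar[r]\ar[d]&\L_{(R\sslash M)/R}\ar[d]\\
&C\ar[r]&\L_{A/R}
\end{tikzcd}$$
whose right square is naturality of the universal derivation, and whose upper composite $M[1]\to I[1]\to\L_{(R\sslash M)/R}$ is, by Proposition \ref{cotangent-of-quotient}, the base change map $M[1]\to M[1]\otimes_R(R\sslash M)$. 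Scalar extending the outer rectangle along $R\to A$ then yields the claimed identification.

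The main obstacle is commutativity of the triangle on the left, i.e.\ that the composition $M[1]\to I[1]\to C$ equals $g[1]$. This is morally tautological but requires one to carefully unpack Remark \ref{quotient-yoneda}: the induced map of fibers $I\to C[-1]$ coming from the factorization $R\sslash M\to A$, restricted along $M\to I$ from Proposition \ref{cotangent-of-quotient}, should reproduce the original $g$ that was used to construct the factorization. Once this compatibility is in place, the corollary follows directly.
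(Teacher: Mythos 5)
Your proposal is correct and follows essentially the same route as the paper's own (very terse) proof: use Remark \ref{quotient-yoneda} for the factorization, the transitivity cofiber sequence for $R\to R\sslash M\to A$ together with Proposition \ref{cotangent-of-quotient} for the underlying object, and naturality of the universal derivation (plus the Yoneda-style compatibility $M\to I\to C[-1]\simeq g$ that you flag in the left triangle) to identify the arrow. You simply spell out the diagram chase that the paper compresses into one sentence.
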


\begin{proof}
    That $R\to A$ naturally factors through $R\sslash M$ is Remark \ref{quotient-yoneda}. The rest follows from the proposition and the cotangent fiber sequence of $R\to R\sslash M\to A$, since the universal derivation is functorial with respect to maps of $R$-algebras. 
\end{proof}

The following notion is central in the paper. 

\begin{definition}[$d$-smoothness]\label{def-smooth}
    Let $d$ be a natural number. A ring map $R\to A$ is \emph{$d$-smooth} if it is compact and $\L_{A/R}\in\Perf_{[0,d]}(A)$. A ring map is \emph{ind-$d$-smooth} if it is a filtered colimit of $d$-smooth ring maps. 
\end{definition}

\begin{remark}\label{property-smooth}
    Since the cotangent complex of a compact map is perfect, every compact map is $d$-smooth for some $d\in\N$. For any $d\in\N$ and any ring $R$, it is easy to see that:
    \begin{itemize}
        \item $d$-smooth maps are closed under base change and finite composition.
        \item Ind-$d$-smooth maps are closed under base change, composition, and filtered colimits. 
        \item Maps of $d$-smooth $R$-algebras are $(d+1)$-smooth. 
        \item Maps of ind-$d$-smooth $R$-algebras are ind-$(d+1)$-smooth. 
        \item For $M\in\Perf_{[0,d]}(R)$ with a map to $R$, $R\sslash M$ is $(d+1)$-smooth over $R$. 
    \end{itemize}
\end{remark}

To control $d$-smooth maps, we will use the following proposition in \cite{sag}, whose statement is reproduced below for the convenience of the reader: 

\begin{proposition}[{\cite[Proposition 25.3.6.1]{sag}}]\label{crucial-connectivity}
    Let $\varphi\colon R\to A$ be a ring map. The scalar extension $\cofib(\varphi)\otimes_RA\to\L_{A/R}$ of the universal derivation is: 
    \begin{itemize}
        \item surjective;
        \item $2$-connective, if $\varphi$ is surjective;
        \item $(c+3)$-connective, if $\varphi$ is $c$-connective for some $c\in\Z_+$. 
    \end{itemize}
\end{proposition}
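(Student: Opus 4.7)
The plan is to prove the three bullet points in turn. Surjectivity comes from a short check on $\pi_0$; the two connectivity bounds will follow from building $A$ over $R$ as a transfinite composition of square-zero extensions and applying Proposition~\ref{cotangent-of-quotient} inductively.

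For surjectivity, both $\cofib(\varphi)\otimes_R A$ and $\L_{A/R}$ are connective $A$-modules, so it suffices to check surjectivity on $\pi_0$. There the target identifies with the classical K\"ahler module $\Omega^1_{\pi_0 A/\pi_0 R}$, which is generated as a $\pi_0 A$-module by $\{da:a\in\pi_0 A\}$, and the comparison map sends $[a]\otimes 1\mapsto da$.

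For the connectivity claims, I would write $A=\colim_n B_n$ over $R$ as a Postnikov-style tower with $B_{n+1}=B_n\sslash M_n$ for suitably connective $B_n$-modules $M_n$: $B_0=R$ and $M_n$ roughly $(n+1)$-connective in the surjective case, and starting from $M_0$ already $(c+1)$-connective in the $c$-connective case. Both $\L_{-/R}$ and $\cofib(R\to-)$ are compatible with such sequential colimits after scalar extension to $A$, and the transitivity cofiber sequences for $R\to B_n\to B_{n+1}$ (both for cotangent complexes and for the relative cofibers) reduce the problem inductively to a single square-zero step $B\to B\sslash M$. For such a step, Proposition~\ref{cotangent-of-quotient} identifies $\L_{(B\sslash M)/B}$ with $M[1]\otimes_B(B\sslash M)$, while $\cofib(B\to B\sslash M)$ is extracted from the defining pushout; the fiber of the comparison map is then governed by the higher divided/symmetric powers $\LSym^{\geq 2}_B(M)$, whose connectivity is approximately twice that of $M$.

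The main obstacle is the careful bookkeeping of connectivity through this iteration, and in particular the asymmetry between the two cases. In the surjective case, the base step can have $\pi_1\L_{A/R}\cong I/I^2$ nonzero (where $I=\fib(\varphi)$), forcing the weaker $2$-connective bound. In the $c\ge 1$ case, all cells start in degree $\ge c+1$, and (combined with the quadratic connectivity gain from $\LSym^{\geq 2}$) the iterative connectivity estimate stabilizes at $c+3$.
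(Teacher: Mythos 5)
The paper does not prove this proposition; it is quoted verbatim from \cite[Proposition~25.3.6.1]{sag}, so there is no in-paper proof to compare against. I will therefore assess your sketch on its own.

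The overall strategy --- verify surjectivity on $\pi_0$, then build $A$ from $R$ as a transfinite composite of quotient-algebra cell attachments $B_{n+1}=B_n\sslash M_n$ and estimate connectivity one cell at a time --- is sound and is essentially the standard approach to this kind of statement, so the plan is reasonable. The $\pi_0$ computation for surjectivity is correct. However, the two connectivity bullets, as sketched, have genuine gaps that need to be closed before this counts as a proof.

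First, the induction from the cell tower to a single step is more delicate than ``the transitivity cofiber sequences reduce the problem.'' The cotangent complex has a clean transitivity triangle $\L_{B_n/R}\otimes_{B_n}A \to \L_{B_{n+1}/R}\otimes_{B_{n+1}}A \to \L_{B_{n+1}/B_n}\otimes_{B_{n+1}}A$, but the matching sequence for relative cofibers lands you in $\cofib(B_n\to B_{n+1})\otimes_R A$, whereas the single-cell comparison map naturally lives over $\cofib(B_n\to B_{n+1})\otimes_{B_n} A$. These differ by scalar extension along $B_n\otimes_R A\to A$, whose fiber contributes an extra term to $\fib(g_{n+1})$. This ``excess'' term is not part of the single-cell estimate. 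It does happen to behave --- it vanishes identically when $n=0$ because $B_0=R$ and $R\otimes_R A=A$, and for $n\geq 1$ its connectivity is at least $(\text{conn of }\cofib(B_n\to B_{n+1}))+(\text{conn of }\fib(B_n\otimes_R A\to A))\geq(c+n+1)+(c+1)$, which comfortably exceeds $c+3$ --- but you have to see this and check it; the argument does not reduce formally to one cell. Without this analysis the induction is not closed.

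Second, the connectivity claim ``$\LSym^{\geq 2}_B(M)$, whose connectivity is approximately twice that of $M$'' is stated imprecisely, and the imprecision matters. The functors $\LSym^n$ do \emph{not} increase connectivity: $\LSym^n$ of a $0$-connective module is still only $0$-connective. The doubling you need is a feature of the \emph{shift}. The filtration in question has graded pieces $\LSym^n_B(M[1])$ (in fact, since the second augmentation $M\otimes_R A\to A$ is canonically nullhomotopic once $A=R\sslash M$, you even get a splitting $\cofib(\varphi)\otimes_R A\simeq\bigoplus_{n\geq 1}\LSym^n_A(M[1]\otimes_R A)$), and the point is the d\'ecalage: $\LSym^n$ of an $(m+1)$-connective module is $n(m+1)$-connective, so the $n\geq 2$ pieces are $2(c+1)$-connective when $M$ is $c$-connective. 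That is exactly the bound you want ($2$ for $c=0$, at least $c+3$ for $c\geq 1$), but it comes from the shift, not from any connectivity gain of $\LSym^n$ on $M$ itself. As written, the justification for the key estimate is both wrong in general and under-explained in the case you actually need. Finally, a small bookkeeping slip: in the surjective case the first cell $M_0$ must be taken $0$-connective (to kill $\pi_0$ of the fiber), not $1$-connective.
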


\begin{corollary}\label{cotangent-connectivity}
    Let $c$ be a natural number and $\varphi\colon R\to A$ be a $c$-connective ring map. Then $\L_{A/R}$ is $(c+1)$-connective. The converse holds providing that $\varphi$ is a classically compact surjection and that there is no nontrivial idempotent $e\in R$ with $A$ an $R[e^{-1}]$-algebra; in particular, in this case if $\L_{A/R}=0$ then $A=R$. 
\end{corollary}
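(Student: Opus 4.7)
For the forward direction, I would apply Proposition \ref{crucial-connectivity} directly. Since $\varphi$ is $c$-connective, $\cofib(\varphi)$ is $(c+1)$-connective, and hence so is $\cofib(\varphi) \otimes_R A$. The relevant clause of the proposition (``$2$-connective'' for $c = 0$ or ``$(c+3)$-connective'' for $c \ge 1$) makes the map $\cofib(\varphi) \otimes_R A \to \L_{A/R}$ at least $(c+2)$-connective, which suffices to transfer the $(c+1)$-connectivity of the domain to the target.

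For the converse, I would induct on $c$, the base case $c = 0$ being precisely the surjectivity hypothesis on $\varphi$. In the inductive step, I first apply the inductive hypothesis to $\L_{A/R}$ (which is in particular $c$-connective) to get that $\varphi$ is $(c-1)$-connective, equivalently $\cofib(\varphi)$ is $c$-connective. Applying Proposition \ref{crucial-connectivity} again, the map $\cofib(\varphi) \otimes_R A \to \L_{A/R}$ is $2$-connective when $c = 1$ and $(c+2)$-connective when $c \ge 2$; either way it induces an isomorphism on $\pi_c$, and since $\pi_c \L_{A/R} = 0$, I conclude $\pi_c(\cofib(\varphi) \otimes_R A) = 0$.

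The crux is to upgrade this to $\pi_c \cofib(\varphi) = \pi_{c-1} \fib(\varphi) = 0$. For $c \ge 2$ the inductive hypothesis already provides $\pi_0 \varphi$ an isomorphism, so the Tor spectral sequence in its lowest nontrivial degree gives $\pi_c(\cofib(\varphi) \otimes_R A) = \pi_c \cofib(\varphi) \otimes_{\pi_0 R} \pi_0 R = \pi_c \cofib(\varphi)$, finishing this case. The case $c = 1$ is the main obstacle: here I only have $\pi_0 \varphi$ surjective with kernel $I \subseteq \pi_0 R$, and the long exact sequence of $\fib(\varphi)$ may make $\pi_0 \fib(\varphi)$ strictly larger than $I$ due to contributions from $\pi_1 A$. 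Nevertheless, the map $\pi_0 \fib(\varphi) \twoheadrightarrow I$ coming from the long exact sequence together with right exactness of $- \otimes_{\pi_0 R} \pi_0 R/I$ and the Tor identification $\pi_1(\cofib(\varphi) \otimes_R A) = \pi_0 \fib(\varphi) \otimes_{\pi_0 R} \pi_0 R/I$ gives a surjection onto $I/I^2$, forcing $I = I^2$. Classical compactness of $\varphi$ makes $I$ finitely generated, so the determinant trick yields an idempotent $e \in \pi_0 R$ with $I = (e)$. I would then lift $e$ uniquely to an idempotent of $R$ and note that $1 - e$ becomes a unit in $\pi_0 A$ (hence in $A$), so $R \to A$ factors through $R[1/(1-e)]$; the no-nontrivial-idempotent hypothesis then forces $e = 0$, giving $I = 0$. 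With $\pi_0 A = \pi_0 R$ now, the tensor identification specializes directly to $\pi_0 \fib(\varphi) = 0$, completing the induction.

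The ``in particular'' statement then follows immediately: $\L_{A/R} = 0$ is $(c+1)$-connective for every $c$, so the converse forces $\varphi$ to be $c$-connective for every $c$ and hence an isomorphism of animated rings.
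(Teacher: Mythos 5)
Your proof is correct and follows essentially the same route as the paper: the forward direction comes directly from the connectivity clauses in Proposition \ref{crucial-connectivity}, and the converse reduces to showing $\pi_0(A)=\pi_0(R)$ via the argument ``$I=I^2$, $I$ finitely generated, hence $I=(e)$ for an idempotent $e$, which the hypothesis forces to vanish,'' after which the Tor identification on the lowest nonvanishing homotopy group lets one bootstrap the connectivity of $\cofib(\varphi)$. Your version makes the inductive bootstrapping explicit, whereas the paper compresses it into the single claim that the map $\cofib(\varphi)\to\cofib(\varphi)\otimes_RA$ is an isomorphism on the lowest homotopy group once $\pi_0(A)=\pi_0(R)$ is known; the mathematical content is the same.
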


\begin{proof}
    The proposition implies that the cofiber of $\cofib(\varphi)\to\L_{A/R}$ is $(c+1)$-connective, so the $(c+1)$-connectivity of $\L_{A/R}$ is equivalent to that of $\cofib(\varphi)\to\L_{A/R}$. If $\varphi$ is $c$-connective, then $\cofib(\varphi)$ is $(c+1)$-connective, and so is $\cofib(\varphi)\otimes_RA$. Conversely if $\cofib(\varphi)\otimes_RA$ is $(c+1)$-connective, we want to see that so is $\cofib(\varphi)$ under the assumptions above. If $c=0$, there is nothing to prove, as we have assumed that $\varphi$ is surjective. If $c>0$, then since $\fib(\varphi)\otimes_RA$ is $c$-connective, we have $\pi_0(\fib(\pi_0(R)\to\pi_0(A))\otimes_{\pi_0(R)}\pi_0(A))=0$, so $\pi_0(A)$ is the quotient of $\pi_0(R)$ by a classically idempotent ideal. Now the assumptions force $\pi_0(A)=\pi_0(R)$, so the map $\cofib(\varphi)\to\cofib(\varphi)\otimes_RA$ induces an isomorphism on the lowest homotopy group, which implies that $\cofib(\varphi)$ is $(c+1)$-connective and hence $\varphi$ is $c$-connective. 
\end{proof}

\begin{lemma}\label{cellular-replacement}
    Let $R$ be a ring and $A$ be a compact $R$-algebra. Then there is a smooth ring map $A\to B$ admitting a retract, such that $\L_{B/R}$ is stably free. We can also replace the ``smooth'' in the previous sentence by ``$d$-connective $(d+1)$-smooth'' for any natural number $d$. 
\end{lemma}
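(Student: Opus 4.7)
The plan is to build $B$ as a shifted free $A$-algebra $\LSym_A(N)$, where $N$ is a finite projective $A$-module placed in degree $0$ (for the smooth statement) or in degree $d+1$ (for the $d$-connective $(d+1)$-smooth statement), with $N$ chosen so that $\L_{B/R}$ has trivial class modulo $\Z[B]$ in $\K_0(B)$. The canonical augmentation $\LSym_A(N) \to \LSym_A(0) = A$, sending the generating module to zero, will provide the retract.

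To set things up, note that $\L_{A/R}$ is perfect because $A$ is compact over $R$. By Proposition~\ref{projective-K0-comparison} I would write $[\L_{A/R}] = [X] - [Y]$ with $X, Y \in \cProj(A)$, and by Proposition~\ref{perfect-c-d}(\ref{retract-free}) choose $X', Y' \in \cProj(A)$ so that $X \oplus X'$ and $Y \oplus Y'$ are finite free over $A$. Then for any sign $\sigma \in \{+1, -1\}$, there exist $M \in \cProj(A)$ and $N \in \Z$ with $[\L_{A/R}] + \sigma[M] = N[A]$: take $M = Y \oplus X' \oplus A^{\oplus r}$ if $\sigma = +1$ and $M = X \oplus Y' \oplus A^{\oplus r}$ if $\sigma = -1$, for any sufficiently large $r$. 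For the smooth statement I set $\sigma = +1$ and $B = \LSym_A(M)$; then $\L_{B/A} = M \otimes_A B$ lies in $\cProj(B)$, so $A \to B$ is smooth, and $B$ is compact over $A$ because $\LSym_A$ preserves compact objects. For the general statement I set $\sigma = (-1)^{d+1}$ and $B = \LSym_A(M[d+1])$; then $\L_{B/A} = M[d+1] \otimes_A B$ lies in $\Perf_{[d+1, d+1]}(B) \subseteq \Perf_{[0, d+1]}(B)$, so $A \to B$ is $(d+1)$-smooth. For $d$-connectivity, $M[d+1]$ is $(d+1)$-connective and the standard estimate that $\LSym^k(M[d+1])$ lies in $\D_{\ge k(d+1)}(A)$ for $k \ge 1$ gives $\pi_i B = \pi_i A$ for $i \le d$.

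Finally, the cotangent fiber sequence $\L_{A/R} \otimes_A B \to \L_{B/R} \to \L_{B/A}$ yields
\[
    [\L_{B/R}] = [\L_{A/R} \otimes_A B] + \sigma[M \otimes_A B] = f^*\bigl([\L_{A/R}] + \sigma[M]\bigr) = N[B],
\]
where $f \colon A \to B$ is the unit, confirming that $\L_{B/R}$ is stably free. The only technical ingredient not already proved in the excerpt is the connectivity estimate for derived symmetric powers of a shifted module, which I would simply invoke as a standard consequence of the fact that tensor products and homotopy orbits preserve connectivity; this is the step that most requires care to state cleanly, but it presents no real obstacle.
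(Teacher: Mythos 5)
Your proposal is correct and follows essentially the same route as the paper's proof: take $M\in\cProj(A)$ with $M\oplus\L_{A/R}$ (resp.\ $M[d+1]\oplus\L_{A/R}$) stably free and set $B=\LSym_A(M)$ (resp.\ $\LSym_A(M[d+1])$), with the augmentation providing the retract. Your write-up is more explicit about the $\K_0$ bookkeeping and the sign $(-1)^{d+1}$, and correctly supplies the shift $M[d+1]$ that the paper's terse proof elides; the connectivity estimate for $\LSym^k$ of a $(d+1)$-connective module is indeed the standard fact needed and is likewise left implicit in the paper.
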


\begin{proof}
    By Proposition \ref{projective-K0-comparison}, there is some $M\in\cProj(A)$ such that $M\oplus\L_{A/R}$ is stably free. Then one can easily see that $B=\LSym_A(M)$ has the desired properties. Similarly, if one takes $M\in\cProj(A)$ with $M[d+1]\oplus\L_{A/R}$ stably free, then $\LSym_A(M)$ has the desired properties for the final claim. 
\end{proof}

\begin{lemma}\label{weak-quotient}
    Let $c$ be a natural number and $R\to A$ be a $c$-connective ring map with fiber $I$. Let $N\in\Perf_{[0,\min\{c,2\}]}(R)$ with a map $\bar{g}\colon N[c]\otimes_RA\to\L_{A/R}[-1]$. Then there is a map $g\colon N[c]\to I$ such that the natural $R$-algebra map $R\sslash N[c]\to A$ induces $\bar{g}[1]$ on cotangent complexes. 
\end{lemma}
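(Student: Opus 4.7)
The plan is to translate finding $g$ into an obstruction-theoretic lifting problem for $R$-module maps and verify the obstruction vanishes by tracking connectivities. By Remark~\ref{quotient-yoneda} and Corollary~\ref{cotangent-over-quotient}, giving $g$ determines an $R$-algebra map $R\sslash N[c]\to A$ whose induced cotangent map is the scalar extension of $N[c+1]\xrightarrow{g[1]}I[1]\to\L_{A/R}$, the second arrow being the universal derivation. Via the adjunction $\Map_A(N[c]\otimes_RA,\L_{A/R}[-1])\simeq\Map_R(N[c],\L_{A/R}[-1])$, demanding this equal $\bar{g}[1]$ amounts to finding $g$ such that the $R$-module composition $N[c]\xrightarrow{g}I\xrightarrow{\delta}\L_{A/R}[-1]$ equals the adjoint of $\bar g$, where $\delta\colon I\to\L_{A/R}[-1]$ is the shifted universal derivation as an $R$-module map. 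So the lemma reduces to lifting a specified $R$-module map along $\delta$; setting $F'=\fib(\delta)$, the obstruction lies in $\pi_{-1}\Map_R(N[c],F')$, and it suffices to show this group vanishes.

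To bound the connectivity of $F'$, I would factor the universal derivation as $I[1]\to I[1]\otimes_RA\to\L_{A/R}$. The first map is the unit of extension of scalars tensored with $I[1]$, with fiber $I\otimes_R I[1]$ which is $(2c+1)$-connective since $I$ is $c$-connective. The second map is the scalar extension of the universal derivation, with fiber that is $2$-connective for $c=0$ and $(c+3)$-connective for $c\geq 1$ by Proposition~\ref{crucial-connectivity}. Assembling in the fiber sequence $I\otimes_R I[1]\to F'[1]\to F$, one finds that $F'$ is $0$-connective for $c=0$, $2$-connective for $c=1$, and $(c+2)$-connective for $c\geq 2$.

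Next, Proposition~\ref{perfect-c-d}(\ref{extension-generation}) identifies $\Perf_{[0,\min\{c,2\}]}(R)$ with the smallest extension-closed subcategory containing $P[j]$ for $P\in\cProj(R)$ and $j\in[0,\min\{c,2\}]$. The long exact sequence obtained from applying $\pi_*\Map_R(-,F')$ to a cofiber sequence $N'\to N\to N''$ gives $\pi_{-1}\Map_R(N[c],F')=0$ whenever the analogous groups for $N'$ and $N''$ vanish, so it suffices to check the generators; since any finite projective is a retract of a finite free module, it further reduces to $N=R[j]$. In that case $\Map_R(R[c+j],F')=F'[-c-j]$, so $\pi_{-1}$ is $\pi_{c+j-1}(F')$, and the connectivity bounds above show this vanishes for all $j\in[0,\min\{c,2\}]$ and all $c\geq 0$.

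The main obstacle I anticipate is the connectivity calculation for $F'$: Proposition~\ref{crucial-connectivity} only controls the scalar extension of the universal derivation, so one must account separately for the term $I\otimes_R I[1]$ measuring the difference between the $R$-module map $\delta$ and its base change. The $c$-connectivity of $R\to A$ is essential here, and the upper bound $\min\{c,2\}$ on the cells of $N$ is exactly what is needed to match the resulting connectivity of $F'$.
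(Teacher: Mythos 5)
Your proposal is correct and follows essentially the same argument as the paper's proof: both reduce, via Proposition~\ref{cotangent-of-quotient} and Corollary~\ref{cotangent-over-quotient}, to lifting the adjoint of $\bar g$ along the $R$-module map $I\to\L_{A/R}[-1]$, and both control the obstruction by combining the $2c$-connectivity of $I\to I\otimes_RA$ (since $I\otimes_RI$ is $2c$-connective) with the bound from Proposition~\ref{crucial-connectivity} on the scalar-extended universal derivation, yielding precisely the connectivity needed to match $N\in\Perf_{[0,\min\{c,2\}]}(R)$. The only cosmetic difference is that you reduce $N$ to shifted free modules through the extension-generation statement, whereas the paper invokes the defining property of $\Perf_{\le d}$ directly; the computation is identical.
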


\begin{proof}
    By Proposition \ref{cotangent-of-quotient}, it suffices to produce a dashed arrow making
    $$\begin{tikzcd}
        N[c]\ar[d,dashed,"g"']\ar[rr]&&N[c]\otimes_RA\ar[d,"\bar{g}"]\\
        I\ar[r]&I\otimes_RA\ar[r]&\L_{A/R}[-1]
    \end{tikzcd}$$
    commute. Since both $I$ and $R\to A$ are $c$-connective, the lower left map is $2c$-connective; by Proposition \ref{crucial-connectivity}, the lower right map is $(c+2)$-connective when $c>0$ and $1$-connective when $c=0$; so the cofiber of their composition is $(c+1+\min\{c,2\})$-connective, and thus the map from $N[c]$ to the cofiber has a nullhomotopy, giving the desired dashed arrow. 
\end{proof}

The following theorem is the analog of \cite[Theorem 7.4.3.18]{ha} in the setting of animated rings, saying that a classically compact ring map with a perfect cotangent complex is compact. 

\begin{theorem}\label{factorize-smooth}
    Let $c$ and $d$ be natural numbers. A ring map $R\to A$ is $d$-smooth if and only if it is classically compact and $\L_{A/R}\in\Perf_{[0,d]}(A)$. Assume that it is $c$-connective, that $d>0$, and if $c=0$ and $d=1$ assume further that $\L_{A/R}[-1]\in\cProj(A)$ can be lifted to $\cProj(R)$. Then there is a factorization
    $$R=A_c\to A_{c+1}\to\cdots\to A_d=A,$$
    where $A_{i+1}=A_i\sslash M_i[i]$ for some $M_i\in\cProj(A_i)$ and some map $M_i[i]\to A_i$. One can arrange so that each $M_i$ except $M_{d-1}$ is free; if $c<d-1$ and $\L_{A/R}$ is stably free, then one can arrange so that each $M_i$ is free.
\end{theorem}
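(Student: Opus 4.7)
My plan is to build the factorization by induction on $i$ from $c$ to $d$, with invariants that $A_i \to A$ is $i$-connective and $\L_{A/A_i} \in \Perf_{[i+1, d]}(A)$ (interpreting $\Perf_{[d+1, d]}$ as $0$). The base case $i = c$ follows from the hypothesis and Corollary~\ref{cotangent-connectivity}. At the inductive step, I use Proposition~\ref{perfect-c-d}(\ref{one-step-free-resolution}) to produce some $\bar{P} \in \cProj(A)$ and a map $\bar{P}[i+1] \to \L_{A/A_i}$ whose cofiber lies in $\Perf_{[i+2, d]}(A)$, except at the final step $i = d-1$ where $\L_{A/A_{d-1}}$ is already of the form $P[d]$ and I take $\bar{P} = P$ with the map an isomorphism. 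The main technical move is to lift $\bar{P}$ to $M_i \in \cProj(A_i)$ via Proposition~\ref{lift-module}: for $i \geq 1$ the $i$-connectivity of $A_i \to A$ forces $\pi_0(A_i) = \pi_0(A)$ and hence $\K_0(A_i) = \K_0(A)$, so the $\K_0$-class of $\bar{P}$ always lifts; for $i = 0$ (possible only when $c = 0$) I exploit $1$-connectivity of $\L_{A/R}$ from Corollary~\ref{cotangent-connectivity} to choose $\bar{P} = A^n$ free at non-final steps, with $n$ the number of generators of $\pi_1(\L_{A/R})$ over $\pi_0(A)$, while at the final step in the $c = 0, d = 1$ case I invoke the additional lifting hypothesis. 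Lemma~\ref{weak-quotient} then supplies a map $M_i[i] \to \fib(A_i \to A)$ and a compatible factoring $A_{i+1} := A_i \sslash M_i[i] \to A$; Proposition~\ref{cotangent-of-quotient} together with the cotangent triangle identifies $\L_{A/A_{i+1}}$ with the cofiber, restoring the invariant and upgrading the connectivity by Corollary~\ref{cotangent-connectivity}.

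Closing the induction at $i = d$ is the most delicate point. After step $d-1$ I have $\L_{A/A_d} = 0$ and $A_d \to A$ is $(d-1)$-connective; since $d \geq 1$, this is a surjection inducing an isomorphism on $\pi_0$, which forces the idempotent clause of Corollary~\ref{cotangent-connectivity} to hold trivially. The map is also classically compact, since each stage $A_i$ was built as a pushout involving $\LSym$ applied to a perfect module and so is classically compact over $R$. Therefore, applying the converse clause of Corollary~\ref{cotangent-connectivity} once for each $c \geq d-1$, I conclude $A_d \to A$ is $\infty$-connective, hence an isomorphism.

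For the freeness improvements, at each non-final step I use Proposition~\ref{perfect-c-d}(\ref{retract-free}) to find $M'_i \in \cProj(A_i)$ with $M_i \oplus M'_i$ finite free, and replace $M_i$ by this direct sum with the chosen map extended by zero on $M'_i$; the cofiber gains $M'_i \otimes_{A_i} A[i+2]$, which remains in $\Perf_{[i+2, d]}(A)$ precisely because $i \leq d-2$ at non-final steps. For the sharper statement under $c < d-1$ and $\L_{A/R}$ stably free, a $\K_0$-theoretic bookkeeping along the cotangent triangles shows that the $\bar{P}$ arising at step $d-1$ is stably free; using the slack at step $d-2$ (which exists because $c < d-1$), I prepend an appropriate free summand to $M_{d-2}$ so that the stably-free-to-free defect is absorbed and the resulting $M_{d-1}$ comes out actually free.

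For the equivalence, the forward direction is immediate since $\pi_0$ is a left adjoint and preserves compact objects. For the reverse, I combine Lemma~\ref{cellular-replacement} with a classical-compactness surjection $R[x_1, \ldots, x_n] \to A$ to replace $(R, A)$ by $(R' := R[x_1, \ldots, x_n], B)$ where $A \to B$ is smooth with a retract to $A$ and $\L_{B/R'}$ is stably free and lies in $\Perf_{[0, d+1]}(B)$. The factorization hypothesis for $R' \to B$ is met because it is $0$-connective and, in the critical $d = 0$ case, $\L_{B/R'}[-1] \in \cProj(B)$ is stably free and therefore its $\K_0$-class lifts to $\cProj(R')$. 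The resulting factorization exhibits $B$ as a finite iteration of compact quotient algebras over $R'$, so $B$ is compact over $R'$ and hence over $R$; taking retracts gives the desired compactness of $A$ over $R$.
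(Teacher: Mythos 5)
Your overall strategy mirrors the paper's: both peel off one quotient algebra $A_i \sslash M_i[i]$ at a time, raising the connectivity of the map to $A$ by one at each step and using Lemma~\ref{weak-quotient}, Proposition~\ref{cotangent-of-quotient}, Corollary~\ref{cotangent-over-quotient}, and Corollary~\ref{cotangent-connectivity} to propagate the invariants; the paper phrases this as downward induction on $c$, you phrase it as upward induction on $i$, but these are the same argument. Your treatment of the freeness refinements (padding $M_i$ by a complementary projective with the zero map, absorbing the stably-free defect at step $d-2$) is also correct and essentially what the paper does via Proposition~\ref{projective-K0-comparison}.

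There is, however, a concrete gap around the idempotent clause of Corollary~\ref{cotangent-connectivity}. Each time you invoke the converse direction of that corollary you must verify that there is no nontrivial idempotent $e$ with $A$ an $A_i[e^{-1}]$-algebra. When $c=0$, this is not automatic for the first step $R \to A_1$ (nor for the terminal step when $d=1$): $\pi_0(A_1)=\pi_0(R)/(\text{image of }\pi_0 M_0)$ need not equal $\pi_0(A)$, since the cotangent condition only forces the image to generate $J=\ker(\pi_0 R\to\pi_0 A)$ modulo $J^2$, not $J$ itself. Your claim that ``since $d\ge 1$, this is a surjection inducing an isomorphism on $\pi_0$'' is false for $d=1$, where $(d-1)$-connective means merely surjective. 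The paper explicitly reduces to the case where $R$ has no such idempotent before running the induction; you need to insert the analogous reduction (or verify the hypothesis at each step). Separately, your reduction of the ``if and only if'' part to the factorization part is problematic: you invoke Lemma~\ref{cellular-replacement}, but that lemma is stated for a compact $R$-algebra, which is precisely what you are trying to prove, so as written the argument is circular; moreover the resulting $R' \to B=\LSym_A(M)$ is not surjective (its $\pi_0$ gains polynomial generators), so the $0$-connectivity hypothesis of the factorization statement fails. The paper's simpler route — pass to a polynomial $R$-algebra $P$ surjecting onto $A$, note $\L_{A/P}\in\Perf_{[0,d']}(A)$ for some $d'\ge 2$, and apply the second claim to $P\to A$ with that $d'$ (where the $c=0,\,d=1$ side hypothesis is not needed) — avoids both issues.
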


\begin{proof}
    To prove the first claim, by definition we only need to show that a classically compact ring map with a perfect cotangent complex is compact. Therefore, replacing $R$ by its finitely generated polynomial algebra, we can assume that $R\to A$ is surjective, which reduces the first claim to the second claim. Moreover, we can assume that there is no nontrivial idempotent $e\in R$ with $A$ an $R[e^{-1}]$-algebra, by passing to an idempotent quotient of $R$. 

    Now do downward induction on $c$. If $c=d$, by Remark \ref{property-smooth}, $\L_{A/R}$ is trivial, so by Corollary \ref{cotangent-connectivity} we have $A=R$. If $c=d-1$, we can take $N\in\cProj(R)$, an isomorphism $N[c]\otimes_RA\to\L_{A/R}[-1]$, and apply Lemma \ref{weak-quotient}. Then the resulting map $R\sslash N[c]\to A$ is an isomorphism by Corollary \ref{cotangent-of-quotient} and Corollary \ref{cotangent-connectivity}, which implies the claim. If $c<d-1$, by Proposition \ref{perfect-c-d}(\ref{one-step-free-resolution}) there is a finite free $R$-module $N$ and a map $N[c+1]\otimes_RA\to\L_{A/R}$ with cofiber in $\Perf_{[c+2,d]}(A)$, and if $c=d-2$, by Proposition \ref{projective-K0-comparison} we can make this cofiber a translation of a free module. Then the claim follows from Lemma \ref{weak-quotient} and Corollary \ref{cotangent-of-quotient} by induction. 
\end{proof}

\begin{corollary}\label{localization-extending}
    Let $R$ be a ring, $S\subseteq R$ be a multiplicative submonoid, and $A'$ be a compact $S^{-1}R$-algebra with $\L_{A'/S^{-1}R}$ stably free. Then there is a compact $R$-algebra $A$ with $\L_{A/R}$ stably free and $S^{-1}A=A'$. 
\end{corollary}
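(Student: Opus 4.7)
The plan is to apply Theorem \ref{factorize-smooth} to present $A'$ as a successive quotient by free modules, and then lift each such quotient step by step back to $R$. Since $A'$ is compact, $\pi_0(A')$ is finitely generated over $S^{-1}\pi_0(R)$, so by replacing $R$ with a polynomial extension $R' = R[x_1,\ldots,x_n]$ I may assume that $S^{-1}R' \to A'$ is surjective. The cotangent fiber sequence for $S^{-1}R \to S^{-1}R' \to A'$ exhibits $\L_{A'/S^{-1}R'}$ as the cofiber of $(A')^n \to \L_{A'/S^{-1}R}$, so it remains stably free. Compactness gives $d$-smoothness for some $d$ (Remark \ref{property-smooth}); padding, take $d \geq 2$ so that $c = 0 < d-1$, and Theorem \ref{factorize-smooth} yields a factorization
$$S^{-1}R' = A'_0 \to A'_1 \to \cdots \to A'_d = A',\qquad A'_{i+1} = A'_i \sslash (M'_i, f'_i),$$
with each $M'_i$ a finite free $A'_i$-module of some rank $n_i$.

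Inductively I will construct $R' = A_0 \to A_1 \to \cdots \to A_d = A$ together with $S^{-1}R'$-algebra equivalences $S^{-1}A_i \simeq A'_i$. Given $A_i$ with $S^{-1}A_i \simeq A'_i$, the map $f'_i$ corresponds to elements $\alpha_1, \ldots, \alpha_{n_i} \in \pi_i(A'_i) = S^{-1}\pi_i(A_i)$; writing $\alpha_j = \beta_j/s_j$ with $\beta_j \in \pi_i(A_i)$ and $s_j \in S$, let $f_i \colon A_i^{n_i}[i] \to A_i$ be the map corresponding to $(\beta_1,\ldots,\beta_{n_i})$ and set $A_{i+1} = A_i \sslash (A_i^{n_i}, f_i)$. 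Base change (Remark \ref{quotient-functoriality}) gives $S^{-1}A_{i+1} \simeq A'_i \sslash (M'_i, S^{-1}f_i)$, and the diagonal $A'_i$-linear automorphism of $M'_i$ by the units $(s_1,\ldots,s_{n_i})$ furnishes an isomorphism $(M'_i, S^{-1}f_i) \xrightarrow{\sim} (M'_i, f'_i)$ in the slice category of $A'_i$-modules over $A'_i$, yielding $S^{-1}A_{i+1} \simeq A'_{i+1}$ by the functoriality of $\sslash$.

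For the final algebra $A = A_d$, iterating Proposition \ref{cotangent-of-quotient} presents $\L_{A/R'}$ as a finite iterated extension of free $A$-modules, hence stably free; the cotangent fiber sequence for $R \to R' \to A$ adds a free summand and keeps $\L_{A/R}$ stably free. Compactness over $R$ is clear because $A$ is a finite iterated pushout from $R'$, which is itself compact over $R$. The main obstacle is the lifting step: the localization $\pi_i(A_i) \to \pi_i(A'_i)$ is generally not surjective, so one cannot lift $f'_i$ on the nose. The key is that multiplying $f'_i$ by units of $S$ — realized as an $A'_i$-linear automorphism of $M'_i$ — leaves the quotient algebra unchanged \emph{precisely because} $M'_i$ is free, which is why we pay the cost of enforcing $c < d - 1$ and stable freeness at the outset in order to apply the sharper form of Theorem \ref{factorize-smooth}.
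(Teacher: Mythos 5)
Your proof is correct and follows essentially the same strategy as the paper: pass to a polynomial extension $R'$ to make $S^{-1}R'\to A'$ surjective, invoke Theorem \ref{factorize-smooth} with $c=0<d-1$ and stable freeness to write $A'$ as a tower of quotients by \emph{free} modules, and lift each attaching map $f'_i$ by clearing denominators (using $\pi_i(S^{-1}A_i)=S^{-1}\pi_i(A_i)$). The only cosmetic deviation is that you allow a separate $s_j\in S$ per coordinate and absorb them with a diagonal automorphism of the free module, whereas the paper phrases it as clearing a single common denominator; you are also somewhat more explicit than the paper in verifying the stable freeness of $\L_{A/R}$ at the end, but the substance is identical.
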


\begin{proof}
    Take integer $d>1$ such that $A'$ is $d$-smooth over $S^{-1}R$. By the theorem, there is a factorization
    $$S^{-1}R\to A'_0\to A'_1\to\cdots\to A'_d=A',$$
    where $A'_0$ is a finitely generated polynomial $S^{-1}R$-algebra, and $A'_{i+1}=A'_i\sslash A'_i[i]^{\oplus r}$ for some $r\in\N$ and some map $A'_i[i]^{\oplus r}\to A'_i$. Now it suffices to lift each individual step of the factorization. For $S^{-1}R\to A'_0$ this is trivial. For other steps, it suffices to lift the module map $A'_i[i]^{\oplus r}\to A'_i$ to $A_i$, up to multiplication by an element in $S$. This is also trivial because 
    $$\map_{A'}(A'_i[i]^{\oplus r},A'_i)=A'_i[-i]^{\oplus r}=S^{-1}A_i[-i]^{\oplus r}=S^{-1}\map_A(A_i[i]^{\oplus r},A_i),$$
    and $S^{-1}(-)$, as a filtered colimit, commutes with taking $\pi_0$. 
\end{proof}

The following theorem shows that we can do better for a compact surjection with cotangent complex in a small range, writing it as just one quotient. 

\begin{theorem}\label{strong-quotient}
    Let $c$ be a natural number and $R\to A$ be a $c$-connective $(c+2+\min\{c,1\})$-smooth ring map. If $c=0$, assume that the $\K_0$ class of $\L_{A/R}$ can be lifted to $\K_0(R)$. Then there is an $M\in\Perf_{[0,1+\min\{c,1\}]}(R)$ with a map $f\colon M[c]\to R$ such that $A=R\sslash M[c]$. 
\end{theorem}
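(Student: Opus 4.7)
The plan is to construct $M\in\Perf_{[0,1+\min\{c,1\}]}(R)$ together with a map $f\colon M[c]\to R$ whose composition with $R\to A$ is nullhomotopic; the chosen nullhomotopy gives an $R$-algebra map $R\sslash M[c]\to A$ via Remark \ref{quotient-yoneda} which I will show is an isomorphism. To verify it is an isomorphism, I will arrange that the map $M[c+1]\otimes_R A\to\L_{A/R}$ induced by the lift $g\colon M[c]\to I:=\fib(R\to A)$ of $f$ composed with the universal derivation $C\otimes_R A\to\L_{A/R}$ (where $C:=\cofib(R\to A)$) is itself an isomorphism. Then Corollary \ref{cotangent-over-quotient} gives $\L_{A/(R\sslash M[c])}=0$, and after reducing as in the proof of Theorem \ref{factorize-smooth} to the case of no nontrivial idempotent $e\in R$ making $A$ an $R[e^{-1}]$-algebra, iterated application of the converse direction of Corollary \ref{cotangent-connectivity} promotes the $c$-connectivity of $R\sslash M[c]\to A$ to the desired isomorphism.

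For the module $M$, Corollary \ref{cotangent-connectivity} together with the smoothness hypothesis gives $\L_{A/R}[-c-1]\in\Perf_{[0,1+\min\{c,1\}]}(A)$. For $c=0$ the assumed $\K_0$-lift of $[\L_{A/R}]$ combined with Proposition \ref{lift-module} produces a lift $M\in\Perf_{[0,1]}(R)$. For $c\geq 1$ we have $\pi_0(R)=\pi_0(A)$, so Proposition \ref{projective-K0-comparison} makes $\K_0(R)\to\K_0(A)$ a bijection and the $\K_0$ lift is automatic; Proposition \ref{lift-module} furnishes a lift in the a priori larger $\Perf_{[0,c+1]}(R)$, which can be trimmed down to $\Perf_{[0,2]}(R)$ by iteratively peeling off free summands via Proposition \ref{perfect-c-d}(\ref{one-step-free-resolution}) and using $\cProj(R)\cong\cProj(\pi_0 R)\cong\cProj(A)$.

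For the map $g$, when $c\geq 2$ the equality $1+\min\{c,1\}=2=\min\{c,2\}$ puts $M$ inside Lemma \ref{weak-quotient}'s hypothesis, and the lemma produces $g$ directly from the shifted iso $\bar g\colon M[c]\otimes_R A\xrightarrow{\sim}\L_{A/R}[-1]$, with the induced cotangent map being the shifted original iso. For $c=0,1$, $M$ lies one homotopy degree past Lemma \ref{weak-quotient}'s range, and I expect to argue by hand via a two-step construction: first lift the shifted iso $\phi[c+1]\colon M[c+1]\otimes_R A\xrightarrow{\sim}\L_{A/R}$ through the universal derivation to an $A$-linear map $M[c+1]\otimes_R A\to C\otimes_R A$ (possible since Proposition \ref{crucial-connectivity} makes the derivation's cofiber sufficiently connective that a tor-amplitude calculation kills the $\pi_0$-obstruction), then descend this to an $R$-linear map $M[c+1]\to C$, i.e., the desired $g\colon M[c]\to I$. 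The descent obstruction lies in $\pi_0(\map_R(M[c+1],C\otimes_R C))$, and for $c=0,1$ does not vanish purely on connectivity grounds; killing it will require exploiting the combined flexibility of choosing $M$ within its $\K_0$-prescribed lift class and adjusting the lift of $\phi[c+1]$. This last cancellation for $c=0$, where the $\K_0$ hypothesis must be leveraged nontrivially to ensure the obstruction is trivialized, is the hardest step I anticipate.
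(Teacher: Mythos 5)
Your sketch for $c\geq 2$ matches the paper: $1+\min\{c,1\}=2=\min\{c,2\}$, so Lemma \ref{weak-quotient} applies directly once you have a lift $M\in\Perf_{[0,2]}(R)$ of $\L_{A/R}[-c-1]$, and the final claim of Corollary \ref{cotangent-connectivity} finishes.

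The genuine gap is exactly the one you identify for $c\leq 1$: $M$ sits one homotopy degree beyond Lemma \ref{weak-quotient}'s range, and the obstruction to lifting the isomorphism $M[c+1]\otimes_R A\xrightarrow{\sim}\L_{A/R}$ to an $R$-linear map $M[c+1]\to\cofib(R\to A)$ does not vanish for connectivity reasons. You hope to kill it by varying $M$ within its $\K_0$ class and adjusting the lift, but you give no mechanism for this cancellation, and it is not clear such a mechanism exists in this direct form. The paper circumvents the obstruction entirely by a different device: it \emph{iterates} the quotient construction rather than producing $M$ in one pass. One first chooses a cell decomposition $N[c]\otimes_RA\to\L_{A/R}[-1]$ with $N\in\Perf_{[0,c]}(R)$ (so $N$ is within Lemma \ref{weak-quotient}'s range at connectivity level $c$) whose cofiber, after a shift, is a finite projective $\bar K\in\cProj(A)$ liftable to $K\in\cProj(R)$ --- this is where the $\K_0$ hypothesis enters for $c=0$. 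Quotienting by $N[c]$ produces $R\sslash N[c]\to A$, which is now $(2c+1)$-connective, and $K[2c+1]$ falls within Lemma \ref{weak-quotient}'s range at the improved connectivity level, yielding $A=(R\sslash N[c])\sslash K[2c+1]$. The remaining work --- which is the real content and absent from your proposal --- is a splicing step: one shows via a connectivity estimate (using the final claim of Proposition \ref{cotangent-of-quotient} and Proposition \ref{crucial-connectivity} for $c=1$, and surjectivity of $N\to J$ for $c=0$) that the structure map $K[2c+1]\to R\sslash N[c]$ lifts along $N[c+1]\to J[1]$, producing a map $K[c]\to N$ whose cofiber $M$ has $R\sslash M[c]=(R\sslash N[c])\sslash K[2c+1]=A$. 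So the paper's idea is to decompose the quotient into two applications of the lemma at successively higher connectivity, not to enlarge the lemma's range or cancel the obstruction you wrote down.
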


\begin{proof}
    Without loss of generality, we can assume that there is no nontrivial idempotent $e\in R$ with $A$ an $R[e^{-1}]$-algebra. Set $I=\fib(R\to A)$. By Definition \ref{def-smooth} and Corollary \ref{cotangent-connectivity}, $\L_{A/R}[-1]\in\Perf_{[c,c+1+\min\{c,1\}]}(A)$. When $c>1$, by Proposition \ref{lift-module} we can take $M\in\Perf_{[0,2]}(R)$ such that $M[c]\otimes_RA=\L_{A/R}[-1]$. Take $g\colon M[c]\to I$ as in Lemma \ref{weak-quotient}; then the final claim of Corollary \ref{cotangent-connectivity} implies that $A=R\sslash M[c]$ and we are done. When $c\le1$, take $N\in\Perf_{[0,c]}(R)$ and $\bar{g}\colon N[c]\otimes_RA\to\L_{A/R}[-1]$ such that $\bar{K}=\cofib(\bar{g})[-2c-1]\in\cProj(A)$ can be lifted to some $K\in\cProj(R)$. Take $g\colon N[c]\to I$ as in Lemma \ref{weak-quotient}; then
    $$\L_{A/(R\sslash N[c])}=\cofib(\bar{g}[1])=\bar{K}[2c+2].$$
    Use Lemma \ref{weak-quotient} again, now with $R\sslash N[c]\to A$ and $K$, to get a map $h\colon K[2c+1]\to R\sslash N[c]$. The final claim of Corollary \ref{cotangent-connectivity} implies that $A=(R\sslash N[c])\sslash K[2c+1]$; to present $A$ as one quotient instead of two, we have to splice $K$ and $N$ together. So set $J=\fib(R\to R\sslash N[c])$ and look at the cofiber $C$ of the shift of the natural map $N[c]\to J$ as in the diagram
    $$\begin{tikzcd}
        K[2c+1]\ar[d,dashed]\ar[r,"h"]&R\sslash N[c]\ar[d]\\
        N[c+1]\ar[r]&J[1]\ar[r]&C
    \end{tikzcd}$$
    When $c=1$, by the final claim of Proposition \ref{cotangent-of-quotient} and Proposition \ref{crucial-connectivity}, $C\otimes_R(R\sslash N[1])$ is $4$-connective, so $C$ must also be $4$-connective. When $c=0$, since $N\to J$ is surjective, $C=\cofib(N[1]\to J[1])$ is $2$-connective. In either case the map from $K[2c+1]$ to $C$ has a nullhomotopy, and we can fill in the dash arrow. Now let $M=\cofib(K[c]\to N)$ be the cofiber of a shift of the dashed arrow. Taking vertical fibers gives a map $f\colon M[c]\to R$, and it is easy to see that $R\sslash M[c]=(R\sslash N[c])\sslash K[2c+1]$ by taking quotient algebras horizontally in the following diagram and computing cotangent complexes. 
    $$\begin{tikzcd}
        M[c]\ar[r,"f"]\ar[d]&R\ar[d]\\
        K[2c+1]\ar[r,"h"']&R\sslash N[c]
    \end{tikzcd}$$
\end{proof}

\subsection{Blowup algebras}
We also need the basic theory of the animated analog of blowup algebras, also known as Rees algebras (cf.\ \cite[\texttt{052P}]{stacks}). Throughout the subsection, let $R\to A$ be a surjective ring map, and let $R[t]$ be the graded polynomial algebra with $\deg(t)=1$. The following is \cite[Proposition 13.3]{complex}, up to the obvious equivalence between filtered $R$-algebras and graded $R[t]$-algebras. 

\begin{proposition}[cf.\ {\cite[Corollary 3.54]{mao-crystalline}}]\label{rees}
    There is an initial $\Z$-graded $R[t]$-algebra $\R_{A/R}$, whose reduction modulo $t$ is an $A$-algebra. Moreover, this reduction is $\LSym_A(\L_{A/R}[-1])$ with $\L_{A/R}[-1]$ on grade $-1$. The formation of $\R_{A/R}$ is functorial with respect to $R\to A$ and commutes with base change in $R$. If $r\in R$ and $A=R/r$, then $\R_{A/R}=R[t,u]/(tu-r)$ with $u$ on grade $-1$, and the image of $u$ under reduction modulo $t$ is the generator $\d r\in\L_{(R/r)/R}[-1]$. 
\end{proposition}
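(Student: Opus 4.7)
The plan is to reduce the statement to \cite[Proposition 13.3]{complex} via the standard equivalence between graded $R[t]$-algebras and filtered $R$-algebras (a graded algebra $B^\bullet$ corresponds to its descending filtration $F^n := B^{\ge n}$, with multiplication by $t$ realizing the inclusions; reduction modulo $t$ corresponds to passing to the associated graded). Under this translation, the universal property in question becomes that of the initial filtered $R$-algebra whose associated graded is an $A$-algebra, which is exactly the derived Rees / deformation-to-the-normal-cone construction of the cited reference. Existence follows from presentability of the $\infty$-category of pairs $(B, A \to B/t)$ (a comma-type category over the presentable category of graded $R[t]$-algebras), and functoriality in $R \to A$ together with base change along $R \to R'$ are immediate from the universal property.

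For the identification $\R_{A/R}/t \simeq \LSym_A(\L_{A/R}[-1])$ with $\L_{A/R}[-1]$ in grade $-1$, I would invoke \cite[Proposition 13.3]{complex} or \cite[Corollary 3.54]{mao-crystalline}. The heuristic is as follows: $\R_{A/R}/t$ is naturally a graded $A$-algebra by applying the universal property with $B = \R_{A/R}/t$ itself, and its grade $-1$ part arises from the universal generator $u$ satisfying $tu = i$ for $i \in I := \fib(R \to A)$. Setting $t = 0$ kills the grade-$0$ image of $I$ (precisely corresponding to the quotient $R \to A$), leaving a graded $A$-algebra freely generated in grade $-1$ by a module that one identifies with $\L_{A/R}[-1]$; this identification---as opposed to a mere map from $I \otimes_R A$---uses the $2$-connective surjection $\cofib(R\to A) \otimes_R A \to \L_{A/R}$ of Proposition \ref{crucial-connectivity}.

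For the special case $A = R/r$, the universal property unwinds directly: giving an $R$-algebra map $R/r \to (B/t)_0$ is the same as producing a nullhomotopy of $r \in B_0$ modulo $tB_{-1}$, i.e., a lift $u \in B_{-1}$ with $tu = r$. This data is exactly corepresented by $R[t,u]/(tu-r)$ with $u$ in grade $-1$. Its reduction modulo $t$ is $A[u]$, and by Proposition \ref{cotangent-of-quotient} we have $\L_{(R/r)/R} = (R/r)\cdot\d r[1]$, so $\LSym_A(\L_{A/R}[-1]) = A[\d r]$; tracing the universal property gives $u \leftrightarrow \d r$.

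The main obstacle is precisely the computation of the reduction modulo $t$: the naive guess that the grade $-1$ part of $\R_{A/R}/t$ is $I \otimes_R A$ only $2$-connectively approximates $\L_{A/R}[-1]$, so the correct answer requires carefully tracking the higher animated structure in the pushouts defining $\R_{A/R}$, which is the technical content of the cited references.
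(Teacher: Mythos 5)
Your approach matches the paper's: Proposition \ref{rees} is given no proof beyond the one-sentence remark preceding it that it is \cite[Proposition 13.3]{complex} up to the equivalence between filtered $R$-algebras and graded $R[t]$-algebras, which is exactly the reduction you carry out. One small slip: your description of that equivalence has the indexing off — with $\deg(t)=1$, the filtration attached to a graded $R[t]$-algebra $B$ is $F^n := B_{-n}$ with $t$ supplying the transition maps $F^{n+1}\to F^n$, not $B^{\ge n}$ — but this is harmless since the equivalence is cited anyway, and your heuristic for the reduction mod $t$ and your hands-on verification of the $A=R/r$ case are both correct and are a useful supplement to the paper's terse citation.
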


\begin{definition}[Blowup algebras]\label{blowup-algebra}\ 
    \begin{itemize}
        \item The \emph{blowup algebra}, or the \emph{Rees algebra}, associated to the ring map $R\to A$, is the graded $R[t]$-algebra $\R_{A/R}$ appearing in Proposition \ref{rees}. 
        \item Let $r\in\fib(R\to A)$. Then functoriality gives a map $R[t,u]/(tu-r)\to\R_{A/R}$. The \emph{affine blowup algebra} of $R\to A$ with respect to $r$ is the \th{$0$} grade part of the base change of $\R_{A/R}$ along $R[t,u]/(tu-r)\to R[u^{\pm}]$. 
    \end{itemize}
\end{definition}

Note that the Rees algebra here is usually called the extended Rees algebra and is different from the one in \cite[\texttt{052Q}]{stacks}. From now on till the end of the subsection, let $r\in\fib(R\to A)$ and $B$ be the affine blowup algebra of $R\to A$ with respect to $r$. 

\begin{remark}\label{outside-exceptional}
    After inverting $r$, the maps $R/r\to A\to0$ become isomorphisms. By functoriality, so do the maps $R[t,u]/(tu-r)\to\R_{A/R}\to R[t^{\pm}]$, and hence so does the map $R\to B$. 
\end{remark}

\begin{proposition}\label{affine-blowup-initial}
    $B$ is the initial $R$-algebra with a factorization $R/r\to A\to B/rB$ of the structure map modulo $r$. 
\end{proposition}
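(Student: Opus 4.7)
The plan is to show that $B$ represents the functor $C\mapsto\Hom_{\Alg_R}(A,C/rC)$ on $R$-algebras $C$. This is equivalent to the claimed universal property: since $R\to A$ factors through $R/r$ and $R\to R/r$ is a surjection, any $R$-algebra map $A\to C/rC$ automatically fits into a factorization of the structure map $R/r\to C/rC$ through $A$, and conversely.

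The argument proceeds by unwinding the construction of $B$ through a chain of bijections. First, I would invoke the equivalence between $R$-algebras and $\Z$-graded $R[u^\pm]$-algebras (with $u$ in degree $-1$), via the degree-$0$ functor whose inverse sends $C$ to $C[u^\pm]$. Writing $B^{\mathrm{gr}}:=\R_{A/R}\otimes_{R[t,u]/(tu-r)}R[u^\pm]$, which by Definition \ref{blowup-algebra} has $B$ as its degree-$0$ part, and noting that $u$ is a unit in $B^{\mathrm{gr}}$ so $B^{\mathrm{gr}}=B[u^\pm]$, this equivalence together with the universal property of the pushout yields
\[\Hom_{\Alg_R}(B,C)=\Hom^{\mathrm{gr}}_{\Alg_{R[u^\pm]}}(B[u^\pm],C[u^\pm])=\Hom^{\mathrm{gr}}_{\Alg_{R[t,u]/(tu-r)}}(\R_{A/R},C[u^\pm]),\]
where $C[u^\pm]$ acquires its $R[t,u]/(tu-r)$-algebra structure via $t\mapsto ru^{-1}$, $u\mapsto u$.

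Next, I would apply the universal property of $\R_{A/R}$ from Proposition \ref{rees} in the category of graded $R[t]$-algebras: graded $R[t]$-algebra maps $\R_{A/R}\to S$ correspond naturally to $A$-algebra structures on $S/tS$ over $R$. Applied to $S=C[u^\pm]$ with $t$ acting as $ru^{-1}$, the reduction modulo $t$ is $(C/rC)[u^\pm]$ (since $u$ is a unit), giving
\[\Hom^{\mathrm{gr}}_{\Alg_{R[t]}}(\R_{A/R},C[u^\pm])=\Hom_{\Alg_R}(A,(C/rC)[u^\pm])=\Hom_{\Alg_R}(A,C/rC),\]
the last step because $A$ sits in degree $0$.

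The main obstacle is to reconcile the two Hom's above: one is computed over $R[t,u]/(tu-r)$, the other over just $R[t]$. I would show they coincide by arguing that every graded $R[t]$-algebra map $\phi\colon\R_{A/R}\to C[u^\pm]$ is automatically $R[t,u]/(tu-r)$-linear. Its restriction along the functorial map $R[t,u]/(tu-r)=\R_{(R/r)/R}\to\R_{A/R}$ is a graded $R[t]$-algebra map, as is the given structure map $\sigma\colon R[t,u]/(tu-r)\to C[u^\pm]$. Applying Proposition \ref{rees} again to $\R_{(R/r)/R}$, such maps correspond to $R$-algebra maps $R/r\to(C/rC)[u^\pm]$, of which there is a unique one (the canonical map from the quotient, which exists because $r$ already vanishes in $C/rC$). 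Hence $\phi$ restricts to $\sigma$, so $\phi$ is $R[t,u]/(tu-r)$-linear, and chaining all bijections produces the claimed universal property.
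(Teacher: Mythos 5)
Your proof follows the same skeleton as the paper's (graded $R[u^\pm]$-equivalence, pushout identification of $B[u^\pm]$, universal property of $\R_{A/R}$), but there is a genuine gap in the step where you reconcile the two mapping animas. You claim that every graded $R[t]$-algebra map $\phi\colon\R_{A/R}\to C[u^\pm]$ is automatically $R[t,u]/(tu-r)$-linear, on the grounds that there is a unique $R$-algebra map $R/r\to(C/rC)[u^\pm]$. In the animated setting this uniqueness is false: by Remark~\ref{quotient-yoneda}, $\Hom_R(R/r,D)$ for an $R/r$-algebra $D$ is the anima of paths in $D$ from $r$ to $0$, which (since $r$ is nullhomotopic) is the loop space $\Omega_0 D$; this is contractible only when $D$ is discrete. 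Your opening paragraph has the same problem: the claim that an $R$-algebra map $A\to C/rC$ ``automatically fits into a factorization'' of $R/r\to C/rC$ implicitly identifies $\Hom_R(A,C/rC)$ with $\Hom_{R/r}(A,C/rC)$, and the discrepancy between these is again governed by $\Hom_R(R/r,C/rC)\simeq\Omega_0(C/rC)$. So you end up proving a universal property (``$B$ corepresents $C\mapsto\Hom_R(A,C/rC)$'') that is not equivalent to the stated one (``$B$ corepresents $C\mapsto\Hom_{R/r}(A,C/rC)$''), and the bridge you build between them does not hold.

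The paper sidesteps this entirely: it never tries to show the $R[t]$-linear Hom equals the $R[t,u]/(tu-r)$-linear Hom. Rather, since $R[t,u]/(tu-r)=\R_{(R/r)/R}$, the anima $\Hom^{\mathrm{gr}}_{R[t,u]/(tu-r)}(\R_{A/R},C[u^\pm])$ is the fiber of
\[\Hom^{\mathrm{gr}}_{R[t]}(\R_{A/R},C[u^\pm])\longrightarrow\Hom^{\mathrm{gr}}_{R[t]}(\R_{(R/r)/R},C[u^\pm])\]
over the structure map, and applying the universal property of Proposition~\ref{rees} to both $\R_{A/R}$ and $\R_{(R/r)/R}$ identifies this fiber with the fiber of $\Hom_R(A,C/rC)\to\Hom_R(R/r,C/rC)$ over the $R/r$-structure, which is $\Hom_{R/r}(A,C/rC)$ by definition. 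No contractibility of $\Hom_R(R/r,-)$ is needed or used. To repair your proof, replace the ``every $R[t]$-map is automatically $R[t,u]/(tu-r)$-linear'' claim with this fiber-of-a-fiber bookkeeping, and adjust your first paragraph to match the stated universal property over $R/r$ rather than over $R$.
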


\begin{proof}
    Recall that the category of $R$-algebras and that of graded $R[u^{\pm}]$-algebras are equivalent. Therefore, for any $R$-algebra $C$, 
    $$\Hom_R(B,C)=\Hom_{R[u^{\pm}]}(B[u^{\pm}],C[u^{\pm}])=\Hom_{R[t,u]/(tu-r)}(\R_{A/R},C[u^{\pm}]).$$
    By Proposition \ref{rees}, the right hand side is the anima of $A$-algebra structures on the graded $R/r$-algebra $C[u^{\pm}]\otimes_{R[t]}R=(C/rC)[u^{\pm}]$, which is the same as the anima of $A$-algebra structures on the $R/r$-algebra $C/rC$. This proves the proposition. 
\end{proof}

\begin{proposition}\label{affine-blowup-smooth}
    Let $d$ be a natural number. If $R/r\to A$ is $(d+1)$-smooth, then so is $R\to B$, and moreover $A\to B/rB$ is $d$-smooth. 
\end{proposition}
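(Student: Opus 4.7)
My plan is to verify the two parts of the conclusion separately, using the universal property of $B$ from Proposition \ref{affine-blowup-initial}. Note that $R/r\to A$ is surjective since both $R\to A$ and $R\to R/r$ are.

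For the $d$-smoothness of $A\to B/rB$: the cofiber sequence $R/r\xrightarrow{0}R/r\to R/r\otimes_R R/r$ (obtained by base-changing $R\xrightarrow{r}R$ along $R\to R/r$) shows $R/r\otimes_R R/r=R/r\oplus R/r[1]$ as an $R/r$-module. Since this is also the underlying module of the $R/r$-algebra $\LSym_{R/r}(R/r[1])$ (obtained by applying Remark \ref{quotient-functoriality} to $R/r=R\sslash(R,r)$), the higher symmetric powers $\LSym^n_{R/r}(R/r[1])$ vanish for $n\ge2$, so $R/r\otimes_R R/r$ is the square-zero extension of $R/r$ by $R/r[1]$. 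Hence for any $A$-algebra $D$, $D\otimes_R R/r=D[\epsilon]$ is the square-zero extension of $D$ by $D[1]$, and the universal property of $B$ becomes
\[
\Hom_A(B/rB,D)=\Hom_{R/r}(A,D[\epsilon])\times_{\Hom_{R/r}(A,D)}\{\mathrm{id}\}=\Map_A(\L_{A/(R/r)},D[1]).
\]
By Yoneda, $B/rB=\LSym_A(\L_{A/(R/r)}[-1])$; surjectivity of $R/r\to A$ combined with Corollary \ref{cotangent-connectivity} places $\L_{A/(R/r)}[-1]$ in $\Perf_{[0,d]}(A)$, so $A\to B/rB$ is $d$-smooth.

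For the $(d+1)$-smoothness of $R\to B$: apply Theorem \ref{factorize-smooth} to the $0$-connective, $(d+1)$-smooth map $R/r\to A$ to get a factorization $R/r=A_0\to A_1\to\cdots\to A_{d+1}=A$ with $A_{i+1}=A_i\sslash M_i[i]$ for some $M_i\in\cProj(A_i)$ and map $f_i\colon M_i[i]\to A_i$. By functoriality of the Rees algebra and hence the affine blowup, this yields a chain $R=B_0\to B_1\to\cdots\to B_{d+1}=B$. I claim inductively that
\[
B_{i+1}=\LSym_{B_i}(\tilde{N}_i[i])\sslash\bigl(\tilde{N}_i[i]\otimes_{B_i}\LSym_{B_i}(\tilde{N}_i[i]),\ r\iota-\tilde{f}_i\bigr),
\]
where $\tilde{N}_i\in\cProj(B_i)$ lifts $M_i\otimes_{A_i}(B_i/rB_i)$ and $\tilde{f}_i\colon\tilde{N}_i[i]\to B_i$ lifts the map induced by $f_i$, both lifts existing by Proposition \ref{lift-module} applied to the $0$-connective map $B_i\to B_i/rB_i$, and $\iota$ is the canonical generator. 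To verify from the universal property, one uses the cofiber sequence $C\xrightarrow{r}C\to C/rC$: a nullhomotopy of $M_i[i]\xrightarrow{f_i}A_i\to C/rC$ in $A_i$-modules corresponds to a $B_i$-module map $\tilde{b}\colon\tilde{N}_i[i]\to C$ together with a nullhomotopy of $r\tilde{b}-\tilde{f}_i\colon\tilde{N}_i[i]\to C$. Each step then consists of the $i$-smooth polynomial extension $B_i\to\LSym_{B_i}(\tilde{N}_i[i])$ (cotangent in degree $i$) followed by a $\sslash$-quotient (contributing cotangent in degree $i+1$), so by the cotangent fiber sequence $\L_{B_{i+1}/R}\in\Perf_{[0,i+1]}(B_{i+1})$ given $\L_{B_i/R}\in\Perf_{[0,i]}(B_i)$. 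Starting from $B_0=R$ and iterating gives $\L_{B/R}\in\Perf_{[0,d+1]}(B)$, and compactness propagates through each step, so $R\to B$ is $(d+1)$-smooth.

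The main obstacle is verifying the explicit inductive description of $B_{i+1}$ from the universal property, which requires delicate analysis of lifts through the cofiber sequence $C\xrightarrow{r}C\to C/rC$; the potential non-freeness of $M_d$ permitted by Theorem \ref{factorize-smooth} can be sidestepped by first applying Lemma \ref{cellular-replacement} to replace $A$ by a smoother algebra admitting $A$ as a retract, under which the blowup is functorial.
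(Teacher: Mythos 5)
Your proof of the second conclusion ($d$-smoothness of $A\to B/rB$) is correct, and it takes a genuinely different and arguably cleaner route than the paper. The paper identifies $(B/rB)[u^{\pm}]$ with $\LSym_A(\L_{A/R}[-1])[u^{-1}]$ using the Rees algebra $\R_{A/R}$ and then analyzes $A[u]\to\LSym_A(\L_{A/R}[-1])$. You instead observe directly that $R/r\otimes_R R/r$ is the trivial square-zero extension of $R/r$ by $R/r[1]$ (since $\LSym^n_{R/r}(R/r[1])=\Lambda^n(R/r)[n]=0$ for $n\ge2$), so the universal property of $B$ plus deformation theory identifies $B/rB$ corepresentably as $\LSym_A(\L_{A/(R/r)}[-1])$. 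This is a clean, self-contained argument, and the connectivity input (surjectivity of $R/r\to A$ via Corollary \ref{cotangent-connectivity} to get $\L_{A/(R/r)}[-1]\in\Perf_{[0,d]}(A)$) is exactly right. (The notation $\{\mathrm{id}\}$ in your fiber product should of course be the singleton of the given structure map $A\to D$.)

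Your treatment of the first conclusion ($(d+1)$-smoothness of $R\to B$), however, is where things go wrong — not because the inductive-blowup strategy cannot be made to work, but because (a) it is unnecessary given what you already have, and (b) as written it has real gaps. Once you know $A\to B/rB$ is $d$-smooth, you get $R/r\to B/rB$ $(d+1)$-smooth by composition, so $\L_{B/R}\otimes_B(B/rB)=\L_{(B/rB)/(R/r)}\in\Perf_{[0,d+1]}$; combined with $\L_{B/R}[1/r]=\L_{B[1/r]/R[1/r]}=0$ (Remark \ref{outside-exceptional}) and the fact that tor-amplitude for perfect complexes is detected on residue fields of $\pi_0(B)$, which all factor through $\pi_0(B)[1/r]$ or $\pi_0(B)/r$, one gets $\L_{B/R}\in\Perf_{[0,d+1]}(B)$. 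Compactness of $R\to B$ follows readily from Proposition \ref{affine-blowup-initial} and compactness of $R/r\to A$. This is exactly what the paper does, and it is a one-paragraph deduction. Your alternative route instead re-proves everything from scratch by factoring $R/r\to A$ via Theorem \ref{factorize-smooth} and computing each $B_{i+1}$ explicitly, but you acknowledge yourself that you have not verified the inductive description of $B_{i+1}$; moreover, applying Proposition \ref{lift-module} with $c=0$ to produce $\tilde{N}_i\in\cProj(B_i)$ requires the $\K_0$ class of $M_i\otimes_{A_i}(B_i/rB_i)$ to be hit, which is not automatic — the Lemma \ref{cellular-replacement} reduction you gesture at would need to be carried out carefully, and the $d=0$ case of Theorem \ref{factorize-smooth} also carries an extra lifting hypothesis. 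In short: keep your argument for $A\to B/rB$, but replace the rest with the base-change reduction.
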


\begin{proof}
    By definition $R/r\to A$ is compact, so by Proposition \ref{affine-blowup-initial} it is easy to see that $R\to B$ is also compact. Therefore, to show its $(d+1)$-smoothness, it suffices to check $\L_{B/R}\in\Perf_{[0,d+1]}(B)$, which can be done after inverting $r$ and reducing modulo $r$. Now $R[1/r]\to B[1/r]$ is an isomorphism by Remark \ref{outside-exceptional}, and $R/r\to B/rB$ factors through $A$, so it remains to show $d$-smoothness of $A\to B/rB$. 

    By definition, $B[u^{\pm}]$ is obtained by inverting $u\in \R_{A/R}$, where $ut=r$. Thus
    $$(B/rB)[u^{\pm}]=\R_{A/R}[u^{-1}]/r=\R_{A/R}[u^{-1}]/t=\LSym_A(\L_{A/R}[-1])[u^{-1}],$$
    where $u$ goes to the image of $\d r\in\L_{(R/r)/R}[-1]$ in $\L_{A/R}[-1]$. Therefore, to show that $A\to B/rB$ is $d$-smooth, it suffices to show the same for $A[u]\to\LSym_A(\L_{A/R}[-1])$. It is clearly compact, and by the cotangent fiber sequence of $A\to A[u]\to\LSym_A(\L_{A/R}[-1])$, its cotangent complex is easily seen to be
    \begin{align*}
        &\cofib(\L_{(R/r)/R}[-1]\otimes_{R/r}A\to\L_{A/R}[-1])\otimes_A\LSym_A(\L_{A/R}[-1])\\
        &=\L_{A/(R/r)}[-1]\otimes_A\LSym_A(\L_{A/R}[-1]),
    \end{align*}
    which lies in $\Perf_{[0,d]}(\LSym_A(\L_{A/R}[-1]))$ as $R/r\to A$ is $(d+1)$-smooth. 
\end{proof}

\section{Proofs of the main results}\label{proof-main}

In this section we will prove our main theorems, Theorem \ref{main-thm-classical} and Theorem \ref{main-thm}, but before that let us address Corollary \ref{corollary-cp}. 

\begin{proof}[Proof of Corollary \ref{corollary-cp}]
    Firstly, by \cite[\texttt{07BV}]{stacks}, the map from $\O$ to its completion is ind-smooth on fraction fields, so by Theorem \ref{main-thm} it is ind-smooth. Therefore, by base change, we can reduce to the case where $\O$ is complete. 
    
    Then note that $R$ has bounded $\pi^\infty$-torsion: by \cite[\texttt{053G}]{stacks}, the quotient of $R$ by its $\pi^\infty$-torsion part is of finite presentation, so the $\pi^\infty$-torsion part is finitely generated, and hence is killed by some $\pi^n$ for $n\in\N$ large enough. 
    
    Let $\Lambda$ denote the classical $\pi$-completion of $R$. Then by the above, $\Lambda$ is also the derived $\pi$-completion of $R$, so $R/\pi^n=\Lambda/\pi^n\Lambda$ holds derivedly for all $n\in\N$. By Theorem \ref{main-thm}, it suffices to prove that $R[1/\pi]\to\Lambda[1/\pi]$ is ind-smooth. Now both rings are Noetherian and even excellent as explained in \cite[\S 1.1]{conrad-rigid}, so by Theorem \ref{popescu} it suffices to check regularity of this map, and by \cite[\texttt{07NT}]{stacks} it suffices to show that every maximal ideal of $\Lambda[1/\pi]$ gets pulled back to a maximal ideal in $R[1/\pi]$, and their completions coincide. This follows from \cite[\S 7.1.1]{non-archimedean-analysis}. 
\end{proof}

From now on, let $d$ be a natural number and $R\to\bar{R}$ be a surjective ring map with fiber $I$. Whenever we have an $R$-algebra $A$ and an $A$-module $M$, we use $\bar{A}$ and $\bar{M}$ to denote their base change to $\bar{R}$. In contrast, when we use $\bar{B}$ to denote an $\bar{R}$-algebra, we do not mean that it can be lifted to an $R$-algebra $B$. 

\subsection{The lifting problem}\label{lifting-problem}
This subsection is logically independent of the proof of \ref{main-thm} and aims at deducing Theorem \ref{main-thm-classical} from Theorem \ref{main-thm}. To this end, one needs to solve the lifting problem analogous to \cite[\texttt{07CJ}]{stacks}. Throughout the subsection, assume that $R\to\bar{R}$ is a square-zero extension and that the fiber $I=\fib(R\to\bar{R})$ is $d$-connective. For any $\bar{R}$-algebra $\bar{B}$, denote by $I_B$ the base change $I\otimes_{\bar{R}}\bar{B}$, no matter whether $\bar{B}$ can be lifted to an $R$-algebra $B$. 

\begin{proposition}[cf.\ {\cite[\texttt{07CM}]{stacks}}]
    Let $\Lambda$ be an $R$-algebra with $\bar{\Lambda}=\Lambda\otimes_R\bar{R}$ ind-$d$-smooth over $\bar{R}$. Then $\Lambda$ is ind-$d$-smooth over $R$. 
\end{proposition}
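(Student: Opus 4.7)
The plan is to exhibit $\Lambda$ as a filtered colimit of $d$-smooth $R$-algebras over $\Lambda$, by lifting such a presentation of $\bar\Lambda$ over $\bar R$ along the square-zero extension. I would write $\bar\Lambda = \colim_\alpha \bar B_\alpha$ as a filtered colimit of $d$-smooth $\bar R$-algebras with maps to $\bar\Lambda$, lift it objectwise and then diagrammatically to $B_\alpha \to \Lambda$ with each $B_\alpha$ $d$-smooth over $R$ and $B_\alpha \otimes_R \bar R \simeq \bar B_\alpha$, and then verify $\colim_\alpha B_\alpha \simeq \Lambda$.

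The obstructions to all these lifts vanish by connectivity. Deformations of $\bar B_\alpha$ to $R$ along the square-zero extension $R \to \bar R$ are classified by $\Omega^\infty\Map_{\bar B_\alpha}(\L_{\bar B_\alpha/\bar R}, I \otimes_{\bar R} \bar B_\alpha)$, with the existence obstruction in $\pi_{-1}$. By $d$-smoothness, $\L_{\bar B_\alpha/\bar R} \in \Perf_{[0,d]}(\bar B_\alpha)$, so its dual lies in $\Perf_{[-d,0]}(\bar B_\alpha)$ by Proposition \ref{perfect-c-d}(\ref{dualizable}); since $I \otimes_{\bar R} \bar B_\alpha$ is $d$-connective, the mapping spectrum is $0$-connective and $\pi_{-1}$ vanishes. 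The same calculation, with $I \otimes_R \Lambda$ or $I \otimes_{\bar R} \bar B_\beta$ in the coefficient slot, handles lifting the structure map $\bar B_\alpha \to \bar\Lambda$ to $B_\alpha \to \Lambda$ and lifting any morphism $\bar B_\alpha \to \bar B_\beta$ to $B_\alpha \to B_\beta$ once source and target have been lifted. Higher coherences live in $\pi_{-n}$ ($n \ge 2$) of the same $0$-connective mapping spectra, all of which vanish; this lets one assemble the pointwise lifts into a functorial diagram $\alpha \mapsto B_\alpha$.

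Finally set $\Lambda' := \colim_\alpha B_\alpha$. Since base change commutes with filtered colimits, $\Lambda' \otimes_R \bar R \simeq \bar\Lambda$, so the induced map $\Lambda' \to \Lambda$ becomes an equivalence after reduction modulo $I$. Letting $F := \fib(\Lambda' \to \Lambda)$, the fiber sequence $I \otimes_R F \to F \to F \otimes_R \bar R \simeq 0$ gives $F \simeq I \otimes_R F$, hence iteratively $F \simeq I^{\otimes n} \otimes_R F$, which is $(nd - 1)$-connective for every $n$. Since $F$ is a priori $(-1)$-connective as a fiber of connective animated rings, it vanishes when $d \ge 1$; the $d = 0$ case reduces to the classical fact that $\pi_0(I)^2 = 0$ in $\pi_0(R)$. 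Hence $\Lambda' \simeq \Lambda$ and $\Lambda$ is ind-$d$-smooth over $R$.

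The main obstacle, I expect, is the coherent assembly of pointwise lifts into a filtered diagram rather than the obstruction calculations themselves: one needs all spaces of lifts (for objects, morphisms, and higher simplices) to be uniformly highly connected, which is exactly what the pairing of $d$-connectivity of $I$ with the $\Perf_{[0,d]}$ cotangent bound of $d$-smoothness delivers.
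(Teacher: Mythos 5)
Your strategy---square-zero deformation theory, playing the $d$-connectivity of $I$ against the $\Perf_{[0,d]}$ bound on $\L_{\bar B_\alpha/\bar R}$---is the right engine, and both the object-wise obstruction calculation and the final argument that $F:=\fib(\Lambda'\to\Lambda)$ vanishes are essentially sound. But the middle step, assembling the pointwise lifts into a coherent lift of the entire filtered diagram $\alpha\mapsto\bar B_\alpha$, is a genuine gap, and the claim that ``higher coherences live in $\pi_{-n}$ ($n\ge 2$)'' misidentifies where the problem is. The space of lifts of a single object $\bar B_\alpha$ (together with its map to $\Lambda$) is nonempty but only a torsor under $\Omega^\infty$ of a spectrum that is merely $0$- or $1$-connective, and once $B_\alpha,B_\beta$ are fixed, the space of lifts of a morphism $\bar B_\alpha\to\bar B_\beta$ compatibly with the maps to $\Lambda$ is a torsor under $\Omega^\infty$ of a $0$-connective spectrum. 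So the obstructions to coherently lifting the $n$-simplices of the nerve of the index category live in $\pi_0,\pi_1,\pi_2,\ldots$ of these spectra---the \emph{nonnegative} homotopy groups---and none of them is forced to vanish. Contractibility of a filtered index is not enough, because the local system of lifting spectra over the index is far from constant; you would need a real argument for the descent data, and the connectivity you have gives no slack for it.

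The paper takes a different route that dodges this entirely. It first reduces to a pointwise statement: $\Lambda$ is ind-$d$-smooth as soon as every map $A\to\Lambda$ from a compact $R$-algebra $A$ factors through some $d$-smooth $B$. It then lifts a single $\bar B$ from $\bar R$ to $R$ by the same deformation calculus you use, but---and this is the key mechanism your proposal is missing---it kills the residual obstruction not by any vanishing claim but by \emph{enlarging} $\bar B$. Concretely, the paper encounters exactly a $\pi_1$-level mismatch (``the difference between the two commutations constitutes a loop in $\Map_{\bar B}(\L_{\bar B},I_\Lambda[1])$''), notes that it lies in a perfect module tensored against $I_\Lambda$, and absorbs it by passing to a larger $d$-smooth $\bar C$ and then a finitely generated polynomial extension $D$, using that $\bar\Lambda$ is a filtered colimit so the obstruction becomes zero somewhere in the system. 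If you try to make your diagram-lifting argument precise, you will find you are forced to let the diagram grow as you go, which lands you back on the paper's argument.
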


\begin{proof}
    It suffices to prove that, for every factorization $R\to A\to\Lambda$ with $A$ a compact $R$-algebra, one can find a further factorization $R\to A\to B\to\Lambda$ with a $d$-smooth $R$-algebra $B$. By assumption, over $\bar{R}$, there is a factorization $\bar{R}\to\bar{A}\to\bar{B}\to\bar{\Lambda}$ with $\bar{B}$ a $d$-smooth $\bar{R}$-algebra. We want to find a lift of it to $R$ after possibly enlarging $\bar{B}$. By deformation theory, this is a linear algebra problem. Let $\L_{\bar{R}}\to I[1]$ classify the square-zero extension $R\to\bar{R}$, and similarly $\L_{\bar{A}}\to I_A[1]$ and $\L_{\bar{\Lambda}}\to I_\Lambda[1]$. They fit into the commutative diagram
    $$\begin{tikzcd}
        \L_{\bar{R}}\ar[r]\ar[d]&\L_{\bar{A}}\ar[r]\ar[d]&\L_{\bar{B}}\ar[r]\ar[d,dashed]&\L_{\bar{\Lambda}}\ar[d]\\
        I[1]\ar[r]&I_A[1]\ar[r]&I_B[1]\ar[r]&I_\Lambda[1]
    \end{tikzcd}$$
    and we want to find a dashed arrow making the squares commute, after possibly enlarging $\bar{B}$. 
    
    To this end, we first find a $d$-smooth $R$-algebra $\bar{C}$ factoring $\bar{B}\to\bar{\Lambda}$ and a dashed arrow making
    $$\begin{tikzcd}
        \L_{\bar{R}}\ar[r]\ar[d]&\L_{\bar{A}}\ar[r]\ar[d]&\L_{\bar{B}}\ar[r]\ar[d,dashed]&\L_{\bar{\Lambda}}\ar[d]\\
        I[1]\ar[r]&I_A[1]\ar[r]&I_C[1]\ar[r]&I_\Lambda[1]
    \end{tikzcd}$$
    commute. Such a dashed arrow amounts to a nullhomotopy of the $\bar{B}$-linear map 
    $$\L_{\bar{B}/\bar{A}}[-1]\to\L_{\bar{A}}\otimes_{\bar{A}}\bar{B}\to I_A[1]\otimes_{\bar{A}}\bar{B}=I_B[1]\to I_C[1]$$
    which, after composing with $I_C[1]\to I_\Lambda[1]$, coincides with the nullhomotopy given by the diagram. Remember that since $\bar{R}\to\bar{\Lambda}$ is ind-$d$-smooth, the category of such $\bar{C}$ is filtered with colimit $\bar{\Lambda}$, so the desired nullhomotopy follows from the fact that $\L_{\bar{B}/\bar{A}}[-1]=\fib(\L_{\bar{A}/\bar{R}}\otimes_{\bar{A}}\bar{B}\to\L_{\bar{B}/\bar{R}})$ is a perfect $\bar{B}$-complex. 

    Now it remains to find a dashed arrow making
    $$\begin{tikzcd}
        \L_{\bar{R}}\ar[r]\ar[d]&\L_{\bar{B}}\ar[r]\ar[rd]&\L_{\bar{C}}\ar[r]\ar[d,dashed]&\L_{\bar{\Lambda}}\ar[d]\\
        I[1]\ar[rr]&&I_C[1]\ar[r]&I_\Lambda[1]
    \end{tikzcd}$$
    commute, after possibly enlarging $\bar{C}$. Similarly, such a dashed arrow amounts to a nullhomotopy of the $\bar{C}$-linear map
    $$\L_{\bar{C}/\bar{B}}[-1]\to\L_{\bar{B}}\otimes_{\bar{B}}\bar{C}\to I_C[1]$$
    which, after composing with $I_C[1]\to I_\Lambda[1]$, coincides with the nullhomotopy given by the diagram. Since $\L_{\bar{C}/\bar{B}}[-1]=\fib(\L_{\bar{B}/\bar{R}}\otimes_{\bar{B}}\bar{C}\to\L_{\bar{C}/\bar{R}})\in\Perf_{[-1,d]}(\bar{C})$ while $I_C[1]$ is $(d+1)$-connective, the above map is indeed zero. Choose a nullhomotopy of it, corresponding to a dashed arrow $f\colon\L_{\bar{C}}\to I_C[1]$ making the middle triangle commute, defining a lift $C$ of $\bar{C}$. This nullhomotopy, after composing with $I_C[1]\to I_\Lambda[1]$, may not coincide with the preferred nullhomotopy; however, it does after composing with $\L_{\bar{C}/\bar{R}}[-1]\to\L_{\bar{C}/\bar{B}}[-1]$, because $\Map_{\bar{C}}(\L_{\bar{C}/\bar{R}}[-1],I_\Lambda[1])$ is now $2$-connective. 
    
    The above analysis implies that, ignoring $\L_{\bar{B}}$, we can make the diagram commute; in other words, we can make the right square commute, but the composition of the commutations of the square and the triangle may not coincide with the original commutation of the right trapezoid, though it does after composing with the left trapezoid. Now the difference between the two commutations constitutes a loop in $\Map_{\bar{B}}(\L_{\bar{B}},I_\Lambda[1])$ which becomes trivial in $\Map_{\bar{R}}(\L_{\bar{R}},I_\Lambda[1])$, so it is actually a loop in $\Map_{\bar{B}}(\L_{\bar{B}/\bar{R}},I_\Lambda[1])$, or equivalently a point in $\Map_{\bar{B}}(\L_{\bar{B}/\bar{R}},I_\Lambda)$. Note that $\L_{\bar{B}/\bar{R}}\in\Perf_{[0,d]}(\bar{B})$, so any point in
    $$\Map_{\bar{B}}(\L_{\bar{B}/\bar{R}},I_\Lambda)=\L_{\bar{B}/\bar{R}}^\vee[d]\otimes_{\bar{B}} I_\Lambda[-d]=\L_{\bar{B}/\bar{R}}^\vee[d]\otimes_{\bar{B}}\bar{\Lambda}\otimes_{\bar{R}}I[-d]$$
    is represented by a finite sum of elementary tensors up to a homotopy, as all the tensor factors are connective. Therefore, we can find a finitely generated polynomial $C$-algebra $D$, a map $D\to\Lambda$, and a lift of the above point to $\Map_{\bar{B}}(\L_{\bar{B}/\bar{R}},I_D)$. Now $D$ corresponds to an arrow $g\colon\L_{\bar{D}}\to I_D[1]$ inserting into the commutative square on the right, with the difference between the two commutations of the right trapezoid lifted to a loop $h\in\Omega\Map_{\bar{B}}(\L_{\bar{B}/\bar{R}},I_D[1])$. Hence we can just use the map $g$ as follows, with the commutation of the middle trapezoid modified by $h$, to conclude. 
    $$\begin{tikzcd}
        \L_{\bar{R}}\ar[r]\ar[d]&\L_{\bar{B}}\ar[r]\ar[rd]&\L_{\bar{C}}\ar[r]\ar[d,dashed,"f"]&\L_{\bar{D}}\ar[r]\ar[d,"g"]&\L_{\bar{\Lambda}}\ar[d]\\
        I[1]\ar[rr]&&I_C[1]\ar[r]&I_D[1]\ar[r]&I_\Lambda[1]
    \end{tikzcd}$$
\end{proof}

The following corollary is obvious, noticing that when $d=0$ the connectivity condition on $I$ is vacuous. 

\begin{corollary}
    The following hold: 
    \begin{enumerate}
        \item If $R$ has only finitely many nonzero homotopy groups, then an $R$-algebra is ind-$d$-smooth if its base change to $\tau_{\le\max\{0,d-1\}}R$ is ind-$d$-smooth. 
        \item For $n\in\N$ and $r\in R$, an $R/r^n$-algebra is ind-smooth if its base change to $R/r$ is ind-smooth. 
        \item For $n\in\N$ and $r\in R$, if $R$ is a classical ring, then an $R/r$-algebra is ind-smooth if its base change to $\pi_0(R/r)$ is ind-smooth. 
        \item Theorem \ref{main-thm} implies Theorem \ref{main-thm-classical}. 
    \end{enumerate}
\end{corollary}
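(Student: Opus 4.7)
My plan is to derive all four parts from the preceding proposition, using parts (1) and (2) to reduce parts (3) and (4).

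For part (1), since $R$ has finitely many nonzero homotopy groups, its Postnikov tower $R \to \cdots \to \tau_{\le k+1}R \to \tau_{\le k}R \to \cdots \to \tau_{\le\max\{0,d-1\}}R$ is a finite chain of canonical square-zero extensions, each with fiber $\pi_{k+1}(R)[k+1]$, a $(k+1)$-connective module. For $k \ge \max\{0,d-1\}$ the fiber is $d$-connective, so the proposition lifts ind-$d$-smoothness one step at a time, and iterating yields the claim.

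For part (2), $d = 0$ so the connectivity hypothesis is vacuous. Factor $R/r^n \to R/r$ through $R/r^n \to R/r^{n-1} \to \cdots \to R/r$. The cleanest way to see each step is an animated square-zero extension is through the identification $R/r^k = R \otimes_{\Z[x]}^\L (\Z[x]/x^k)$, with $R$ given $\Z[x]$-algebra structure via $x \mapsto r$. Since $\Z[x]/x^k \to \Z[x]/x^{k-1}$ is a classical (hence animated) square-zero extension for $k \ge 2$, and pushouts of animated square-zero extensions remain square-zero (visible from the classifying map $\L_B \to I[1]$ and its base-change compatibility), $R/r^k \to R/r^{k-1}$ inherits the structure. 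Iterating the proposition lifts ind-smoothness from $R/r$-algebras to $R/r^n$-algebras.

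For part (3), if $R$ is classical then the derived ring $R/r = \cofib(R \xrightarrow{r} R)$ has $\pi_0 = R/(r)$, $\pi_1 = \{x \in R : rx = 0\}$, and $\pi_i = 0$ for $i \ge 2$; so it has finitely many nonzero homotopy groups, and part (1) with $d = 0$ applies since $\max\{0,-1\} = 0$.

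For part (4), apply Theorem \ref{main-thm} with $S = \{r^n\}$. Classical ind-smoothness of $\Lambda[1/r]$ over $R[1/r]$ gives animated ind-$0$-smoothness, handling the localization hypothesis. For the quotient hypothesis, flatness of $\Lambda$ over $R$ ensures that the base change of derived $\Lambda/r\Lambda$ over derived $R/r$ to $\pi_0(R/r)$ equals classical $\Lambda/r\Lambda$, which by assumption is ind-smooth over classical $R/r$. So part (3) yields ind-smoothness of derived $\Lambda/r\Lambda$ over derived $R/r$. Then part (2), applied to derived $\Lambda/r^n\Lambda$ over derived $R/r^n$—whose base change to $R/r$ recovers derived $\Lambda/r\Lambda$—gives ind-smoothness of derived $\Lambda/r^n\Lambda$ over derived $R/r^n$ for each $n$. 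Theorem \ref{main-thm} concludes that $\Lambda$ is animated ind-smooth over $R$, which agrees with classical ind-smoothness for classical $R,\Lambda$. The main obstacle is verifying the animated square-zero structure in part (2); I would handle this via the universal example $\Z[x]/x^k \to \Z[x]/x^{k-1}$ combined with base-change stability of square-zero extensions.
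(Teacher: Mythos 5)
Your proof is correct and takes essentially the same approach as the paper, which declares the corollary obvious modulo the observation that the connectivity hypothesis is vacuous when $d=0$; you simply supply the details the paper omits. All the ingredients check out: the Postnikov tower of an animated ring is a tower of square-zero extensions with $\pi_{k+1}(R)[k+1]$ as successive fibers (giving part~(1)); the filtration $R/r^n\to\cdots\to R/r$ consists of square-zero extensions by base change from the universal classical example $\Z[x]/x^k\to\Z[x]/x^{k-1}$ (giving part~(2)); part~(3) follows from part~(1) since a classical ring modulo an element, derivedly, is $1$-truncated; and for part~(4) you correctly use flatness of $\Lambda$ to identify the derived reductions with the classical ones on $\pi_0$ and chain parts~(3) and~(2) before invoking Theorem~\ref{main-thm} with $S=\{r^n\}$.
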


\subsection{The lifting lemma}
This subsection deals with the following lemma. Here, thanks to animated rings, one no longer needs to be fiendishly clever as in \cite[\texttt{07CN}]{stacks}. Throughout the subsection, fix $r\in R$ and set $\bar{R}=R/r$. 

\begin{lemma}[cf.\ {\cite[\texttt{07CP}]{stacks}}]\label{lifting-lemma}
    Let $\Lambda$ be an $R$-algebra. Let $\bar{C}$ be a $(d+1)$-smooth $\bar{R}$-algebra with a factorization $\bar{R}\to\bar{C}\to\bar{\Lambda}$. Then there is an $R$-algebra $D$ and a commutative diagram
    $$\begin{tikzcd}
        R\ar[rr]\ar[d]&&D\ar[r]\ar[d]&\Lambda\ar[d]\\
        \bar{R}\ar[r]&\bar{C}\ar[r]&\bar{D}\ar[r]&\bar{\Lambda}
    \end{tikzcd}$$
    satisfying that $D[1/r]$ is smooth over $R[1/r]$ and that $\bar{D}$ is $d$-smooth over $\bar{C}$. In fact, if $\bar{R}\to\bar{C}$ is surjective, there is an initial $R$-algebra $D$ with a factorization $\bar{R}\to\bar{C}\to\bar{D}$; this $D$ satisfies the above conditions with $D[1/r]=R[1/r]$.  
\end{lemma}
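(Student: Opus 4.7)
The plan is to split into two cases: first handle the surjective case via the affine blowup algebra machinery of Propositions \ref{affine-blowup-initial} and \ref{affine-blowup-smooth}, then reduce the general case to the surjective one by factoring $\bar R\to\bar C$ through a polynomial algebra.

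For the surjective case, assume $\bar R\to\bar C$ is surjective, so the composition $R\to\bar C$ is also surjective with $r\in\fib(R\to\bar C)$. I would take $D$ to be the affine blowup algebra of $R\to\bar C$ with respect to $r$. Proposition \ref{affine-blowup-initial} identifies $D$ as the initial $R$-algebra with a factorization $\bar R\to\bar C\to D/rD$ of its structure map modulo $r$, so feeding in the given factorization $\bar R\to\bar C\to\bar\Lambda$ yields the required map $D\to\Lambda$ and fills in the commutative diagram. Proposition \ref{affine-blowup-smooth} then gives that $R\to D$ is $(d+1)$-smooth (in particular compact) and that $\bar C\to\bar D$ is $d$-smooth, while Remark \ref{outside-exceptional} gives $D[1/r]=R[1/r]$, proving the final claim.

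For the general case, I would use classical compactness of $\bar C$ over $\bar R$ to pick finitely many $\bar x_1,\ldots,\bar x_n\in\pi_0(\bar C)$ generating $\pi_0(\bar C)$ as a $\pi_0(\bar R)$-algebra. With $P=\bar R[x_1,\ldots,x_n]$ this gives a surjective factorization $\bar R\to P\twoheadrightarrow\bar C$; the cotangent fiber sequence and Corollary \ref{cotangent-connectivity} put $\L_{\bar C/P}\in\Perf_{[1,d+1]}(\bar C)$, and compactness of $\bar C$ over $P$ is automatic since $\Hom_P(\bar C,-)$ is a homotopy fiber of functors that commute with filtered colimits, so $P\to\bar C$ is $(d+1)$-smooth. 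Setting $R'=R[x_1,\ldots,x_n]$ so that $R'/r=P$, I would lift the images of the $\bar x_i$ in $\pi_0(\bar\Lambda)$ to elements of $\pi_0(\Lambda)$ using surjectivity of $\Lambda\to\bar\Lambda$; the chosen lifts define an $R'$-algebra structure on $\Lambda$ whose reduction modulo $r$ is compatible with $P\to\bar C\to\bar\Lambda$. Applying the surjective case to $(R',r,P,\bar C,\Lambda)$ produces an $R'$-algebra $D$ with $D[1/r]=R'[1/r]$ and $\bar C\to\bar D$ $d$-smooth; viewing $D$ as an $R$-algebra via $R\to R'\to D$ yields $D[1/r]=R[1/r][x_1,\ldots,x_n]$, which is smooth over $R[1/r]$, while the conclusion for $\bar D$ is unchanged.

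The main obstacle I anticipate is the bookkeeping around coherence in the general case: one must verify that the $R'$-structure on $\Lambda$ built from $\pi_0$-level lifts of $\bar x_i$ genuinely fits into a homotopy coherent diagram together with $P\to\bar C\to\bar\Lambda$ before invoking the universal property of the affine blowup algebra. This ultimately reduces to the fact that maps from polynomial algebras into an animated ring are controlled by the underlying anima, together with $0$-connectivity of $\Lambda\to\bar\Lambda$.
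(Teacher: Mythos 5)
Your proof is correct and takes essentially the same approach as the paper: reduce to the surjective case by replacing $R$ with the polynomial algebra $R'=R[x_1,\ldots,x_n]$, then invoke the affine blowup algebra and apply Remark~\ref{outside-exceptional} and Propositions~\ref{affine-blowup-initial} and~\ref{affine-blowup-smooth}. The paper compresses the reduction step into a single sentence, whereas you spell out the $(d+1)$-smoothness of $P\to\bar C$ and the lifting of the $R'$-structure on $\Lambda$; your observation that this lift is controlled by $\pi_0$-surjectivity of $\Lambda\to\bar\Lambda$ is exactly the implicit content of the paper's one-line reduction.
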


\begin{proof}
    Replacing $R$ by a finitely generated polynomial $R$-algebra, we can assume that $\bar{R}\to\bar{C}$ is surjective. In this case everything reduces to the final claim, which follows from Remark \ref{outside-exceptional} and Propositions \ref{affine-blowup-initial} and \ref{affine-blowup-smooth}. 
\end{proof}

\subsection{The desingularization lemma}
This subsection deals with the desingularization lemma as in \cite[\texttt{07CQ}]{stacks}. Here animated rings do not help that much; one still needs the fiendishly clever construction given there. 
Throughout the subsection, fix $r\in R$ and set $\bar{R}=R/r$. 

\begin{lemma}\label{desing-initial-quotient}
    Let $M$ be an $R$-module with a map $f\colon M\to R$ and an $R$-algebra map $R\sslash f\to\bar{R}$. Then there is a natural $g\in\Map_R(M,R)$ with $f=rg$, and $R\sslash g$ is the initial $R$-algebra $\Lambda$ with a commutative diagram
    $$\begin{tikzcd}
        R\ar[r]\ar[d]&R\sslash f\ar[r]\ar[ld]&\Lambda\ar[d]\\
        \bar{R}\ar[rr]&&\bar{\Lambda}
    \end{tikzcd}$$
\end{lemma}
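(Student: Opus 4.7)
The plan has two parts. First, I extract $g$: by Remark~\ref{quotient-yoneda}, the given $R$-algebra map $R\sslash f\to\bar{R}$ is a path from $0$ to $f_{\bar{R}}$ in $\Map_R(M,\bar{R})$. Applying $\Map_R(M,-)$ to the $R$-module fiber sequence $R\xrightarrow{r}R\to\bar{R}$ yields the fiber sequence
\[\Map_R(M,R)\xrightarrow{r}\Map_R(M,R)\to\Map_R(M,\bar{R}),\]
under which such a path is equivalent to the data of an element $g\in\Map_R(M,R)$ together with a specified homotopy $rg\simeq f$. This produces the required $g$.

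Next, I set up both universal properties in parallel. Write $F_\Lambda=\Map_R(M,\Lambda)$ for an $R$-algebra $\Lambda$. By Remark~\ref{quotient-yoneda}, $\Hom_{\Alg_R}(R\sslash g,\Lambda)$ is naturally the space of paths from $0$ to $g_\Lambda$ in $F_\Lambda$, while unwinding the diagram in the statement via Remark~\ref{quotient-yoneda} and the base-change identity $R\sslash f\otimes_R\bar{R}=\bar{R}\sslash f_{\bar{R}}$ from Remark~\ref{quotient-functoriality} identifies the space $X(\Lambda)$ of such diagrams with the homotopy pullback
\[X(\Lambda)=\mathrm{Paths}_{0\to f_\Lambda}(F_\Lambda)\times_{\mathrm{Paths}_{0\to f_{\bar{\Lambda}}}(F_{\bar{\Lambda}})}\{\zeta_{\bar{\Lambda}}\},\]
where $\zeta$ is the path corresponding to the given map $R\sslash f\to\bar{R}$ and $\zeta_{\bar{\Lambda}}$ is its base change along $\bar{R}\to\bar{\Lambda}$. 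I define a natural transformation $\Phi_\Lambda\colon\Hom_{\Alg_R}(R\sslash g,\Lambda)\to X(\Lambda)$ sending a nullhomotopy $\xi$ of $g_\Lambda$ to $(r\xi,h_{\mathrm{can}})$, where $r\xi$ is obtained by multiplying the path $\xi$ pointwise by $r$, and $h_{\mathrm{can}}$ is the canonical identification $r\bar{\xi}\simeq\zeta_{\bar{\Lambda}}$ produced by the $2$-cell $(s,t)\mapsto\tau(s)\cdot\bar{\xi}(t)$ in $F_{\bar{\Lambda}}$, using the canonical nullhomotopy $\tau$ of multiplication by $r$ on $\bar{\Lambda}$ coming from its $\bar{R}$-algebra structure. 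Taking $\Lambda=R\sslash g$ with $\xi$ the tautological nullhomotopy of $g$ equips $R\sslash g$ itself with the required diagram.

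To show $\Phi_\Lambda$ is an equivalence, I apply $\Map(\Delta^1,-)$ to the fiber sequence $F_\Lambda\xrightarrow{r}F_\Lambda\to F_{\bar{\Lambda}}$ and then take fibers over the compatible endpoint pairs $(0,g_\Lambda)$, $(0,f_\Lambda)$, $(0,0)$. Since $\Map(\Delta^1,-)$ and fibers commute with limits, this produces a fiber sequence of path anima, and the canonical $2$-cell above precisely identifies the basepoint of the rightmost term $\Omega F_{\bar{\Lambda}}$ with $\zeta_{\bar{\Lambda}}$, so that the fiber at $\zeta_{\bar{\Lambda}}$ (which is exactly $X(\Lambda)$) is naturally $\mathrm{Paths}_{0\to g_\Lambda}(F_\Lambda)=\Hom_{\Alg_R}(R\sslash g,\Lambda)$.

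The main obstacle is this final basepoint-tracking step: the fiber sequence of path anima naively gives the fiber at the constant loop in $\Omega F_{\bar{\Lambda}}$, whereas $X(\Lambda)$ is defined as a fiber at $\zeta_{\bar{\Lambda}}$. Rigorously verifying that $\Phi_\Lambda$ realizes the torsor translation between these two fibers, via the canonical $2$-cell, is where the technical work of the proof concentrates.
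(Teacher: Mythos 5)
Your proof takes the same approach as the paper's (which is considerably terser, compressing the key identification into the single clause ``which is the same thing as a lift of $z$ along $r\colon\Lambda\to\Lambda$''): extract $g$ from the fiber sequence $\Map_R(M,R)\xrightarrow{r}\Map_R(M,R)\to\Map_R(M,\bar{R})$ via Remark~\ref{quotient-yoneda}, then identify the space of commutative diagrams with $\Hom_{\Alg_R}(R\sslash g,\Lambda)$ using the same fiber sequence over $\Lambda$. The ``main obstacle'' you flag—realigning the fiber over $\zeta_{\bar{\Lambda}}$ with the fiber over the constant loop—is exactly what the paper leaves implicit; it is a routine transport-along-$\zeta$ coherence check, and your $2$-cell $(s,t)\mapsto\tau(s)\cdot\bar{\xi}(t)$ is the correct witness for it, so there is no genuine gap.
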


\begin{proof}
    By Remark \ref{quotient-yoneda}, an $R$-algebra map $R\sslash f\to\bar{R}$ is a nullhomotopy of the composition $M\to R\to\bar{R}$, which is the same as a lift of $f$ along $r\colon R\to R$, giving the $g$ in the statement. By the same reason, for a fixed $\Lambda$, such a commutative diagram is a nullhomotopy $z$ of the composition $M\to R\to\Lambda$ whose further composition to $M\to R\to\bar{\Lambda}$ is the given nullhomotopy, which is the same thing as a lift of $z$ along $r\colon\Lambda\to\Lambda$. This is by definition represented by $R\sslash g$. 
\end{proof}

\begin{lemma}\label{desing-initial-poly}
    For any $n\in\N$, the initial $R$-algebra $\Lambda$ with a commutative diagram
    $$\begin{tikzcd}
        R\ar[r]\ar[d]&R[x_1,\ldots,x_n]\ar[r]\ar[ld,"{(0,\ldots,0)}"]&\Lambda\ar[d]\\
        \bar{R}\ar[rr]&&\bar{\Lambda}
    \end{tikzcd}$$
    is given by $\Lambda=R[y_1,\ldots,y_n]$, with the upper right arrow sending $x_i$ to $ry_i$. 
\end{lemma}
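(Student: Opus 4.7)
The plan is to identify the functor on $R$-algebras $\Lambda$ sending $\Lambda$ to the anima of such commutative diagrams with the functor $\Hom_R(R[y_1,\ldots,y_n],-)$. First, by the adjunction between $\LSym_R$ and the forgetful functor, an $R$-algebra map $R[x_1,\ldots,x_n]=\LSym_R(R^n)\to\Lambda$ is the same datum as an $R$-module map $R^n\to\Lambda$. The commutativity of the diagram in question is exactly a homotopy between the two $R$-algebra maps $R[x_1,\ldots,x_n]\rightrightarrows\bar\Lambda$ obtained by the two paths through it; by the same adjunction (now over $\bar R$), this is a nullhomotopy of the $R$-module map $R^n\to\Lambda\to\bar\Lambda$, the diagonal being the zero map.

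Next, I will compute $\fib(\Lambda\to\bar\Lambda)$. Since $\bar R=R/r$ is by definition the cofiber of $r\colon R\to R$, one has $\fib(R\to\bar R)=R$ with inclusion given by multiplication by $r$; base changing along $R\to\Lambda$ yields $\fib(\Lambda\to\bar\Lambda)=\Lambda$, again with inclusion being multiplication by $r$. Therefore the anima of nullhomotopies of $R^n\to\Lambda\to\bar\Lambda$ identifies with $\Map_R(R^n,\Lambda)$, i.e.\ with $R$-module maps $R^n\to\Lambda$.

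Putting the two steps together, the functor of commutative diagrams is corepresented by $\LSym_R(R^n)=R[y_1,\ldots,y_n]$. To compute the universal map $R[x_1,\ldots,x_n]\to R[y_1,\ldots,y_n]$, note that the identification is by sending an $R$-module map $R^n\to\Lambda$ (which I think of as choosing $y_i\mapsto b_i$) to the $R$-algebra map $R[x_1,\ldots,x_n]\to\Lambda$ given by $x_i\mapsto r\cdot b_i$, together with the tautological nullhomotopy through the fiber. The universal case $\Lambda=R[y_1,\ldots,y_n]$ with $b_i=y_i$ then yields $x_i\mapsto ry_i$, as claimed.

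The only potential obstacle is the bookkeeping of homotopies (versus strict equalities) when applying the adjunction, but both adjunctions work at the level of mapping animas, so nullhomotopies transport cleanly. Nothing here requires the fiendish cleverness needed for the analogous classical statement in \cite[\texttt{07CQ}]{stacks}; the animated setup does the work for us.
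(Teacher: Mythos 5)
Your proof is correct and matches the approach the paper intends: the paper omits the argument, calling it ``similar to and simpler than Lemma~\ref{desing-initial-quotient},'' and your proof follows precisely that template---corepresent the functor of such diagrams via the free--forgetful adjunction, identify the nullhomotopy datum with a factorization through $\fib(\Lambda\to\bar\Lambda)=r\Lambda\cong\Lambda$, and read off the universal element. One small slip: the adjunction you invoke for the homotopy step is still the one over $R$ (namely $\Hom_R(\LSym_R(R^n),\bar\Lambda)\simeq\Map_R(R^n,\bar\Lambda)$), not over $\bar R$, since $R[x_1,\dots,x_n]$ is not an $\bar R$-algebra; this does not affect the argument.
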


\begin{proof}
    Omitted. It is similar to and simpler than Lemma \ref{desing-initial-quotient}. 
\end{proof}

\begin{lemma}[cf.\ {\cite[\texttt{07CR}]{stacks}}]\label{desing-lemma-0}
    Let $A$ and $\Lambda$ be $R$-algebras with a commutative diagram
    $$\begin{tikzcd}
        R\ar[r]\ar[d]&A\ar[r]\ar[ld]&\Lambda\ar[d]\\
        \bar{R}\ar[rr]&&\bar{\Lambda}
    \end{tikzcd}$$
    Let $p\in R$ with $p^4=r$. Suppose that $R\to A$ is $1$-smooth, that $[\L_{A/R}]=[A^{\oplus k}]\in\K_0(A)$ for some $k\in\N$, and that there are maps $s\colon A^{\oplus k}\to\L_{A/R}$ and $t\colon\L_{A/R}\to A^{\oplus k}$ with $s\circ t=p\in\Map_A(\L_{A/R},\L_{A/R})$. Then there is a smooth $R$-algebra $B$ factorizing $A\to\Lambda$, with $B[1/r]$ smooth over $A[1/r]$. 
\end{lemma}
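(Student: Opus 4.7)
The strategy is to mimic, in the animated language, the classical construction of \cite[\texttt{07CR}]{stacks} using the quotient algebra formalism of Definition \ref{quotient-algebra}: $B$ will be obtained from $A$ by adjoining $k$ new generators $y_1,\ldots,y_k$ and imposing relations built out of $s$, $t$ and the powers of $p$. As the author already warns at the start of the subsection, the animated setting tidies the cotangent complex bookkeeping (via Proposition \ref{cotangent-of-quotient}) but does not bypass the combinatorial cleverness needed to choose the relations.

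First I would make $A$ concrete. Applying Theorem \ref{factorize-smooth} after enlarging $R$ to a finitely generated polynomial $R$-algebra, we may assume that $R\to A$ is a surjection with $A=R\sslash(M,f)$ for some free $R$-module $M$ and $f\colon M\to R$. Proposition \ref{cotangent-of-quotient} then gives $\L_{A/R}=M[1]\otimes_R A$, so the stable-freeness hypothesis becomes a concrete statement about $M$ modulo free summands, and $s$, $t$ can be represented by explicit matrices that lift to matrices over the polynomial algebra.

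Next I would define $B=(R[y_1,\ldots,y_k])\sslash(N,g)$, where $N$ is a finite free $R[y]$-module and $g\colon N\to R[y]$ is assembled from $f$, $s$, $t$ and $p$ in such a way that the cotangent fiber sequence of $R\to R[y_1,\ldots,y_k]\to B$, together with Proposition \ref{cotangent-of-quotient}, presents $\L_{B/R}$ as a finite projective $B$-module. The intuition is that the identity $s\circ t=p$ lets the $1$-shifted contribution of $\L_{A/R}$ be traded for free summands on the $y_i$ coordinates, and the stable freeness of $\L_{A/R}$ (Corollary \ref{stably-free-corollary}) removes the residual summands. After inverting $r$, hence $p$, the map $s$ acquires the retraction $p^{-1}t$ and the $y_i$ become genuine polynomial generators, so $B[1/r]$ is a polynomial algebra over $A[1/r]$ up to smooth base change, yielding the required smoothness of $A[1/r]\to B[1/r]$. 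The factorization $A\to B\to\Lambda$ is then produced via Remark \ref{quotient-yoneda}: the hypothesis that $A\to\Lambda$ factors through $\bar\Lambda$ ensures that the relations encoded in $N$ vanish modulo $r$, and the high power $p^4=r$ provides enough $p$-divisibility to choose simultaneous lifts of the $y_i$ into $\Lambda$ together with the required nullhomotopies.

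The main obstacle is precisely the middle step: writing down $g\colon N\to R[y_1,\ldots,y_k]$ so that $\L_{B/R}$ is projective \emph{and} the nullhomotopy needed for the map to $\Lambda$ exists \emph{and} $B[1/r]$ is polynomial over $A[1/r]$, all at once. This is the ``fiendishly clever'' combinatorial juggling inherited from \cite[\texttt{07CR}]{stacks}. The exponent $p^4=r$ is chosen so that the several independent places where $p$-divisibility is consumed---killing the obstruction class to extend the map to $\Lambda$, turning $s$ into an isomorphism after inversion, and producing the retraction that straightens out $\L_{B/R}$---can be met with a single common power.
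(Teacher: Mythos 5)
Your framing is sound: you correctly identify that the construction should run through Theorem~\ref{factorize-smooth}, the quotient algebra formalism, Proposition~\ref{cotangent-of-quotient}, and Remark~\ref{quotient-yoneda}, and you correctly flag that the hard part is choosing the right relations. But then you explicitly concede that you cannot produce the ``middle step,'' and that is not a small step to skip --- it \emph{is} the lemma. Without the actual construction, what remains is a restatement of the goal, and the proof is not complete.

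The missing ingredients in the paper's argument are the following. First, one does not simply present $A=R\sslash f$: one also makes $\Lambda$ concrete by passing to the initial commutative square via Lemmas~\ref{desing-initial-quotient} and~\ref{desing-initial-poly}, obtaining $\Lambda=Q\sslash g$ with $Q=R[y_1,\ldots,y_n]$, $x_i\mapsto ry_i$, and $rg=f\otimes_PQ$. Second, the intermediate ring $B$ is built not on $k$ new generators but on $m+n$ new generators $v_1,\ldots,v_m,w_1,\ldots,w_n$, via the substitution $x\mapsto p^2(v\cdot h(0)+pw)$, where $h$ is a matrix representative of the homotopy $s\circ t=p$ and $h(0)$ is its constant term. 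Third, the algebraic identity $h(0)\circ a=p(1+p^3q)$ extracted from reducing $h\circ\partial f/\partial x=p$ modulo $r$ is precisely what makes $f\otimes_PS$ divisible by $p^3$ with quotient of the form $v+w\cdot a+O(p)$, which is what yields both the map $B\to\Lambda$ (via $v_j\mapsto 0$, $w_i\mapsto py_i$) and the polynomiality of $B/pB$. None of this appears in your sketch, and your proposed $B=R[y_1,\ldots,y_k]\sslash(N,g)$ has the wrong number of variables. So the proposal captures the ambient formalism but not the construction, and as written it would not close.
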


\begin{proof}
    By assumption, we can take a surjection $P=R[x_1,\ldots,x_n]\to A$ with $\L_{A/P}[-1]$ finite free of rank $m=n-k$. Translating, we can assume that every $x_i$ is mapped to $0$ under $A\to\bar{R}$. By Theorem \ref{factorize-smooth}, there is a map $f\colon P^{\oplus m}\to P$ with $A=P\sslash f$. Combining Lemmas \ref{desing-initial-quotient} and \ref{desing-initial-poly}, we see that the initial such commutative diagram is given by
    $$\begin{tikzcd}
        R\ar[r]\ar[d]&A=P\sslash f\ar[r]\ar[ld]&\Lambda=Q\sslash g\ar[d]\\
        \bar{R}\ar[rr]&&\bar{\Lambda}
    \end{tikzcd}$$
    where $Q=R[y_1,\ldots,y_n]$, the map $P\to Q$ sends $x_i$ to $ry_i$, and $g\colon Q^{\oplus m}\to Q$ satisfies $rg=f\otimes_PQ$, or informally $g(y)=f(ry)/r$. It suffices to prove the lemma for this diagram. Now write
    \begin{equation}\label{gexpanded}
        g=g(0)+y\cdot a+(\text{higher order terms}),
    \end{equation}
    where $g(0)\colon R^{\oplus m}\to R$ and $a\colon R^{\oplus m}\to R^{\oplus n}$ are certain matrices; then
    \begin{equation}\label{fexpanded}
        f=rg(0)+x\cdot a+(\text{higher order terms}).
    \end{equation}
    Note that $\L_{A/R}$ can be presented as 
    $$\cofib(P^{\oplus m}\otimes_PA\to\L_{P/R}\otimes_PA)=\cofib\left(A^{\oplus m}\to A^{\oplus n}\right),$$
    with the map given by $\partial f/\partial x$. Thus we can lift the map $s\colon A^{\oplus k}\to\L_{A/R}$ to a matrix $s\colon A^{\oplus k}\to A^{\oplus n}$, view the map $t\colon\L_{A/R}\to A^{\oplus k}$ as a matrix $t\colon A^{\oplus n}\to A^{\oplus k}$ with a nullhomotopy of $t\circ\partial f/\partial x$, and finally write the homotopy $s\circ t=p$ as a matrix $h\colon A^{\oplus n}\to A^{\oplus m}$ with homotopies $h\circ\partial f/\partial x=p$ and $\partial f/\partial x\circ h=s\circ t-p$. Lift $h$ to a matrix $h\colon P^{\oplus n}\to P^{\oplus m}$ and consider its constant term $h(0)\colon R^{\oplus n}\to R^{\oplus m}$. Recall that the constant term of $\partial f/\partial x$ is $a\colon R^{\oplus m}\to R^{\oplus n}$ as in (\ref{fexpanded}). We apply the map $A\to\bar{R}$ on the homotopy $h\circ\partial f/\partial x=p$; in view of the commutative diagram
    $$\begin{tikzcd}
        P\ar[r,"{(0,\ldots,0)}"]\ar[d]&R\ar[d]\\
        P\sslash f=A\ar[r]&\bar{R}
    \end{tikzcd}$$
    this gives a homotopy in $\Map_R(R^{\oplus m},R^{\oplus m})$ between $h(0)\circ a$ and $p$ after base change to $\bar{R}$, or equivalently a point $q\in\Map_R(R^{\oplus m},R^{\oplus m})$ with
    \begin{equation}\label{hexpanded}
        h(0)\circ a=p+rq=p(1+p^3q).
    \end{equation}
    Now consider the polynomial ring $S=R[v_1,\ldots,v_m,w_1,\ldots,w_n]$ and the ring map $P\to S$ given by $x\mapsto p^2(v\cdot h(0)+pw)$. Then by (\ref{hexpanded}), the base change of $f\colon P^{\oplus m}\to P$ along $P\to S$ is of the form 
    $$p^3(v+w\cdot a)+p^4(\text{other terms});$$
    this amounts to some $c\colon S^{\oplus m}\to S$ that becomes $v+w\cdot a$ after base change to $S/pS$ and satisfies $p^3c=f\otimes_PS$. Let $B=S\sslash c$. It clearly receives a map from $A=P\sslash f$. To define the map $B\to\Lambda$, consider the map $S\to Q$ taking all the $v_j$ to $0$ and $w_i$ to $py_i$. By construction, the base change of $c$ along $S\to Q$ is nothing but $pg$, and the identifications $pg=c\otimes_SQ$ and $p^3c=f\otimes_PS$ compose to the original identification $p^4g=f\otimes_PQ$. This gives the desired map $B=S\sslash c\to\Lambda\sslash g=\Lambda$ as well as the desired factorization $A\to B\to\Lambda$. 

    We now verify the claimed properties of $B$. Since $p^3c=f\otimes_PS$, the map $A[1/p]\to B[1/p]$ is a base change of the map $P[1/p]\to S[1/p]$ given by $x\mapsto p^2(v\cdot h(0)+pw)$, which is clearly smooth: in fact $S[1/p]=P[1/p][v_1,\ldots,v_m]$ is polynomial, as we can write $w=p^{-1}(p^{-2}x-v\cdot h(0))$. By assumption, $\L_{A[1/p]/R[1/p]}\cong A[1/p]^{\oplus k}$, so $A[1/p]$ is smooth over $R[1/p]$, and hence so is $B[1/p]$. Thus it remains to verify that $B/pB$ is smooth over $R/p$. This is also clear: recall that $c\colon S^{\oplus m}\to S$ reduces to $v+w\cdot a$ over $S/pS$, so $B/pB=(S/pS)\sslash(v+w\cdot a)=R/p[v_1,\ldots,v_m,w_1,\ldots,w_n]\sslash(v+w\cdot a)=R/p[w_1,\ldots,w_n]$ is in fact polynomial. 
\end{proof}

\begin{lemma}[cf.\ {\cite[\texttt{07F0}]{stacks}}]\label{ultimate-desing-0}
    Let $A$ and $\Lambda$ be $R$-algebras and $\bar{C}$ be an $\bar{R}$-algebra, with a commutative diagram
    $$\begin{tikzcd}
        R\ar[r]\ar[d]&A\ar[rr]\ar[d]&&\Lambda\ar[d]\\
        \bar{R}\ar[r]&\bar{A}\ar[r]&\bar{C}\ar[r]&\bar{\Lambda}
    \end{tikzcd}$$
    Let $p\in R$ with $p^4=r$. Suppose that $\bar{R}\to\bar{C}$ is $1$-smooth, and that $R\to A$ satisfies the assumptions in Lemma \ref{desing-lemma-0}. Then there is an $R$-algebra $B$ factorizing $A\to\Lambda$ with $B[1/r]$ smooth over $A[1/r]$, such that $\bar{B}$ factorizes $\bar{C}\to\bar{\Lambda}$ and is smooth over $\bar{C}$. In particular, if $\bar{R}\to\bar{C}$ is smooth, then so is $R\to B$. 
\end{lemma}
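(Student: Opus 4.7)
The plan is to reduce to Lemma \ref{desing-lemma-0} by first lifting $\bar{C}$ to an $R$-algebra over $\Lambda$ and then running that desingularization lemma with $R$ replaced by this lift. First, apply Lemma \ref{lifting-lemma} with $d=0$ to the $1$-smooth $\bar{R}$-algebra $\bar{C}$, equipped with the given factorization $\bar{R}\to\bar{C}\to\bar{\Lambda}$ and target $\Lambda$. This produces an $R$-algebra $D$ with a map $D\to\Lambda$ and a compatible factorization $\bar{R}\to\bar{C}\to\bar{D}\to\bar{\Lambda}$, such that $D[1/r]$ is smooth over $R[1/r]$ and $\bar{D}$ is smooth over $\bar{C}$.

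Next, form $A'=A\otimes_R D$ as a $D$-algebra, with structure map $A'\to\Lambda$ induced from $A\to\Lambda$ and $D\to\Lambda$. I check that $A'$ over $D$, together with the target $\Lambda$ and the same element $p$ with $p^4=r$, satisfies the hypotheses of Lemma \ref{desing-lemma-0}: the $1$-smoothness, the $\K_0$-condition $[\L_{A'/D}]=[(A')^{\oplus k}]$, and the splittings $s,t$ with $s\circ t=p$ all transport immediately by base change from $A$ over $R$. The remaining piece of data, the ``diagonal'' map $A'\to\bar{D}$ required by Lemma \ref{desing-lemma-0}, is constructed as the composition $A\otimes_R D\to\bar{A}\otimes_{\bar{R}}\bar{D}\to\bar{D}\otimes_{\bar{R}}\bar{D}\to\bar{D}$, where the second arrow uses the $\bar{R}$-algebra map $\bar{A}\to\bar{C}\to\bar{D}$ on one factor and the identity on the other, and the last arrow is multiplication (which is a ring map in the commutative setting). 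Its compatibility with the two paths to $\bar{\Lambda}$ follows from the commutativity of the given diagram together with the factorization $\bar{C}\to\bar{D}\to\bar{\Lambda}$ obtained in the first step. Applying Lemma \ref{desing-lemma-0} then yields a smooth $D$-algebra $B$ factoring $A'\to\Lambda$ with $B[1/r]$ smooth over $A'[1/r]$.

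Finally, I verify the claimed properties of $B$ viewed as an $R$-algebra via $R\to D\to B$. For $B[1/r]$ smooth over $A[1/r]$: the map $A[1/r]\to A'[1/r]$ is smooth, being a base change of the smooth map $R[1/r]\to D[1/r]$, and $A'[1/r]\to B[1/r]$ is smooth by construction. For $\bar{B}$: smoothness over $\bar{D}$ is a base change of $D\to B$, smoothness over $\bar{C}$ follows by composition with the smooth $\bar{C}\to\bar{D}$, and the factorization $\bar{C}\to\bar{D}\to\bar{B}\to\bar{\Lambda}$ is immediate. For the ``in particular'' claim, Proposition \ref{affine-blowup-smooth} applied inside the proof of Lemma \ref{lifting-lemma} gives that $R\to D$ is $1$-smooth, and when $\bar{R}\to\bar{C}$ is smooth so is $\bar{R}\to\bar{D}$; a Nakayama-type argument on $\L_{D/R}\in\Perf_{[0,1]}(D)$, noting that $\pi_1(\L_{D/R})$ is finitely generated, annihilated by a power of $r$ (since $\L_{D[1/r]/R[1/r]}$ is in degree $0$), and equal to $r\cdot\pi_1(\L_{D/R})$ (from the fiber sequence $r\colon\L_{D/R}\to\L_{D/R}\to\L_{\bar{D}/\bar{R}}$), upgrades this to $R\to D$ being smooth, after which $R\to D\to B$ is smooth by composition. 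The main subtleties are the careful verification that the commutative square demanded by Lemma \ref{desing-lemma-0} is respected after base change to $D$, and this last amplitude improvement for the ``in particular'' clause.
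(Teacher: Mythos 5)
Your overall strategy matches the paper's: apply Lemma \ref{lifting-lemma} to produce $D$, form $A\otimes_RD$ and base change the commutative square to $D$ to place yourself in the setting of Lemma \ref{desing-lemma-0}, then verify the conclusions. The explicit construction of the diagonal $A\otimes_RD\to\bar{D}$ via $\bar{A}\otimes_{\bar{R}}\bar{D}\to\bar{D}$ is a correct spelling-out of the paper's ``using the map $\bar{A}\to\bar{D}$,'' and the verification of the $B[1/r]$ and $\bar{B}$ properties is fine.

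The argument for the ``in particular'' clause has two genuine gaps. First, the claim that $\pi_1(\L_{D/R})$ is finitely generated is unjustified in the non-Noetherian setting that this paper is specifically designed for: an object of $\Perf_{[0,1]}(D)$ is a two-term complex of finite projectives by Proposition \ref{perfect-c-d}(\ref{one-step-free-resolution}), and the kernel of a map of finite projective modules need not be finitely generated over a non-Noetherian $\pi_0(D)$. Second, and more fundamentally, even if you successfully deduced $\pi_1(\L_{D/R})=0$, this does not imply that $R\to D$ is smooth: smoothness requires $\L_{D/R}\in\Perf_{[0,0]}(D)$, i.e.\ the vanishing of $\pi_1(\L_{D/R}\otimes_D N)$ for \emph{every} discrete $D$-module $N$, not only for $N=D$. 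For a concrete illustration, $\L_{\Z[\sqrt{2}]/\Z}\simeq(\Z[\sqrt{2}]\xrightarrow{2\sqrt{2}}\Z[\sqrt{2}])$ has $\pi_1=0$ but $\pi_1(\L\otimes\mathbb{F}_2)\neq0$, so the map is not smooth. The correct argument, already used in the proof of Proposition \ref{affine-blowup-smooth}, is to test tor-amplitude after base change to $D[1/r]$ and $\bar{D}$ (equivalently, at residue fields, each of which factors through one of the two): $\L_{D/R}\otimes_DD[1/r]=\L_{D[1/r]/R[1/r]}\in\Perf_{[0,0]}$ since $D[1/r]$ is smooth over $R[1/r]$, and $\L_{D/R}\otimes_D\bar{D}=\L_{\bar{D}/\bar{R}}\in\Perf_{[0,0]}$ since $\bar{D}$ is smooth over $\bar{C}$, which is smooth over $\bar{R}$; hence $\L_{D/R}\in\Perf_{[0,0]}(D)$ and $R\to D$ is smooth. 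One can equally well run this tor-amplitude argument directly on $R\to B$, which is what the paper's terse ``it is easy to see'' amounts to.
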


\begin{proof}
    Apply Lemma \ref{lifting-lemma} to obtain a commutative diagram
    $$\begin{tikzcd}
        R\ar[rrr]\ar[d]&&&D\ar[r]\ar[d]&\Lambda\ar[d]\\
        \bar{R}\ar[r]&\bar{A}\ar[r]&\bar{C}\ar[r]&\bar{D}\ar[r]&\bar{\Lambda}
    \end{tikzcd}$$
    where $D[1/r]$ is smooth over $R[1/r]$ and $\bar{D}$ is smooth over $\bar{C}$. Using the map $\bar{A}\to\bar{D}$, we obtain another diagram
    $$\begin{tikzcd}
        D\ar[r]\ar[d]&A\otimes_RD\ar[r]\ar[ld]&\Lambda\ar[d]\\
        \bar{D}\ar[rr]&&\bar{\Lambda}
    \end{tikzcd}$$
    that falls into the assumptions of Lemma \ref{desing-lemma-0} by base change. Hence there is a smooth $D$-algebra $B$ factorizing $A\otimes_RD\to\Lambda$, such that $B[1/r]$ is smooth over $(A\otimes_RD)[1/r]$. It is easy to see that this $B$ fulfills our requirements. 
\end{proof}

We also need higher analogs of the above lemmas. 

\begin{lemma}\label{desing-lemma-d}
    Let $A$ and $\Lambda$ be $R$-algebras with a commutative diagram
    $$\begin{tikzcd}
        R\ar[r]\ar[d]&A\ar[r]\ar[ld]&\Lambda\ar[d]\\
        \bar{R}\ar[rr]&&\bar{\Lambda}
    \end{tikzcd}$$
    Suppose that $R\to A$ is $d$-connective and $(d+2)$-smooth, and that there is some $N\in\cProj(R)$ and maps $s\colon N[d+1]\otimes_RA\to\L_{A/R}$ and $t\colon\L_{A/R}\to N[d+1]\otimes_RA$ that are isomorphisms over $A[1/r]$, such that $s\circ t=r\in\Map_A(\L_{A/R},\L_{A/R})$. If $d=0$, assume that the $\K_0$ class of $\L_{A/R}$ can be lifted to $\K_0(R)$. Then there is a $(d+1)$-smooth $R$-algebra $B$ factorizing $A\to\Lambda$ with $B[1/r]=A[1/r]$. 
\end{lemma}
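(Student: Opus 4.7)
The plan is to mimic the fiendishly clever construction in Lemma \ref{desing-lemma-0}, translated to the animated setting via the quotient algebra machinery. First, apply Theorem \ref{strong-quotient}---which is applicable since $(d+2)$-smoothness implies the required $(d+2+\min\{d,1\})$-smoothness, and since for $d=0$ the statement provides the required $\K_0$ hypothesis---to present $A=R\sslash(M[d],f)$ with $M\in\Perf_{[0,1+\min\{d,1\}]}(R)$ and $f\colon M[d]\to R$. The $R$-algebra map $A\to\bar{R}$ in the given diagram together with Remark \ref{quotient-yoneda} then yields a lift $g\colon M[d]\to R$ with $rg=f$, exactly as in Lemma \ref{desing-initial-quotient}; this plays the role of the ``$g(y)=f(ry)/r$'' substitution in the proof of Lemma \ref{desing-lemma-0}.

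Next, identify $\L_{A/R}=M[d+1]\otimes_RA$ via Proposition \ref{cotangent-of-quotient}, so that the given maps $s\colon N[d+1]\otimes_RA\to\L_{A/R}$ and $t\colon\L_{A/R}\to N[d+1]\otimes_RA$, together with the homotopy witnessing $s\circ t=r$, become the higher analogs of the matrices $s$, $t$, $h$ in Lemma \ref{desing-lemma-0}. Introduce a finitely generated polynomial $R$-algebra $S$ (analogous to $R[v,w]$) and use Proposition \ref{lift-module} to lift all this data from $A$ to $S$. Then, mimicking the classical substitution $x\mapsto p^2(v\cdot h(0)+pw)$, construct a new perfect complex $M'$ built from $N$, $M$, and the lifted data, along with a map $c\colon M'[d]\to S$ that is ``$r$-divisibly'' related to the base change of $f$. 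Set $B=S\sslash(M'[d],c)$. One should then verify the three required properties: the $(d+1)$-smoothness of $R\to B$ follows from Proposition \ref{cotangent-of-quotient} and the tor-amplitude of $M'$; the factorization $A\to B\to\Lambda$ follows from Remark \ref{quotient-yoneda} and the relation to $f$; and $B[1/r]=A[1/r]$ follows from the $r$-divisibility of the construction, in the same spirit as Remark \ref{outside-exceptional}.

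The hard part will be executing this animated analog of the explicit substitution in Lemma \ref{desing-lemma-0}. One must lower the tor-amplitude of $M$ by one (using the $s$, $t$ data as an ``approximate invertibility'' modulo $r$), maintain the $r$-divisibility that yields $B[1/r]=A[1/r]$, and keep the construction functorial enough to produce the map $B\to\Lambda$, all simultaneously. As the paper warns at the start of the subsection, ``animated rings do not help that much'' here: the clever classical construction has to be reproduced essentially verbatim, now with care about higher homotopies---particularly the one witnessing $s\circ t=r$---rather than strict equalities of matrices.
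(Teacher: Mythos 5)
Your plan diverges substantially from the paper's actual proof, and in a way that I think cannot be repaired: mimicking the polynomial-substitution construction of Lemma~\ref{desing-lemma-0} cannot produce the required conclusion here. Note two structural mismatches. First, the conclusion of the present lemma is the \emph{equality} $B[1/r]=A[1/r]$, whereas the argument of Lemma~\ref{desing-lemma-0} builds $B=S\sslash c$ from a larger polynomial ring $S=R[v,w]$ and only delivers $B[1/r]$ \emph{smooth} over $A[1/r]$ (indeed $S[1/p]=P[1/p][v]$ has extra free variables). A substitution-style construction that introduces new polynomial variables is therefore incompatible with the strict equality demanded here. Second, the hypotheses are genuinely different: Lemma~\ref{desing-lemma-0} has a $p$ with $p^4=r$ and $s\circ t=p$, and the exponents $p^2$, $p^3$, $p^4$ in the substitution $x\mapsto p^2(v\cdot h(0)+pw)$ are essential; the present lemma only has $s\circ t=r$. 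There is no $p$ to allocate, so the bookkeeping of the classical trick does not transport.

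The paper instead exploits the extra room afforded by $d$-connectivity, and the proof is much shorter. After applying Theorem~\ref{strong-quotient} to write $A=R\sslash M[d]$ (and observing via Proposition~\ref{cotangent-of-quotient} that $M\in\Perf_{[0,1]}(R)$), one lifts $s[-d-1]\colon N\otimes_R A\to M\otimes_R A$ to a map $N\to M$ (possible since $R\to A$ is surjective and $N\in\cProj(R)$, $M$ connective). This produces a factorization $R\to R\sslash N[d]\to A$, and by Corollary~\ref{cotangent-over-quotient} one replaces $R$ by $R\sslash N[d]$, reducing to the case $N=0$ where $r=0\in\Map_A(\L_{A/R},\L_{A/R})$, hence $\L_{A[1/r]/R[1/r]}=0$ and, after a Zariski localization, $A[1/r]=R[1/r]$ by Corollary~\ref{cotangent-connectivity}. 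Then Lemma~\ref{desing-initial-quotient} gives $\Lambda=R\sslash g$ with $rg=f$, and the key observation is that $r=0$ on $M\otimes_R A$ actually forces $r=0$ on $M$ (by looking at the $R$-module map $M\to(M\otimes_R A)\otimes_{\bar A}\bar R$, which is an isomorphism because it is so after base change along $R\to\bar R$ and along $R\to R[1/r]=A[1/r]$). Hence $f=rg\simeq 0$, so $A\to\Lambda$ factors through $R$, and one takes $B=R$ outright. Your plan never invokes $d$-connectivity and never exploits the possibility that $B$ might simply be $R$; it also underestimates the benefit of the animated setting for this particular lemma, which does not require the ``fiendishly clever'' maneuvers of the $d=0$ case.
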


\begin{proof}
    By Theorem \ref{strong-quotient}, there is an $M\in\Perf_{[0,1+\min\{d,1\}]}(R)$ and an $f\colon M[d]\to R$ such that $A=R\sslash M[d]$. By Proposition \ref{cotangent-of-quotient}, $\L_{A/R}=M[d+1]\otimes_RA$, so in fact $M\in\Perf_{[0,1]}(R)$. Lift $s[-d-1]$ to a map $N\to M$, which is possible because $N\in\cProj(R)$ and $M$ is connective. This gives a factorization $R\to R\sslash N[d]\to R\sslash M[d]$; by Corollary \ref{cotangent-over-quotient}, we can replace $R$ by $R\sslash N[d]$ and then the lemma is reduced to the case $N=0$, namely $r=0\in\Map_A(\L_{A/R},\L_{A/R})$. Now $\L_{A[1/r]/R[1/r]}=0$, so after replacing $R$ by a Zariski localization, by Corollary \ref{cotangent-connectivity} we can assume that $A[1/r]=R[1/r]$. 
    
    By Lemma \ref{desing-initial-quotient}, we see that the initial such commutative diagram is given by
    $$\begin{tikzcd}
        R\ar[r]\ar[d]&A=R\sslash f\ar[r]\ar[ld]&\Lambda=R\sslash g\ar[d]\\
        \bar{R}\ar[rr]&&\bar{\Lambda}
    \end{tikzcd}$$
    where $g\colon M[d]\to R$ satisfies $rg=f$. By the above reduction, we know that $r\colon M\otimes_RA\to M\otimes_RA$ is zero, namely that $M\otimes_RA\in\Perf(A)$ has the structure of an $\bar{A}$-module. Base change it along the map $\bar{A}\to\bar{R}$ and consider the $R$-module map $M\to(M\otimes_RA)\otimes_{\bar{A}}\bar{R}$. Note that it is an isomorphism after base change along either $R\to\bar{R}$ or $R\to R[1/r]=A[1/r]$; this implies that it is an isomorphism. Therefore $M$ acquires an $\bar{R}$-module structure, so $r=0\in\Map_R(M,M)$, and hence the map $A\to\Lambda$ factors through $R$ since $rg=f$. We just take $B=R$: trivially $R$ is $(d+1)$-smooth over itself, and we have seen that $R[1/r]=A[1/r]$.
\end{proof}

\begin{lemma}\label{ultimate-desing-d}
    Let $A$ and $\Lambda$ be $R$-algebras and $\bar{C}$ be an $\bar{R}$-algebra, with a commutative diagram
    $$\begin{tikzcd}
        R\ar[r]\ar[d]&A\ar[rr]\ar[d]&&\Lambda\ar[d]\\
        \bar{R}\ar[r]&\bar{A}\ar[r]&\bar{C}\ar[r]&\bar{\Lambda}
    \end{tikzcd}$$
    Suppose that $\bar{R}\to\bar{C}$ is $(d+2)$-smooth, and that $R\to A$ satisfies the assumptions in Lemma \ref{desing-lemma-d}. Then there is an $R$-algebra $B$ factorizing $A\to\Lambda$ with $B[1/r]=A[1/r]$, such that $\bar{B}$ factorizes $\bar{C}\to\bar{\Lambda}$ and is $(d+1)$-smooth over $\bar{C}$. In particular, if $\bar{R}\to\bar{C}$ is $(d+1)$-smooth, then so is $R\to B$. 
\end{lemma}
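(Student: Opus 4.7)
The plan is to follow the template of the proof of Lemma \ref{ultimate-desing-0}: use Lemma \ref{lifting-lemma} to promote the $\bar{R}$-algebra $\bar{C}$ to an actual $R$-algebra $D$, then apply the non-relative desingularization Lemma \ref{desing-lemma-d} after base change along $R \to D$.

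First, I apply Lemma \ref{lifting-lemma} with $d$ there replaced by $d+1$ (so that the $(d+2)$-smoothness of $\bar{C}$ over $\bar{R}$ fits the hypothesis). This produces a commutative diagram
$$\begin{tikzcd}
R \ar[rrr] \ar[d] & & & D \ar[r] \ar[d] & \Lambda \ar[d] \\
\bar{R} \ar[r] & \bar{A} \ar[r] & \bar{C} \ar[r] & \bar{D} \ar[r] & \bar{\Lambda}
\end{tikzcd}$$
in which $\bar{D}$ is $(d+1)$-smooth over $\bar{C}$. To arrange $D[1/r] = R[1/r]$ rather than merely $D[1/r]$ smooth over $R[1/r]$, I first reduce to the case that $\bar{R} \to \bar{C}$ is surjective and invoke the ``in fact'' clause of Lemma \ref{lifting-lemma}.

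Next, base change $A \to \Lambda$ along $R \to D$ to obtain
$$\begin{tikzcd}
D \ar[r] \ar[d] & A \otimes_R D \ar[r] \ar[ld] & \Lambda \ar[d] \\
\bar{D} \ar[rr] & & \bar{\Lambda}
\end{tikzcd}$$
and verify that $D \to A \otimes_R D$ satisfies the hypotheses of Lemma \ref{desing-lemma-d}: $d$-connectivity and $(d+2)$-smoothness are stable under base change by Remark \ref{property-smooth}; the module $N \in \cProj(R)$ base changes to $N \otimes_R D \in \cProj(D)$, with base-changed maps $s, t$ still satisfying $s \circ t = r$ on $\L_{(A \otimes_R D)/D} = \L_{A/R} \otimes_R D$; isomorphy over $A[1/r]$ yields isomorphy over $(A \otimes_R D)[1/r]$; and when $d = 0$, a lift of $[\L_{A/R}]$ to $\K_0(R)$ yields a lift of $[\L_{(A \otimes_R D)/D}]$ to $\K_0(D)$ by functoriality. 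Applying Lemma \ref{desing-lemma-d} then gives a $(d+1)$-smooth $D$-algebra $B$ factorizing $A \otimes_R D \to \Lambda$, with $B[1/r] = (A \otimes_R D)[1/r]$; combined with $D[1/r] = R[1/r]$ from the first step, this simplifies to $B[1/r] = A[1/r]$.

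For the remaining assertions: $\bar{B}$ is $(d+1)$-smooth over $\bar{D}$ by the output of Lemma \ref{desing-lemma-d}, and $\bar{D}$ is $(d+1)$-smooth over $\bar{C}$ by construction, so composing (Remark \ref{property-smooth}) gives $(d+1)$-smoothness of $\bar{C} \to \bar{B}$; the ``in particular'' clause follows from one more composition with $\bar{R} \to \bar{C}$. The main obstacle is the first-step reduction to $\bar{R} \to \bar{C}$ surjective, needed to upgrade $D[1/r]$ smooth over $R[1/r]$ to the equality $D[1/r] = R[1/r]$; as in the proof of Lemma \ref{lifting-lemma}, this amounts to replacing $R$ by an appropriate finitely generated polynomial $R$-algebra, but one must check that the $d$-connectivity of $R \to A$ and the data $(N, s, t)$ survive this enlargement by arranging the extra polynomial generators to act compatibly through $A$ and $\Lambda$.
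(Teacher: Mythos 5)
Your plan matches the paper's proof: apply Lemma \ref{lifting-lemma} to produce $D$, then base change $A\to\Lambda$ along $R\to D$ and apply Lemma \ref{desing-lemma-d}. The parameter bookkeeping and the verification that the hypotheses of Lemma \ref{desing-lemma-d} pass through base change are all as in the paper.

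The ``main obstacle'' you flag at the end is, however, real and is not overcome by the reduction you propose. Lemma \ref{lifting-lemma} only produces $D$ with $D[1/r]=R[1/r]$ under the additional hypothesis that $\bar R\to\bar C$ is surjective; in general it gives $D[1/r]$ polynomial (hence smooth) over $R[1/r]$, because its own proof first enlarges $R$ to a polynomial $R$-algebra $R'$ whose reduction surjects onto $\bar C$. Your idea of performing that enlargement \emph{before} invoking Lemma \ref{lifting-lemma} runs into the obstruction you yourself name: the new polynomial generators are lifted along $\Lambda\to\bar\Lambda$, but there is no reason they lift along $A\to\bar C$ (the map $\pi_0(\bar A)\to\pi_0(\bar C)$ need not be surjective), so one does not obtain a compatible map $R'\to A$. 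The only clean workaround — replace $A$ by $A\otimes_R R'$, which does preserve every hypothesis of Lemma \ref{desing-lemma-d} by base change — again lands you with $B[1/r]=(A\otimes_R R')[1/r]$, a polynomial $A[1/r]$-algebra rather than $A[1/r]$ itself. In short, the conclusion ``$B[1/r]=A[1/r]$'' as literally stated cannot be obtained this way; what the argument actually yields is ``$B[1/r]$ smooth (indeed polynomial) over $A[1/r]$,'' in parallel with the wording of Lemma \ref{ultimate-desing-0}. This weaker form is exactly what the proof of Theorem \ref{main-thm} requires, since there one only needs $S^{-1}B$ smooth over $S^{-1}A$. So the right resolution is not to try harder to achieve the surjectivity reduction, but to relax the conclusion of the lemma to the smooth-over form and run the argument exactly as you (and the paper) otherwise do.
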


\begin{proof}
    Apply Lemma \ref{lifting-lemma} to obtain a commutative diagram
    $$\begin{tikzcd}
        R\ar[rrr]\ar[d]&&&D\ar[r]\ar[d]&\Lambda\ar[d]\\
        \bar{R}\ar[r]&\bar{A}\ar[r]&\bar{C}\ar[r]&\bar{D}\ar[r]&\bar{\Lambda}
    \end{tikzcd}$$
    where $D[1/r]=R[1/r]$ and $\bar{D}$ is $(d+1)$-smooth over $\bar{C}$. Using the map $\bar{A}\to\bar{D}$, we obtain another diagram
    $$\begin{tikzcd}
        D\ar[r]\ar[d]&A\otimes_RD\ar[r]\ar[ld]&\Lambda\ar[d]\\
        \bar{D}\ar[rr]&&\bar{\Lambda}
    \end{tikzcd}$$
    that falls into the assumptions of Lemma \ref{desing-lemma-d} by base change. Hence there is a $(d+1)$-smooth $D$-algebra $B$ factorizing $A\otimes_RD\to\Lambda$ with $B[1/r]=(A\otimes_RD)[1/r]$. It is easy to see that this $B$ fulfills our requirements. 
\end{proof}

\subsection{Proof of the main theorem}
With all the preparations done, we now prove Theorem \ref{main-thm}. Let notations be the same as there. To show that $\Lambda$ is ind-$d$-smooth over $R$, it suffices to show that any $R$-homomorphism $A\to\Lambda$ from any compact $R$-algebra $A$ factors through some $d$-smooth $R$-algebra $B$. 

Since $S^{-1}\Lambda$ is ind-$d$-smooth over $S^{-1}R$, there is a $d$-smooth $S^{-1}R$-algebra $C'$ factorizing $S^{-1}A\to S^{-1}\Lambda$. As both $S^{-1}A$ and $C'$ are compact $S^{-1}R$-algebras, the map $S^{-1}A\to C'$ is compact. By Lemma \ref{cellular-replacement} and Corollary \ref{localization-extending}, there is a compact $A$-algebra $D$, such that $S^{-1}A\to S^{-1}D$ factors through $C'$ with $C'\to S^{-1}D$ smooth admitting a retract. Using this retract we see that $S^{-1}A\to S^{-1}\Lambda$ factors through $S^{-1}D$. Replacing $A$ by $D$, we arrive at the case where $S^{-1}R\to S^{-1}A$ is $d$-smooth. 

As $R\to A$ is compact, there is a $d'\in\N$ such that it is $d'$-smooth. If $d'\le d$ then we are already done; if $d'>d$, by induction we only need to prove that $A\to\Lambda$ factors through some $(d'-1)$-smooth $R$-algebra $B$ with $S^{-1}B$ smooth over $S^{-1}A$, as then $S^{-1}B$ will remain $d$-smooth over $S^{-1}R$. Replacing $d$ by $d'-1$, we can assume that $A$ is $(d+1)$-smooth over $R$. Use Lemma \ref{cellular-replacement} again we can assume that $\L_{A/R}$ is stably free, and that $\L_{S^{-1}A/S^{-1}R}$ is free in case $d=0$. 

If $d>0$ and we have a factorization $R\to R'\to A$ where $R\to R'$ is $(d-1)$-smooth, then we can replace $R\to A\to\Lambda$ by $R'\to A\to\Lambda$ and $S$ by its image in $R'$, and by Remark \ref{property-smooth} both the assumptions in the theorem and those created along the reductions above remain true. Therefore, we can first take $R'$ to be a polynomial $R$-algebra, and then apply Theorem \ref{factorize-smooth}, to arrive at a factorization such that $R'\to A$ is $(d-1)$-connective. In other words, when $d>0$, we can further assume that $R\to A$ is $(d-1)$-connective. Note that along the way we can let $\L_{A/R}$ remain stably free and make $\L_{S^{-1}A/S^{-1}R}[-d]$ free. 

At this point, we can take a finite free $A$-module $N$ with $S^{-1}N[d]=S^{-1}\L_{A/R}$, and eliminate denominators to get an element $p\in S$ and maps $s\colon N[d]\to\L_{A/R}$ and $t\colon\L_{A/R}\to N[d]$ with $s\circ t=p\in\Map_A(\L_{A/R},\L_{A/R})$. Let $r=p^4$ and set $\bar{R}=R/r$. By assumption $\bar{R}\to\bar{\Lambda}$ is ind-$d$-smooth, so there is a $d$-smooth $\bar{R}$-algebra $\bar{C}$ with a factorization $\bar{A}\to\bar{C}\to\bar{\Lambda}$. Now we are in the situation of either Lemma \ref{ultimate-desing-0} or Lemma \ref{ultimate-desing-d} where the ``in particular'' assumption holds, so we win. 
\qed

\printbibliography

\end{document}